\crefname{hypothesis}{Hypothesis}{Hypotheses}
\title{An Optimally Convergent Parallel Splitting Algorithm for the Multiple-Network Poroelasticity Model
\thanks{Submitted to the editors.
}}
\author{Jijing Zhao\thanks{School of Mathematical Sciences, Tongji University, Shanghai, China 
		(\email{jijingzhao@tongji.edu.cn}).}
	\and Huangxin Chen\thanks{School of Mathematical Sciences and Fujian Provincial Key Laboratory on Mathematical Modeling and High Performance Scientific Computing, Xiamen University, Xiamen, Fujian, China (\email{chx@xmu.edu.cn}).}
	\and Mingchao Cai\thanks{Department of Mathematics, Morgan State University, Baltimore, MD 21251, USA (\email{mingchao.cai@morgan.edu}).}
	\and Shuyu Sun\thanks{School of Mathematical Sciences, Tongji University, Shanghai, China; China University of Petroleum (East China), Qingdao, Shandong, China 
		(\email{suns@tongji.edu.cn}). Corresponding author.} 
	}
\begin{document}
\small
\maketitle

\begin{abstract}
This paper presents a novel parallel splitting algorithm for solving quasi-static multiple-network poroelasticity (MPET) equations. By introducing a total pressure variable, the MPET system can be reformulated into a coupled Stokes-parabolic system. To efficiently solve this system, we propose a parallel splitting approach. In the first time step, a monolithic solver is used to solve all variables simultaneously. For subsequent time steps, the system is split into a Stokes subproblem and a parabolic subproblem. These subproblems are then solved in parallel using a stabilization technique. 
This parallel splitting approach differs from sequential or iterative decoupling, significantly reducing computational time. The algorithm is proven to be unconditionally stable, optimally convergent, and robust across various parameter settings. These theoretical results are confirmed by numerical experiments. We also apply this parallel algorithm to simulate fluid-tissue interactions within the physiological environment of the human brain.
\end{abstract}

\begin{keywords}
			 Multiple-network poroelasticity, Biot model, unconditionally energy stable, decoupled algorithm.
\end{keywords}

\begin{MSCcodes}
65N30, 65M12, 65M60
\end{MSCcodes}

\section{Introduction}
	\label{intro}
This paper addresses the quasi-static multiple-network poroelasticity (MPET) equations \cite{meptori,lee_mixed_2019}, a generalization of the Biot model \cite{BM55, BM72}, which has widespread applications in both geological and biological fields \cite{WU2022}. Let \(\Omega \subseteq \mathbb{R}^d\) with \(d = 2,3\) denote a bounded polygonal domain with boundary \(\partial \Omega\), and let \(\Omega_T = \Omega \times [0, T]\) represent the unsteady domain over the time interval \([0, T]\). Given a specified number of networks \(A \in \mathbb{N}\), our objective is to find the displacement \(\boldsymbol{u}\) and pressures \(p_j\) for \(j = 1, \ldots, A\) that satisfy the governing equations
\begin{equation} \label{govern1}
	\begin{aligned}
		-\nabla \cdot \boldsymbol{\sigma}(\boldsymbol{u}) + \sum_{j=1}^{A} \alpha_j \nabla p_j = \boldsymbol{f} & \quad \text{in } \Omega_T, \\
		\partial_t(c_j p_j + \alpha_j \nabla \cdot \boldsymbol{u}) + \nabla \cdot (-{\boldsymbol{K}_j}\nabla p_j ) + S_j= q_j& \quad \text{in } \Omega_T, \quad 1 \leqslant j \leqslant A,
	\end{aligned}
\end{equation}
where the isotropic stiffness tensor
\begin{equation*}
	\begin{aligned}
		\boldsymbol{\sigma}(\boldsymbol{u}) := 2 \mu {\varepsilon}(\boldsymbol{u}) + \lambda \nabla \cdot \boldsymbol{u} \boldsymbol{I} \quad \mbox{with} \quad
         {\varepsilon}(\boldsymbol{u}) := \frac{1}{2}(\nabla \boldsymbol{u} + (\nabla \boldsymbol{u})^T).
	\end{aligned}
\end{equation*}
Here, $\mu$ and $\lambda$ are the standard nonnegative Lam\'{e} parameters, and $\boldsymbol{I}$ represents the identity tensor. The transfer terms \( S_j \) represent fluid exchange from network \( j \) to other fluid networks, based on the pressure differentials between networks. Specifically, \( S_j \) is expressed as
\begin{equation}\label{def_S}
	S_j = S_j\left(p_1, \ldots, p_A\right) = \sum_{i=1}^A s_{j \leftarrow i} \left(p_j - p_i\right),
\end{equation}
where $s_{j \leftarrow i}$ denotes the nonnegative transfer coefficients for $i, j = 1, \ldots, A$. The transfer coefficients \( s_{j \leftarrow i} \) are symmetric \( s_{j \leftarrow i} = s_{i \leftarrow j} \), although they can take on arbitrary values. Additionally, each network \( j \) is associated with a storage coefficient \( c_j \geq 0 \), a Biot-Willis coefficient \( \alpha_j \in (0, 1] \), and a hydraulic conductivity tensor \( \boldsymbol{K}_j = \kappa_j \boldsymbol{I} \). The (effective) stress tensor is \( \boldsymbol{\sigma}(\boldsymbol{u}) \), while the total stress tensor is denoted by \( \hat{\boldsymbol{\sigma}} (\boldsymbol{u}, p) := \boldsymbol{\sigma}(\boldsymbol{u}) - \sum_{j=1}^{A} \alpha_j p_j \boldsymbol{I} \). When \( A = 1 \), the MPET reduces to the well-known quasi-static Biot's equations \cite{BM55, BM72, AZ84}. 
The case where \( A = 2 \) is the Barenblatt-Biot model \cite{a2biot}. 
Thus, the MPET equations can be seen as a generalization of Biot's equations. 
The numerical discretization of MPET faces similar challenges, inheriting the Poisson locking and pressure oscillation phenomena from single-network poroelasticity \cite{yi_study_2017}. 
Moreover, due to the increased number of parameters and unknowns in MPET, more efficient and stable numerical discretization methods warrant further exploration.


 
The research on Biot's model is extensive and covers a variety of numerical solution techniques \cite{phi09,yi_study_2017}. 
These numerical methods can be classified into fully coupled algorithms and decoupled algorithms.
The fully coupled algorithm has the advantage of energy stability, but its computational cost is high, especially for large coupled systems like MPET. Moreover, the coupled algorithm is not practical for existing solvers.
To address this, researchers have explored decoupled algorithms, but stability issues remain a critical challenge. The decoupled methods for poroelasticity are sensitive to time step and material parameters \cite{   NMPDE,altmann,cai23jsc,FX}, which can affect stability and convergence. These decoupled methods are generally classified into iterative and sequential approaches. Iterative methods, such as drained, undrained, fixed-strain, and fixed-stress splits \cite{kim1, kim2, MW2013,gu23}, decouple the system at each time step and iterate until a solution is reached. {These methods, especially the fixed-stress splitting scheme, can be highly efficient when the optimal stabilization factor is used—this factor depends on the mechanical properties of the porous medium and the coupling coefficient \cite{Storvik2019}. Moreover, in \cite{BORREGALES20191466}, a partially parallel-in-time version of the fixed-stress method is proposed, treating time as an additional direction for parallelization. This approach can also be extended to the MPET framework.}
Sequential decoupled methods rely on numerical solutions from previous time steps to decouple terms, and such methods do not require iteration. 
For the two-field Biot model, sequential methods enforce constraints on parameters or time steps. Optimal convergence is achieved under weak coupling conditions, which are dependent on the parameters, and the optimal convergence order is proven by constructing a delay system \cite{semi-explicit_2021}.
The numerical schemes do not maintain energy stability if the weak-coupling conditions are not satisfied. To enhance stability, stabilizing terms are added for sequential methods, ensuring unconditional stability \cite{jscse,CMAse}.

Despite progress in decoupled techniques for the Biot model, their application to the quasi-static multiple-network poroelasticity has been limited. Lee \cite{lee2019} explored partitioned numerical methods for MPET equations using sequential subproblem-solving techniques. Preconditioners \cite{precon} and iterative methods \cite{Hongqingguo,altmann} have also been applied to these equations. However, all the aforementioned methods involve iterative or sequential decoupling. 
Efficient and stable parallel splitting methods \cite{zhao2025biot}, which enable the parallel computation of subproblems for the MPET model, remain an open area for further investigation.

This paper introduces a parallel splitting algorithm for the MPET model, offering an efficient alternative to traditional iterative and sequential decoupled methods. 
In the first time step, all variables are solved simultaneously. From the second time step onward, the system is split into two subsystems: a generalized Stokes problem and a stabilized parabolic problem, which are solved in parallel at each time level.
The proposed algorithm offers several advantages. First, it is unconditionally stable, with stability unaffected by time-step size or model parameters, enabling robust performance under strong coupling or relatively large time steps. Second, it supports parallel computation, significantly reducing computational time compared to fully coupled, iterative, or sequential methods, while allowing the use of existing solvers for the subsystems. Third, the optimal convergence results have been proven for both time and spatial discretizations, with robustness to all parameters. Notably, the method is locking-free, with error coefficients unaffected by large Lam\'{e} constants or vanishing fluid storage coefficients and permeability. 

The paper is organized as follows. Section 2 introduces the mixed formulation of MPET equations and establishes an energy stability. Section 3 presents the unconditionally energy-stable parallel splitting scheme and provides an error analysis. Section 4 includes numerical examples to demonstrate the effectiveness of the parallel method, along with brain simulation experiments based on a four-pressure-network model. Finally, Section 5 concludes the paper.

	\section{Mixed formulation for MPET and its energy law}\label{equation}
To close the system \eqref{govern1}, suitable boundary and initial conditions must be prescribed in this paper.
	\begin{align}
		\hat{\boldsymbol{\sigma}}(\boldsymbol{u}, p) \boldsymbol{n}  &= \boldsymbol{f}_1 \quad \text{on } \Gamma_t := \partial\Omega_{t} \times (0, T),\notag \\
		\boldsymbol{u}&= 0 \quad \text{on } \Gamma_{\bm{u}} := \partial\Omega_{u} \times (0, T),\label{bou}\\
		\boldsymbol{K}_j \nabla p_j \cdot \boldsymbol{n}  &= g_j  \quad \text{on } \Gamma_N := \partial\Omega_{N} \times (0, T),\notag  \\
		p_j &= 0  \quad \text{on } \Gamma_D := \partial\Omega_{D} \times (0, T),\notag 
	\end{align}
where $\boldsymbol{n}$ is the unit outward normal to the boundary, $1 \leqslant j \leqslant A.$ $  \partial\Omega_{t} \cup  \partial\Omega_{u}= \partial\Omega$ and $  \partial\Omega_{N} \cup  \partial\Omega_{D}= \partial\Omega$ with $ |\Gamma_{\bm{u}}| > 0$, $ |\Gamma_D| > 0$.
The initial conditions are given by
\begin{equation}\label{ini}
	\boldsymbol{u}(x,0) = \boldsymbol{u}_0(x), \quad p_j(x,0) = p_{0,j}(x) \quad \text{in } \Omega .
\end{equation}	
In some engineering literature, the second Lam\'e constant $\mu$ is also called the shear modulus and denoted
by $G$, and $B=\lambda + \frac{2}{3}G$ is called the bulk modulus. \( \lambda \), \( \mu \) and $B$ are computed from the Young's modulus \( E \) and the Poisson ratio \( \nu \) by the following formulas
\begin{equation*}
	\lambda = \frac{E \nu}{(1 + \nu)(1 - 2\nu)}, \quad \mu = G = \frac{E}{2(1 + \nu)}, \quad  B = \frac{E}{3(1 - 2\nu)} .
\end{equation*}

For $A$ networks, we denote $\boldsymbol{\alpha}=(\alpha_1,\alpha_2,\cdots,\alpha_A)$, $\boldsymbol{p} = (p_1,p_2,\cdots,p_A )$, and $\boldsymbol{\alpha} \cdot \boldsymbol{p}=\sum_{j=1}^{A}\alpha_jp_j$. Following \cite{lee_mixed_2019, precon}, we introduce the total pressure 
$\xi = \boldsymbol{\alpha} \cdot \boldsymbol{p}-\lambda \nabla \cdot \boldsymbol{u}.$
Inserting the total pressure $\xi$ and its time-derivative into \eqref{govern1}, we obtain an augmented system of quasi-static multiple-network poroelasticity equations: for $t \in (0, T ]$, find the displacement vector field $\boldsymbol{u}$ and the pressure scalar fields $\xi$ and $\boldsymbol{p}$ such that
\begin{equation}
	\begin{aligned}\label{3f}
		-\nabla \cdot ( 2 \mu  \varepsilon (\boldsymbol{u})  -  \xi \boldsymbol{I} ) &= \boldsymbol{f} ,\\
		\nabla \cdot \boldsymbol{u}+\frac{1}{\lambda} \xi -\frac{1}{\lambda} \boldsymbol{\alpha} \cdot \boldsymbol{p}&=0,\\
		c_j  \partial_t p_j + \frac{\alpha_j}{\lambda} \left( \boldsymbol{\alpha} \cdot  \partial_t\boldsymbol{p}  - \partial_t \xi \right) + \nabla \cdot \left(-{\boldsymbol{K}_j}\nabla p_j \right) +S_j&= q_j ,\quad j=1,2,\cdots, A.
	\end{aligned}
\end{equation}

After the reformulation, the boundary conditions \eqref{bou} and initial conditions \eqref{ini} with $\xi(x,0)=\boldsymbol{\alpha} \cdot \boldsymbol{p}(x,0)-\lambda \nabla \cdot \boldsymbol{u}(x,0)$ can still be applied. Equation \eqref{3f} represents the mixed formulation of the MPET equations, where the system is a coupling of a generalized Stokes system for \( (\boldsymbol{u}, \xi) \) and a parabolic system for \( \boldsymbol{p} \). Furthermore, when \( A = 1 \), this reduces to the well-known three-field Biot model \cite{siam2016}. As \( \alpha_j \to 0 \) or \( \lambda \to \infty \), the coupled equation decouples into two subsystems: a Stokes system and a parabolic system.

To study the weak form and energy analysis of the mixed formulation \eqref{3f}, we give the standard Sobolev spaces $W^{m,s}$ with integer $m,s$. We also use $H^m(\Omega)$ for $W^{m,2}(\Omega)$, and norm $\| \cdot \|_{H^m(\Omega)}$ for $\| \cdot \|_{W^{m,2}(\Omega)}$. $H^m_{0,\Gamma}(\Omega)$ is the subspace of $H^m(\Omega)$ with the vanishing trace on $\Gamma \subset \partial \Omega$.
The $L^2$ inner product is defined as $(v,w) = \int_{\Omega} vwdx$. For vectors $\boldsymbol{v}=(v_1,v_2,\cdots,v_A)$ and $\boldsymbol{w}=(w_1,w_2,\cdots,w_A)$, the $L^2$ inner product is denoted as $(\boldsymbol{v},\boldsymbol{w})=\int \sum_{i=1}^A v_i w_idx$ and the norm of the vector function is $\| \boldsymbol{v} \|_{L^2}  =(\int \sum_{i=1}^A v_i^2dx)^{1/2} $. 

We introduce the following functional spaces, $$\boldsymbol{V} =  (H^1_{0,\Gamma_{\bm{u}}}(\Omega))^d, \quad W = L^2(\Omega), \quad \boldsymbol{M}=M_1 \times M_2 \times \cdots \times M_A,$$ 
where $M_j =  H^1_{0,\Gamma_D}(\Omega)( 1\leqslant j \leqslant A).$
Assuming \( \left|\Gamma_{\bm{u}} \right| > 0 \), Korn's inequality holds in \( \boldsymbol{V} \). Specifically, there exists a constant \( C_k = C_k(\Omega, \Gamma_{\bm{u}}) > 0 \) such that
\begin{equation}\label{corn}
	\|\boldsymbol{u}\|_{H^1(\Omega)} \leq C_k \|\varepsilon(\boldsymbol{u})\|_{L^2(\Omega)}, \quad \forall \, \boldsymbol{u} \in \boldsymbol{V}.
\end{equation}
Additionally, the following inf-sup condition is satisfied: there exists a constant \( \beta_0(\Omega,\Gamma_{\bm{u}}) > 0 \), such that
\[
\sup_{\boldsymbol{u} \in \boldsymbol{V}} \frac{(\operatorname{div} \boldsymbol{u}, q)}{\|\boldsymbol{u}\|_{H^1(\Omega)}} \geq \beta_0 \|q\|_{L^2(\Omega)}, \quad \forall \, q \in L^2(\Omega).
\]

By multiplying test functions and integration by parts, we have the following variational formulation for \eqref{3f}: find $\boldsymbol{u}\in C^1([0, T];\boldsymbol{V}), \xi \in C^1([0, T];$ $W)$ and $\boldsymbol{p} \in C^1([0, T];\boldsymbol{M})$ such that
\begin{align}
	a_1(\boldsymbol{u}, \boldsymbol{v})- b \left(\boldsymbol{v}, \xi \right) &= (\boldsymbol{f}, \boldsymbol{v}) +  \langle \boldsymbol{f}_1, \boldsymbol{v}\rangle ,& \forall \boldsymbol{v} \in \boldsymbol{V},\label{weakforu1}\\
	b \left(\boldsymbol{u}, \phi \right) +a_2(\xi, \phi)  &=\left(\frac{1}{\lambda} \boldsymbol{\alpha} \cdot \boldsymbol{p}, \phi \right), & \forall \phi \in W,\label{weakforu2}\\
	a_3(\partial_t p_j, \psi_j) +   \left(\frac{\alpha_j}{\lambda} \boldsymbol{\alpha} \cdot \partial_t \boldsymbol{p}, \psi_j\right)- \left( \frac{\alpha_j}{\lambda}  \partial_t \xi ,\psi_j \right)  &+  d(p_j, \psi_j)\label{weakforu3} = (q_j, \psi_j) + \langle g_j, \psi_j \rangle ,& \forall \psi_j \in M_j, 
\end{align}
for $ 1 \leqslant j \leqslant A$. Here, the bilinear forms are
$$
\begin{aligned}
	&a_1(\boldsymbol{u}, \boldsymbol{v})=2 \mu \int_{\Omega} \varepsilon(\boldsymbol{u}): \varepsilon(\boldsymbol{v}), \quad & b(\boldsymbol{v}, \phi)=\int_{\Omega} \phi \nabla \cdot \boldsymbol{v}, \\
	&a_2(\xi, \phi)=\frac{1}{\lambda} \int_{\Omega} \xi \phi, \quad &a_3(p_j, \psi_j)=c_j \int_{\Omega} p_j \psi_j,   \\
	& d(p_j, \psi_j)=\int_{\Omega} \boldsymbol{K}_j \nabla p_j \cdot \nabla \psi_j + \int_{\Omega} S_j (\boldsymbol{p}) \psi_j.&
\end{aligned}
$$

The coupled system \eqref{weakforu1}-\eqref{weakforu3} involves complex coupling terms such as \( \frac{1}{\lambda} \left( \boldsymbol{\alpha} \cdot \boldsymbol{p}, \phi \right) \) and \( \left( \frac{\alpha_j}{\lambda} \partial_t \xi , \psi_j \right) \). Solving this coupled system is time- and memory-intensive, so an efficient splitting method that decouples the system into two subsystems is a natural solution. Before introducing the discrete scheme, we first analyze the energy stability of the continuous problem.

\begin{lemma}
	Every weak solution \( (\boldsymbol{u}, \xi, \boldsymbol{p}) \) of problems \eqref{weakforu1}-\eqref{weakforu3} satisfies the following energy law:
	\begin{align*}
		\frac{d}{dt} E(t)  + \frac{1}{2} \sum_{j=1}^A \sum_{i=1}^A \|  s_{j\leftarrow i}^{1/2} \left( p_j-p_i \right)& \|_{L^2(\Omega)}^2+\sum_{j=1}^A \| \kappa_j^{1/2} \nabla p_j\|_{L^2(\Omega)}^2 \\
	&	= \sum_{j=1}^A(q_j, p_j) + \sum_{j=1}^A \langle g_j, p_j \rangle -( \partial_t \boldsymbol{f}, \boldsymbol{u}) - \langle \partial_t \boldsymbol{f}_1, \boldsymbol{u} \rangle .
	\end{align*}
	for \( t \in [0, T] \), where
	\begin{align*}
		E(t) := &\frac{1}{2} \left[2 \mu \lVert \varepsilon(\boldsymbol{u}(t)) \rVert^2_{L^2(\Omega)} + \frac{1}{\lambda} \lVert \boldsymbol{\alpha} \cdot \boldsymbol{p}(t) -\xi(t) \rVert^2_{L^2(\Omega)} + \sum_{j=1}^Ac_j \lVert p_j(t) \rVert^2_{L^2(\Omega)} \right] \\
		&- (\boldsymbol{f}(t), \boldsymbol{u}(t)) - \langle \boldsymbol{f}_1(t), \boldsymbol{u}(t) \rangle .
	\end{align*}
	Moreover, there holds
	\begin{equation*}
		\begin{aligned}
			\| \xi \|_{L^2(\Omega)}  \leq C \left(  2\mu \|   \varepsilon ( \boldsymbol{u}) \|_{L^2(\Omega)} + \| \boldsymbol{f} \|_{L^2(\Omega)} + \| \boldsymbol{f}_1\|_{L^2(\Gamma_t)}  \right),
		\end{aligned}
	\end{equation*}
	where $C$ is a positive constant.
\end{lemma}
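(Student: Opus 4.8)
The plan is to test the three weak equations \eqref{weakforu1}--\eqref{weakforu3} with suitably chosen functions, add the resulting identities, and recognize the combination as an exact time derivative of $E(t)$ plus the stated dissipation; the bound on $\xi$ will be obtained separately from the inf-sup condition. Concretely, I would take $\boldsymbol v=\partial_t\boldsymbol u$ in \eqref{weakforu1}, differentiate \eqref{weakforu2} in time and take $\phi=\xi$ there, and take $\psi_j=p_j$ in \eqref{weakforu3} and sum over $j=1,\dots,A$. All of these operations are legitimate because $\boldsymbol u\in C^1([0,T];\boldsymbol V)$, $\xi\in C^1([0,T];W)$ and $\boldsymbol p\in C^1([0,T];\boldsymbol M)$, so in particular $\partial_t\boldsymbol u\in\boldsymbol V$ and $\partial_t\xi\in W$ are admissible test functions; the required regularity of $\boldsymbol f,\boldsymbol f_1$ is exactly what makes $\partial_t\boldsymbol f,\partial_t\boldsymbol f_1$ meaningful on the right-hand side.

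From \eqref{weakforu1} with $\boldsymbol v=\partial_t\boldsymbol u$ one obtains $\mu\frac{d}{dt}\|\varepsilon(\boldsymbol u)\|_{L^2(\Omega)}^2-b(\partial_t\boldsymbol u,\xi)=(\boldsymbol f,\partial_t\boldsymbol u)+\langle\boldsymbol f_1,\partial_t\boldsymbol u\rangle$, and from the time-differentiated \eqref{weakforu2} with $\phi=\xi$ one obtains $b(\partial_t\boldsymbol u,\xi)+\frac{1}{2\lambda}\frac{d}{dt}\|\xi\|_{L^2(\Omega)}^2=\frac1\lambda(\boldsymbol\alpha\cdot\partial_t\boldsymbol p,\xi)$. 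Adding these two cancels the indefinite term $b(\partial_t\boldsymbol u,\xi)$ and leaves a term $-\frac1\lambda(\xi,\boldsymbol\alpha\cdot\partial_t\boldsymbol p)$. Summing \eqref{weakforu3} over $j$ with $\psi_j=p_j$ and using $\sum_j\alpha_jp_j=\boldsymbol\alpha\cdot\boldsymbol p$ yields $\frac12\frac{d}{dt}\sum_jc_j\|p_j\|_{L^2(\Omega)}^2+\frac{1}{2\lambda}\frac{d}{dt}\|\boldsymbol\alpha\cdot\boldsymbol p\|_{L^2(\Omega)}^2-\frac1\lambda(\partial_t\xi,\boldsymbol\alpha\cdot\boldsymbol p)+\sum_jd(p_j,p_j)=\sum_j(q_j,p_j)+\sum_j\langle g_j,p_j\rangle$. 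The main bookkeeping point is that the three $\frac1\lambda$-terms telescope: $-\frac1\lambda(\xi,\boldsymbol\alpha\cdot\partial_t\boldsymbol p)-\frac1\lambda(\partial_t\xi,\boldsymbol\alpha\cdot\boldsymbol p)=-\frac1\lambda\frac{d}{dt}(\xi,\boldsymbol\alpha\cdot\boldsymbol p)$, so together with $\frac{1}{2\lambda}\frac{d}{dt}\|\xi\|_{L^2(\Omega)}^2$ and $\frac{1}{2\lambda}\frac{d}{dt}\|\boldsymbol\alpha\cdot\boldsymbol p\|_{L^2(\Omega)}^2$ they combine into $\frac{1}{2\lambda}\frac{d}{dt}\|\boldsymbol\alpha\cdot\boldsymbol p-\xi\|_{L^2(\Omega)}^2$ — this is precisely the effect the total-pressure reformulation was introduced to produce, and is the one step that needs care in the choice of test function (differentiating \eqref{weakforu2} in time before testing with $\xi$).

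For the dissipative contributions I would split $d(p_j,p_j)=\|\kappa_j^{1/2}\nabla p_j\|_{L^2(\Omega)}^2+\sum_{i=1}^A\int_\Omega s_{j\leftarrow i}(p_j-p_i)p_j$; after summing over $j$, the double sum is symmetrized using $s_{j\leftarrow i}=s_{i\leftarrow j}$ — interchanging $(j,i)$, averaging, and using $(p_j-p_i)p_j+(p_i-p_j)p_i=(p_j-p_i)^2$ — which gives $\frac12\sum_{j=1}^A\sum_{i=1}^A\|s_{j\leftarrow i}^{1/2}(p_j-p_i)\|_{L^2(\Omega)}^2$. On the right-hand side I would rewrite $(\boldsymbol f,\partial_t\boldsymbol u)+\langle\boldsymbol f_1,\partial_t\boldsymbol u\rangle=\frac{d}{dt}\big[(\boldsymbol f,\boldsymbol u)+\langle\boldsymbol f_1,\boldsymbol u\rangle\big]-(\partial_t\boldsymbol f,\boldsymbol u)-\langle\partial_t\boldsymbol f_1,\boldsymbol u\rangle$ and move the exact-derivative part into $E(t)$, which explains why the loads appear through their time derivatives. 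Collecting all terms gives exactly the stated energy law.

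For the bound on $\|\xi\|_{L^2(\Omega)}$, I would use \eqref{weakforu1} rewritten as $b(\boldsymbol v,\xi)=a_1(\boldsymbol u,\boldsymbol v)-(\boldsymbol f,\boldsymbol v)-\langle\boldsymbol f_1,\boldsymbol v\rangle$ for all $\boldsymbol v\in\boldsymbol V$, apply the inf-sup inequality $\beta_0\|\xi\|_{L^2(\Omega)}\le\sup_{\boldsymbol v\in\boldsymbol V}b(\boldsymbol v,\xi)/\|\boldsymbol v\|_{H^1(\Omega)}$, and estimate the three terms on the right by Cauchy--Schwarz, using $\|\varepsilon(\boldsymbol v)\|_{L^2(\Omega)}\le\|\boldsymbol v\|_{H^1(\Omega)}$ for the elastic term and a trace inequality $\|\boldsymbol v\|_{L^2(\Gamma_t)}\le C_{\mathrm{tr}}\|\boldsymbol v\|_{H^1(\Omega)}$ for the boundary term; dividing by $\beta_0$ yields the claim with $C=\max(1,C_{\mathrm{tr}})/\beta_0$. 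I do not expect a genuine obstacle here — the calculation is standard Biot-type energy analysis — the only delicate part is the telescoping of the $\frac1\lambda$-terms noted above, together with tracking the regularity that justifies the time differentiations.
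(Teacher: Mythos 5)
Your proposal is correct and follows essentially the same route as the paper's proof: the same test functions ($\boldsymbol v=\partial_t\boldsymbol u$, time-differentiated second equation tested with $\xi$, $\psi_j=p_j$), the same recombination of the $\tfrac1\lambda$-terms into $\tfrac{1}{2\lambda}\tfrac{d}{dt}\lVert\boldsymbol\alpha\cdot\boldsymbol p-\xi\rVert_{L^2(\Omega)}^2$ (you use the product rule where the paper writes the equivalent quadratic identity), the same symmetrization of the transfer terms via $s_{j\leftarrow i}=s_{i\leftarrow j}$, and the same inf-sup/trace argument for the bound on $\lVert\xi\rVert_{L^2(\Omega)}$. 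No gaps.
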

\begin{proof}
	Taking $\boldsymbol{v}= \partial_t \boldsymbol{u}$ in \eqref{weakforu1}, differentiating with respect to $t$ in \eqref{weakforu2} and setting $\phi=\xi$, and taking $\psi_j=p_j$ in \eqref{weakforu3}, we have
	\begin{align*}
		2\mu (\varepsilon(\boldsymbol{u}), \varepsilon( \partial_t \boldsymbol{u}))- \left( \xi,\nabla \cdot \partial_t \boldsymbol{u} \right) &= (\boldsymbol{f}, \partial_t \boldsymbol{u}) +  \langle \boldsymbol{f}_1, \partial_t \boldsymbol{u} \rangle ,\\
		\left(	\nabla \cdot \partial_t \boldsymbol{u}  , \xi \right) +\frac{1}{\lambda} \left( \partial_t \xi, \xi \right) -\frac{1}{\lambda}\left( \boldsymbol{\alpha} \cdot \partial_t \boldsymbol{p}, \xi \right) &=0, \\
		\left(c_j \partial_t p_j,p_j \right) +   \left(\frac{\alpha_j}{\lambda} \left( \boldsymbol{\alpha} \cdot \partial_t \boldsymbol{p}-\partial_t \xi \right),p_j \right)  +  (\boldsymbol{K}_j \nabla p_j , \nabla p_j) +\left(S_j,p_j\right)&= (q_j, p_j) + \langle g_j, {p_j} \rangle ,
	\end{align*}
	with $1 \leqslant j \leqslant A$. Summing the equations up, we have
	\begin{equation}
		\begin{aligned}\label{proflem1}
			&	 \left(2\mu \varepsilon (\boldsymbol{u}), \partial_t \varepsilon (\boldsymbol{u})\right)+
			\frac{1}{\lambda} \left( \partial_t \xi, \xi \right) -\frac{1}{\lambda}\left( \boldsymbol{\alpha} \cdot \partial_t \boldsymbol{p}, \xi \right)+ \sum_{j=1}^{A}   \left( c_j \partial_t p_j,p_j \right) + \frac{1}{\lambda}  \left( \boldsymbol{\alpha } \cdot \partial_t \boldsymbol{p}, \boldsymbol{\alpha }\cdot \boldsymbol{p} \right)  \\
			&-\frac{1}{\lambda}  \left( \partial_t \xi, \boldsymbol{\alpha} \cdot \boldsymbol{p}  \right) + \sum_{j=1}^A ( \boldsymbol{K}_j \nabla p_j , \nabla p_j) +\sum_{j=1}^{A} \left(S_j,p_j\right) - \frac{d}{dt}\left[ \left(\boldsymbol{f}, \boldsymbol{u} \right) +  \langle \boldsymbol{f}_1,  \boldsymbol{u} \rangle  \right]   \\
			= &\sum_{j=1}^A(q_j, p_j) + \sum_{j=1}^A \langle g_j, p_j \rangle -( \partial_t \boldsymbol{f}, \boldsymbol{u}) - \langle \partial_t \boldsymbol{f}_1, \boldsymbol{u} \rangle .
		\end{aligned}
	\end{equation}
	Noting the identity,$$ \left(    \boldsymbol{\alpha}  \cdot  \partial_t \boldsymbol{p} -\partial_t \xi ,\boldsymbol{\alpha }  \cdot  \boldsymbol{p} -\xi \right)= ( \boldsymbol{\alpha}  \cdot  \partial_t \boldsymbol{p},  \boldsymbol{\alpha }  \cdot  \boldsymbol{p} ) -  (\partial_t \xi , \boldsymbol{\alpha}  \cdot  \boldsymbol{p} ) -  ( \boldsymbol{\alpha } \cdot   \partial_t \boldsymbol{p},\xi)+\left( \partial_t \xi,\xi\right) .$$
	By the definition of $S_j$ in \eqref{def_S} and $s_{j\leftarrow i}=s_{i\leftarrow j}$, it follows that 
	$$  \sum_{j=1}^A\left(S_j,p_j\right)=\sum_{j=1}^A \sum_{i=1}^A \left( s_{j\leftarrow i} \left(p_j-p_i\right),p_j  \right)=\frac{1}{2} \sum_{j=1}^A \sum_{i=1}^A \|  s_{j\leftarrow i}^{1/2} \left( p_j-p_i \right) \|_{L^2(\Omega)}^2. $$
	Taking above two equations in \eqref{proflem1}, we have the desired result.
	The bound for \( \xi \) is derived using the inf-sup condition and Korn’s inequality \eqref{corn}. Specifically, from \eqref{weakforu1}, the following inequality holds:
	\begin{equation*}
		\begin{aligned}
			\beta_0 \| \xi \|_{L^2(\Omega)} & \leq \sup\limits_{\boldsymbol{v} \in \boldsymbol{V}}  \frac{| \left( \nabla \cdot \boldsymbol{v},\xi  \right)|}{\| \boldsymbol{v} \|_{H^1(\Omega)}}\\
			& =\sup\limits_{\boldsymbol{v}\in  \boldsymbol{V}  }  \frac{\left|  2\mu ( \varepsilon(\boldsymbol{u}),  \varepsilon(\boldsymbol{v})) - (\boldsymbol{f}, \boldsymbol{v}) - \langle \boldsymbol{f}_1, \boldsymbol{v} \rangle \right|}{\| \boldsymbol{v}  \|_{H^1(\Omega)}}\\
			& \leq C_1\left(  2\mu \|  \varepsilon (\boldsymbol{u}) \|_{L^2(\Omega)} + \| \boldsymbol{f} \|_{L^2(\Omega)} + \| \boldsymbol{f}_1\|_{L^2(\Gamma_t)}  \right).
		\end{aligned}
	\end{equation*}
	The constant $\beta_0$ is from the inf-sup condition and $C_1$ is from {Cauchy–Schwarz} inequality and trace inequality. This completes the proof.
\end{proof}

\section{Fully discrete parallel splitting scheme and its optimally convergent analysis}\label{sch}
In this section, we will present the parallel splitting fully discrete scheme for the Stokes-parabolic system. This algorithm differs from existing iterative decoupled and sequential decoupled methods, as the decoupled subproblems can be computed in parallel.

Let \( \mathcal{T}_h \) represent a partition of the domain \( \Omega \) into triangular elements in \( \mathbb{R}^2 \) or tetrahedral elements in \( \mathbb{R}^3 \), where \( h \) denotes the maximum element diameter in the mesh. In this manuscript, Taylor-Hood elements are used for the pair \( (\boldsymbol{u}, \xi) \), and Lagrange finite elements are employed for \( \boldsymbol{p} \). 
The finite element spaces on \( \mathcal{T}_h \) are defined as follows:
$$
\begin{aligned}
	\boldsymbol{V}_h&:=\left\{\boldsymbol{v}_h \in \boldsymbol{V} \cap C^0(\bar{\Omega}) ;\left.\boldsymbol{v}_h\right|_E \in \boldsymbol{P}_k(E), \text{ } \forall E \in \mathcal{T}_h\right\}, \\
	W_h&:=\left\{\phi_h \in W \cap C^0(\bar{\Omega});\left.\phi_h\right|_E \in P_{k-1}(E), \text{ } \forall E \in \mathcal{T}_h\right\}, \\
	M_{j,h}&:=\left\{\psi_{j,h} \in M_j\cap C^0(\bar{\Omega});\left.\psi_{j,h}\right|_E \in P_l(E), \text{ } \forall E \in \mathcal{T}_h\right\}, j=1,\cdots,A,
\end{aligned}
$$
where $k \geq 2$ and $l \geq 1$ are integers. We set $\boldsymbol{M}_h= M_{1,h} \times \cdots \times M_{A,h}$. With this choice of stable Stokes element pair, the corresponding finite element spaces satisfy the following discrete inf-sup condition. Specifically, there exists a positive constant \( \tilde{\beta} \), independent of \( h \), such that
$$
\sup_{\boldsymbol{v}_h \in \boldsymbol{V}_h} \frac{b\left(\boldsymbol{v}_h, \phi_h\right)}{\left\|\boldsymbol{v}_h\right\|_{H^1(\Omega)}} \geq \tilde{\beta}\left\|\phi_h\right\|_{L^2(\Omega)}, \quad \forall \phi_h \in W_h .
$$
We define the discrete formulation for a function $\phi$ at time $t_{n+1}$ as $\phi^{n+1}$, where $0 \leq n \leq N$ and $n$ is integer. The time step size is denoted by $\Delta t = T / N$. Additionally, let \( C \) represent a generic positive constant that remains independent of mesh and time sizes.

With reference to these element spaces, we present the fully discrete parallel splitting scheme for the quasi-static MPET model.

\textbf{Initial step}: find $\boldsymbol{u}^1_h \in \boldsymbol{V}_h$, $\xi^1_h \in W_h$ and $\boldsymbol{p}^1_h\in\boldsymbol{M}_h$ such that
\begin{align}\label{in1}
	a_1(\boldsymbol{u}^1_h, \boldsymbol{v}_h)- b \left(\boldsymbol{v}_h, \xi^1_h \right) &= (\boldsymbol{f}, \boldsymbol{v}_h) +  \langle \boldsymbol{f}_1, \boldsymbol{v}_h\rangle ,& \forall \mathbf{v}_h \in \boldsymbol{V}_h,\\
	b \left(\boldsymbol{u}^1_h, \phi_h \right) +a_2(\xi^1_h, \phi_h) \label{in2} &=\left(\frac{1}{\lambda} \boldsymbol{\alpha} \! \cdot \! \boldsymbol{p}^1_h, \phi_h \right), & \forall \phi_h \in W_h,\\
		 a_3(\frac{p_{j,h}^1 \! -\! p_{j,h}^0}{\Delta t}, \psi_{j,h}) \!+ \!  \left(\frac{\alpha_j}{\lambda} \frac{ \boldsymbol{\alpha} \! \cdot \! (\boldsymbol{p}_h^1-  \boldsymbol{p}_h^0)}{\Delta t},   \psi_{j,h}\right)& \! -\! \left( \frac{\alpha_j}{\lambda}  \frac{\xi_h^1 \! -\! \xi_h^0}{\Delta t} ,\psi_{j,h} \! \right) \notag \\ +  d(p^1_{j,h}, \psi_{j,h}) 
		&= (q_j, \psi_{j,h}) + \langle g_j, \psi_{j,h} \rangle ,& \forall \psi_j \in M_{j,h}. \label{in3}
	\end{align}

\textbf{Subsequent steps}: for $n \geqslant 1$, given $\xi^{n-1}_h , \xi^{n}_h \in W_h$, and $\boldsymbol{p}^{n-1}_h, \boldsymbol{p}^{n}_h  \in \boldsymbol{M}_h$, the following two subsystems are solved in parallel:

\textbf{Subsystem 1:}
find $\boldsymbol{u}^{n+1}_h \in \boldsymbol{V}_h$, $\xi^{n+1}_h \in W_h$ such that
\begin{align}\label{dis1}
	a_1(\boldsymbol{u}^{n+1}_h, \boldsymbol{v}_h)- b \left(\boldsymbol{v}_h, \xi^{n+1}_h \right) &= (\boldsymbol{f}, \boldsymbol{v}_h) +  \langle \boldsymbol{f}_1, \boldsymbol{v}_h\rangle ,& \forall \mathbf{v}_h \in \boldsymbol{V}_h,\\
	b \left(\boldsymbol{u}^{n+1}_h, \phi_h \right) +a_2(\xi^{n+1}_h, \phi_h)  &=\left(\frac{1}{\lambda} \boldsymbol{\alpha} \! \cdot \! \boldsymbol{p}^{n}_h, \phi_h \right), & \forall \phi_h \in W_h.\label{dis2}
\end{align}

\textbf{Subsystem 2:}
find $\boldsymbol{p}^{n+1}_h\in\boldsymbol{M}_h$ such that
	\begin{align}
		&a_3(\frac{p_{j,h}^{n+1}-p_{j,h}^{n}}{\Delta t}, \psi_{j,h}) +   \left(\frac{\alpha_j}{\lambda}  \frac{\boldsymbol{\alpha} \! \cdot \! \left( \boldsymbol{p}_h^{n+1}-\boldsymbol{p}_h^{n}\right)}{\Delta t}, \psi_{j,h}\right)\notag \\
		&+ \left( L  \alpha_j  \frac{ \boldsymbol{\alpha} \! \cdot \! \left( \boldsymbol{p}_{h}^{n+1}-2  \boldsymbol{p}_{h}^{n}+ \boldsymbol{p}_{h}^{n-1} \right) }{\Delta t} ,\psi_{j,h} \right)  
		+  d(p^{n+1}_{j,h}, \psi_{j,h})
		\label{dis3}  \\
		=& \left( \frac{\alpha_j}{\lambda}  \frac{\xi_h^{n}-\xi_h^{n-1}}{\Delta t} ,\psi_{j,h} \right) + (q_j, \psi_{j,h}) + \langle g_j, \psi_{j,h} \rangle , \quad \forall \psi_j \in M_{j,h},\notag
	\end{align}
where $L=\dfrac{\mu}{\lambda^2}$ is the coefficient of the stabilizer, independent of the time step. 

\begin{rmk}
The coefficient of stabilizer is explicitly determined. In addition, the stabilizer is a term of \( \Delta t \), arising from that \( 2\phi^{n} - \phi^{n-1} \) is the second-order time extrapolation of \( \phi^{n+1} \), divided by \( \Delta t \). This is because the proposed stabilizing method is constructed based on the time derivative of the pressure rather than the pressure itself.
\end{rmk}

\begin{rmk}
Here, a coupled approach is used at the initial step for the convenience of error analysis. From the second time step onward, the algorithm decouples the problem into two independent subsystems: a Stokes system \eqref{dis1}-\eqref{dis2} and a parabolic system \eqref{dis3}, which are solved independently at each time level, allowing parallel computation. 
\end{rmk}

\subsection{Interpolation operators}  
We construct the following interpolation operators:
\begin{equation*}
    \Pi_h^{{V}}: \boldsymbol{V} \rightarrow \boldsymbol{V}_h, \quad \Pi_h^{W}: {W} \rightarrow {W}_h,\quad \Pi_h^{M_j}: M_j \rightarrow M_{j, h}, \quad j=1, \cdots, A.
\end{equation*}
For any $\left(\boldsymbol{u}, \xi \right) \in \boldsymbol{V} \times W$, we introduce the interpolant $\left(\Pi_h^{{V}} \boldsymbol{u}, \Pi_h^{W} \xi \right) \in \boldsymbol{V}_h \times W_{h}$, which is uniquely determined as the Stokes equations:
\begin{equation}
	\begin{aligned}\label{eqs}
		\left( 2 \mu \varepsilon \! \left(\Pi_h^{{V}} \boldsymbol{u} \right), \varepsilon(\boldsymbol{v}_h)\right)-\left( \Pi_h^{W} \xi, \nabla \cdot \boldsymbol{v}_h \right) & = \left( 2 \mu \varepsilon(\boldsymbol{u}), \varepsilon(\boldsymbol{v}_h) \right) -\left( \xi, \nabla \cdot \boldsymbol{v}_h \right), & & \forall \boldsymbol{v}_h \in \boldsymbol{V}_h, \\
		\left(\nabla \cdot \Pi_h^V \boldsymbol{u}, \phi_h \right) & =\left(\nabla \cdot \boldsymbol{u}, \phi_h \right), & & \forall \phi_h \in W_h.
	\end{aligned}
\end{equation}

Next, we denote \(N\)-tuple of interpolants \cite{lee2019} as \(\Pi_h^{M} \boldsymbol{p} = \left(\Pi_h^{M_1} p_1, \ldots, \Pi_h^{M_A} p_A\right)\). The interpolation \(\Pi_h^M \boldsymbol{p}\) is defined as the solution to the corresponding elliptic system
\begin{equation}
	\begin{aligned}\label{eqfore}
		\sum_{j=1}^A\left[\left( \boldsymbol{K}_j \nabla \Pi_h^{M_j} p_j, \nabla \psi_{j} \right)+\left( S_j\left(\Pi_h^{M} \boldsymbol{p}\right), \psi_{j} \right) \right]=\sum_{j=1}^A\left[\left( \boldsymbol{K}_j \nabla p_j, \nabla \psi_{j} \right)+\left( S_j(\boldsymbol{p}), \psi_{j} \right) \right],
	\end{aligned}
\end{equation}
for $\psi_{j} \in M_{j, h}$. The well-posedness of this problem can be established based on the properties of $S_j$.


For the selected finite element spaces and projection operators, the following error estimate holds for the Stokes-type interpolant defined in \eqref{eqs}. If $\boldsymbol{u} \in \boldsymbol{H}_{0, \Gamma_{\bm{u}}}^{k+1}(\Omega)$ and $\xi \in H^k(\Omega)$, then
\begin{equation*}
\left\|\boldsymbol{u}-\Pi_h^V \boldsymbol{u} \right\|_{H^1(\Omega)}+\left\| \xi - \Pi_h^{W} \xi \right\|_{L^2(\Omega)} \leqslant C h^k\left(\|\boldsymbol{u}\|_{H^{k+1}(\Omega)}+\left\| \xi \right\|_{H^k(\Omega)}\right).
\end{equation*}
Furthermore, the following error estimate holds for the elliptic interpolants defined by \eqref{eqfore}. Assuming that \( p_j \) for \( \boldsymbol{p} = (p_1, p_2, \cdots, p_A) \) are sufficiently regular, then
\begin{equation*}
	\left\|\boldsymbol{p}-\Pi_h^{M} \boldsymbol{p}\right\|_{H^1(\Omega)} \leqslant h^l\left\|\boldsymbol{p}\right\|_{H^{l+1}(\Omega)}, \quad 
	\left\|\boldsymbol{p}-\Pi_h^{M} \boldsymbol{p} \right\|_{L^2(\Omega)} \leqslant Ch^{l+1}\left\|\boldsymbol{p}\right\|_{H^{l+1}(\Omega)}.
\end{equation*}


In the next subsection, we show optimal error estimates of the fully discrete parallel scheme.
\subsection{Error estimate}
\textbf{\textbf{Assumption 1}.}\label{assump1} Assume that $\boldsymbol{u} \in L^{\infty}\left(0, T ; \boldsymbol{H}_{0, \Gamma_{\bm{u}}}^{k+1}(\Omega)\right),$\\  $\partial_t \boldsymbol{u} \in 
 L^2\left(0, T ; \boldsymbol{H}_{0, \Gamma_{\bm{u}}}^{k+1}(\Omega)\right),$ 
 $\partial_{t t} \boldsymbol{u} \in L^2\left(0, T ; \boldsymbol{H}_{0, \Gamma_{\bm{u}}}^1(\Omega)\right), \xi \in L^{\infty}\left(0, T ; H^k(\Omega)\right),$ $\partial_t \xi \in L^2\left(0, T ; H^k(\Omega)\right),$ $\partial_{t t} \xi \in L^2  \left(0, T ;  L^2(\Omega)\right), \boldsymbol{p} \in L^{\infty}\left(0, T ; H_{0, \Gamma_D}^{l+1}(\Omega)\right),$ $\partial_t \boldsymbol{p} \in L^2\left(0, T ; H_{0, \Gamma_D}^{l+1}(\Omega)\right), \partial_{t t} \boldsymbol{p} \in L^2\left(0, T ; L^2(\Omega)\right)$.

Setting 
\begin{equation*}
		\begin{aligned}
		&e_{\boldsymbol{u}}^{n+1}=\boldsymbol{u}^{n+1}-\boldsymbol{u}_h^{n+1}, \quad e^{h,n+1}_{\boldsymbol{u}}=\Pi_h^V \boldsymbol{u}  -\boldsymbol{u}_h^{n+1},\\
		&e_\xi^{n+1}=\xi^{n+1}-\xi_h^{n+1},\quad e^{h,n+1}_\xi=\Pi_h^W \xi - \xi_h^{n+1},\\	&e_{p_j}^{n+1}=p_j^{n+1}-p_{j,h}^{n+1},\quad e^{h,n+1}_{p_j}= \Pi_h^{M_j} p_j-p_{j,h}^{n+1},
	\end{aligned}
\end{equation*}
where $j=1,\cdots,A$ with vector $\boldsymbol{e}^{n+1}_{p}=(e^{n+1}_{p_1},\cdots,e^{n+1}_{p_A})  $. We also need to set 
\begin{equation*}
D_{\boldsymbol{u}}^{n+1} =  e^{h,n+1}_{\boldsymbol{u}}-e^{h,n}_{\boldsymbol{u}}, \quad  D_{{\xi}}^{n+1} =  e^{h,n+1}_\xi-e^{h,n}_\xi,\quad  D_{p_j}^{n+1} =  e^{h,n+1}_{p_j}-e^{h,n}_{p_j},
\end{equation*}  
and set  $\boldsymbol{e}^{h,n+1}_{p}=(e^{h,n+1}_{p_1},\cdots,e^{h,n+1}_{p_A})  $, $  \boldsymbol{D}^{n+1}_{p}=(D^{n+1}_{p_1},\cdots,D^{n+1}_{p_A})  $ with
\begin{align*}
	\boldsymbol{\alpha}\! \cdot \! \boldsymbol{e}^{h,n+1}_{p} =\sum_{j=1}^A \alpha_j e^{h,n+1}_{p_j},\quad \boldsymbol{\alpha} \! \cdot \! \boldsymbol{D}^{n+1}_{p} =\sum_{j=1}^A \alpha_j D^{n+1}_{p_j}.
\end{align*}   

\begin{theorem}\label{thm1}
	Under \textbf{Assumption 1}, let \((\boldsymbol{u}, \xi, \boldsymbol{p})\) denote the solutions of equations \eqref{weakforu1}-\eqref{weakforu3}. For \( 1 \leqslant n \leqslant N \), let \((\boldsymbol{u}_h^{n+1}, \xi_h^{n+1}, \boldsymbol{p}_h^{n+1})\) represent the solutions of the discrete schemes \eqref{dis1}-\eqref{dis3}. If \( L \geqslant \frac{16 \mu  }{\lambda^2 \tilde{\beta}^2 } \), where $ \tilde{\beta}$ from the inf-sup condition, we obtain the following estimate:
	\begin{equation}\label{th1eq}
		\begin{aligned}
			& \sum_{n=1}^N\left[ \mu\left\|\varepsilon \! \left(D_{\boldsymbol{u}}^{n+1}\right)\right\|_{L^2(\Omega)}^2+ \sum_{j=1}^A \frac{c_j}{2}\left\| D_{p_j}^{n+1} \right\|^2_{L^2(\Omega)}+\frac{1}{4 \lambda}\left\| \boldsymbol{\alpha} \! \cdot \! \boldsymbol{D}_p^{n+1}-D_{\xi}^{n+1}\right\|_{L^2(\Omega)}^2\right] \\
			&  +\Delta t \sum_{j=1}^A   \frac{ \kappa_j }{2}  \left\|  \nabla e_{p_j}^{h,N+1} \right\|^2_{L^2(\Omega)} +\Delta t  \sum_{i,j=1}^A  \frac{s_{j\leftarrow i}}{4}    \left\|  e^{h,N+1}_{p_j}-e_{p_i}^{h,N+1}  \right\|_{L^2(\Omega)}^2       \\
			\leqslant & {C}\left[(\Delta t)^3 \int_0^T\left(\left\|\partial_{t t} \boldsymbol{u}\right\|_{H^1(\Omega)}^2+\left\|\partial_{t t} \xi\right\|_{L^2(\Omega)}^2+\left\|\partial_{t t} \boldsymbol{p}\right\|_{L^2(\Omega)}^2\right) d s\right. \\
			& +h^{2 k} \Delta t \int_0^T\left(\left\|\partial_t \boldsymbol{u}\right\|_{H^{k+1}(\Omega)}^2+\left\|\partial_t \xi\right\|_{H^k(\Omega)}^2\right) d s  \left.+h^{2 l+2} \Delta t \int_0^T\left\|\partial_t \boldsymbol{p}\right\|_{H^{l+1}(\Omega)}^2 d s\right].
		\end{aligned}
	\end{equation}
\end{theorem}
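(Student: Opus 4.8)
\emph{Overview.} The plan is to pass to error equations, split each error via the two tailored interpolants \eqref{eqs} and \eqref{eqfore}, test the resulting equations with the backward differences \(D_{\boldsymbol u}^{n+1},\,D_\xi^{n+1},\,D_{p_j}^{n+1}\), sum over \(n\) and telescope; the stabilizer is exactly what lets the explicitly lagged coupling be absorbed. First I would subtract \eqref{dis1}--\eqref{dis3} from \eqref{weakforu1}--\eqref{weakforu3} at \(t_{n+1}\). Because \((\Pi_h^V,\Pi_h^W)\) is the discrete Stokes interpolant, the \(a_1\)- and \(b\)-residuals cancel identically, so the momentum error identity is \(a_1(e_{\boldsymbol u}^{h,n+1},\boldsymbol v_h)-b(\boldsymbol v_h,e_\xi^{h,n+1})=0\); because \(\Pi_h^M\) kills the \(d\)-residual, the diffusion/transfer part of the parabolic error equation sees only \(e_{p_j}^{h,n+1}\). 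Time-differencing the two Stokes identities gives \(a_1(D_{\boldsymbol u}^{n+1},\boldsymbol v_h)-b(\boldsymbol v_h,D_\xi^{n+1})=0\) and \(b(D_{\boldsymbol u}^{n+1},\phi_h)+a_2(D_\xi^{n+1},\phi_h)=\frac1\lambda(\boldsymbol\alpha\!\cdot\!\boldsymbol D_p^{n},\phi_h)+(\text{interp.},\phi_h)\); note that the explicit \(\boldsymbol p_h^n\) of \eqref{dis2} yields the lagged \(\boldsymbol\alpha\!\cdot\!\boldsymbol D_p^{n}\) rather than \(\boldsymbol\alpha\!\cdot\!\boldsymbol D_p^{n+1}\). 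In the parabolic error equation I would write \(\partial_t\) as a backward difference with the Taylor remainder in integral form \(\int_{t_n}^{t_{n+1}}(s-t_n)\partial_{tt}(\cdot)\,ds\); the explicit \(\frac{\xi_h^{n}-\xi_h^{n-1}}{\Delta t}\) produces, after multiplication by \(\Delta t\), the second lagged term \(-\frac1\lambda D_\xi^{n}\) instead of \(-\frac1\lambda D_\xi^{n+1}\).

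\emph{Testing and assembly.} Taking \(\boldsymbol v_h=D_{\boldsymbol u}^{n+1}\) in the differenced momentum identity gives \(b(D_{\boldsymbol u}^{n+1},D_\xi^{n+1})=2\mu\|\varepsilon(D_{\boldsymbol u}^{n+1})\|_{L^2}^2\); inserting this into the differenced mass equation tested with \(\phi_h=D_\xi^{n+1}\) yields \(2\mu\|\varepsilon(D_{\boldsymbol u}^{n+1})\|_{L^2}^2+\frac1\lambda\|D_\xi^{n+1}\|_{L^2}^2-\frac1\lambda(\boldsymbol\alpha\!\cdot\!\boldsymbol D_p^{n},D_\xi^{n+1})=(\text{interp./trunc.},D_\xi^{n+1})\). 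In the \(\Delta t\)-scaled parabolic error equation I would test with \(\psi_j=D_{p_j}^{n+1}\) and sum over \(j\): the storage term gives \(\sum_jc_j\|D_{p_j}^{n+1}\|_{L^2}^2\), the \(\frac{\alpha_j}{\lambda}\boldsymbol\alpha\!\cdot\!\) term gives \(\frac1\lambda\|\boldsymbol\alpha\!\cdot\!\boldsymbol D_p^{n+1}\|_{L^2}^2\), the stabilizer gives (by \(2(a-b,a)=\|a\|^2-\|b\|^2+\|a-b\|^2\)) \(\frac L2(\|\boldsymbol\alpha\!\cdot\!\boldsymbol D_p^{n+1}\|^2-\|\boldsymbol\alpha\!\cdot\!\boldsymbol D_p^{n}\|^2+\|\boldsymbol\alpha\!\cdot\!(\boldsymbol D_p^{n+1}-\boldsymbol D_p^{n})\|^2)\), and \(d(\cdot,\cdot)\) gives (using the symmetrization \(s_{j\leftarrow i}=s_{i\leftarrow j}\) from the proof of the energy law) \(\frac{\Delta t}{2}(\mathcal D(\boldsymbol e_p^{h,n+1})-\mathcal D(\boldsymbol e_p^{h,n})+\mathcal D(\boldsymbol D_p^{n+1}))\), with \(\mathcal D(\boldsymbol q):=\sum_j\kappa_j\|\nabla q_j\|^2+\frac12\sum_{i,j}s_{j\leftarrow i}\|q_j-q_i\|^2\ge0\). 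Adding and summing over \(n=1,\dots,N\): the diffusion and stabilizer sums telescope to the final-time terms of \eqref{th1eq} (plus harmless terms at \(n=1\)); the diagonal \(\frac1\lambda(\|D_\xi^{n+1}\|^2+\|\boldsymbol\alpha\!\cdot\!\boldsymbol D_p^{n+1}\|^2)\) together with the two coupling terms, after writing \(\boldsymbol\alpha\!\cdot\!\boldsymbol D_p^{n}=\boldsymbol\alpha\!\cdot\!\boldsymbol D_p^{n+1}-\boldsymbol\alpha\!\cdot\!(\boldsymbol D_p^{n+1}-\boldsymbol D_p^{n})\) and \(D_\xi^{n}=D_\xi^{n+1}-(D_\xi^{n+1}-D_\xi^{n})\) and using \(\|a\|^2+\|b\|^2-2(a,b)=\|a-b\|^2\), reorganize into \(\frac1{\lambda}\|\boldsymbol\alpha\!\cdot\!\boldsymbol D_p^{n+1}-D_\xi^{n+1}\|^2\) (of which \(\frac14\) is retained on the left as in \eqref{th1eq}) plus \emph{delay} cross-terms \(-\frac1\lambda(\boldsymbol\alpha\!\cdot\!(\boldsymbol D_p^{n+1}-\boldsymbol D_p^{n}),D_\xi^{n+1})\) and \(-\frac1\lambda(D_\xi^{n+1}-D_\xi^{n},\boldsymbol\alpha\!\cdot\!\boldsymbol D_p^{n+1})\).

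\emph{The crux: absorbing the delay by the stabilizer.} This is where the hypothesis on \(L\) is used and where I expect the main obstacle. After one discrete summation by parts the delay cross-terms reduce to sums of the type \(\frac1\lambda(D_\xi^{m},\boldsymbol\alpha\!\cdot\!(\boldsymbol D_p^{m+1}-\boldsymbol D_p^{m}))\) plus a single end contribution \(\frac1\lambda(D_\xi^{N+1},\boldsymbol\alpha\!\cdot\!\boldsymbol D_p^{N+1})\). Each \(D_\xi^{m}\) is controlled by the discrete inf-sup condition applied to the differenced momentum identity: \(\tilde\beta\|D_\xi^{m}\|_{L^2}\le\sup_{\boldsymbol v_h}\frac{b(\boldsymbol v_h,D_\xi^{m})}{\|\boldsymbol v_h\|_{H^1}}=\sup_{\boldsymbol v_h}\frac{2\mu(\varepsilon(D_{\boldsymbol u}^{m}),\varepsilon(\boldsymbol v_h))}{\|\boldsymbol v_h\|_{H^1}}\le2\mu\|\varepsilon(D_{\boldsymbol u}^{m})\|_{L^2}\). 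Young's inequality then bounds each interior term by \(\varepsilon\mu\|\varepsilon(D_{\boldsymbol u}^{m})\|^2+\frac{\mu}{\varepsilon\lambda^2\tilde\beta^2}\|\boldsymbol\alpha\!\cdot\!(\boldsymbol D_p^{m+1}-\boldsymbol D_p^{m})\|^2\), the first absorbed into \(2\mu\|\varepsilon(D_{\boldsymbol u}^{m})\|^2\) and the second into the stabilizer dissipation \(\frac L2\|\boldsymbol\alpha\!\cdot\!(\boldsymbol D_p^{m+1}-\boldsymbol D_p^{m})\|^2\); the end term \(\frac1\lambda|(D_\xi^{N+1},\boldsymbol\alpha\!\cdot\!\boldsymbol D_p^{N+1})|\le\frac\mu2\|\varepsilon(D_{\boldsymbol u}^{N+1})\|^2+\frac{2\mu}{\lambda^2\tilde\beta^2}\|\boldsymbol\alpha\!\cdot\!\boldsymbol D_p^{N+1}\|^2\) is absorbed into \(\mu\|\varepsilon(D_{\boldsymbol u}^{N+1})\|^2\) and the telescoped \(\frac L2\|\boldsymbol\alpha\!\cdot\!\boldsymbol D_p^{N+1}\|^2\), which forces \(\frac{2\mu}{\lambda^2\tilde\beta^2}\le\frac L8\), i.e.\ \(L\ge\frac{16\mu}{\lambda^2\tilde\beta^2}\); the choice \(L=\mu/\lambda^2\) keeps this scaling robust in \(\lambda\). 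What makes this step delicate — and why the stabilizer is indispensable — is that the coercive quantity on the parabolic side is only \(\|\boldsymbol\alpha\!\cdot\!\boldsymbol D_p^{n+1}-D_\xi^{n+1}\|^2\), which does \emph{not} bound \(\|\boldsymbol\alpha\!\cdot\!\boldsymbol D_p^{n+1}\|\) or \(\|D_\xi^{n+1}\|\) separately, so a crude Young estimate of the coupling terms leaves a deficit of \(\frac1\lambda\|\boldsymbol\alpha\!\cdot\!\boldsymbol D_p^{n+1}\|^2\) that only the stabilizer dissipation, routed through the inf-sup/elasticity norm, can close.

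\emph{Consistency, interpolation, initial step.} It remains to estimate the right-hand side. The \(\Delta t\)-scaled Taylor remainders of the backward difference (in \(\boldsymbol u,\xi,\boldsymbol p\)) and of the lag (in \(\xi\)), and the second-difference remainder \(\boldsymbol p^{n+1}-2\boldsymbol p^{n}+\boldsymbol p^{n-1}\) produced by the stabilizer acting on the exact solution, all have squared sums of order \((\Delta t)^3\int_0^T\|\partial_{tt}(\cdot)\|_{L^2}^2\); the interpolation errors of the time increments \((\xi^{n+1}-\xi^{n})-\Pi_h^W(\xi^{n+1}-\xi^{n})\) and \((\boldsymbol p^{n}-\boldsymbol p^{n-1})-\Pi_h^M(\boldsymbol p^{n}-\boldsymbol p^{n-1})\), after Cauchy--Schwarz and the interpolation estimates of Subsection 3.1 applied to \(\int_{t_n}^{t_{n+1}}\partial_t(\cdot)\), have squared sums of order \(h^{2k}\Delta t\int_0^T(\|\partial_t\boldsymbol u\|_{H^{k+1}}^2+\|\partial_t\xi\|_{H^k}^2)\) and \(h^{2l+2}\Delta t\int_0^T\|\partial_t\boldsymbol p\|_{H^{l+1}}^2\). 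Pairing each against \(D_\xi^{n+1}\), \(D_{p_j}^{n+1}\), \(\varepsilon(D_{\boldsymbol u}^{n+1})\) by Young's inequality with a small weight reproduces precisely the three terms on the right of \eqref{th1eq}, the \(\varepsilon\)-companions being absorbed on the left. The telescoped quantities at \(n=1\) — involving \(e_{\boldsymbol u}^{h,1},e_\xi^{h,1},\boldsymbol e_p^{h,1}\), with \(\boldsymbol e_p^{h,0}=0\), and, since the initial solve \eqref{in1}--\eqref{in3} is coupled, carrying no lag — are controlled by a standard backward-Euler/finite-element analysis of \eqref{in1}--\eqref{in3} and are of higher order. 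Collecting everything and fixing the Young weights gives \eqref{th1eq}; no discrete Gr\"onwall inequality is needed, since every quantity absorbed on the left already appears there.
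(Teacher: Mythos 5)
Your proposal is correct and follows essentially the same route as the paper: error equations via the Stokes and elliptic interpolants, time-differencing the Stokes identities, testing with the backward differences and telescoping, then closing the delay cross-terms with the discrete inf-sup bound $\tilde\beta\|D_\xi\|\le 2\mu\|\varepsilon(D_{\boldsymbol u})\|$ and the stabilizer dissipation, which yields the same threshold $L\ge 16\mu/(\lambda^2\tilde\beta^2)$, plus the standard consistency/interpolation bounds and a separate coupled initial-step estimate. The only deviation is cosmetic — you treat the lagged term $\frac1\lambda(D_\xi^{n},\boldsymbol\alpha\cdot\boldsymbol D_p^{n+1})$ by discrete summation by parts, whereas the paper applies Young's inequality directly and cancels against the diagonal terms after an index shift — and both bookkeepings land on the same estimate.
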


\begin{proof}
	We begin by deriving the error equation by subtracting equations \eqref{dis1}-\eqref{dis3} from \eqref{weakforu1}-\eqref{weakforu3}, respectively, then we have
	\begin{equation*}
		\begin{aligned}
			&a_1(e_{\boldsymbol{u}}^{n+1},\boldsymbol{v}_h)-b(\boldsymbol{v}_h,e_\xi^{n+1})=0,\\
			&b(e_{\boldsymbol{u}}^{n+1}, \phi_h )+a_2(e_\xi^{n+1},\phi_h)=\left( \frac{1}{\lambda} \boldsymbol{\alpha} \! \cdot \! \left( \boldsymbol{p}^{n+1} - \boldsymbol{p}_h^n \right) ,\phi_h \right),
		\end{aligned}
	\end{equation*}
	and
	\begin{equation}
		\begin{aligned}\label{errP}
			&a_3 \! \left( \! \partial_t p_j^{n+1} \! -\! \frac{p_{j,h}^{n+1}-p_{j,h}^n}{\Delta t} ,\psi_{j,h} \! \right) \! +\! \left( \! \frac{\alpha_j}{\lambda} \! \left( \!\boldsymbol{\alpha} \! \cdot \! \partial_t \boldsymbol{p}^{n+1} \! -\! \frac{\boldsymbol{\alpha} \! \cdot \! \boldsymbol{p}_h^{n+1} \! - \! \boldsymbol{\alpha} \! \cdot \! \boldsymbol{p}_h^{n}}{\Delta t}\right), \psi_{j,h} \! \right)\\
			&+\left(\! L \alpha_j \boldsymbol{\alpha} \! \cdot \! \left( \! \partial_t \boldsymbol{p}^{n+1} \! -\! \frac{ \boldsymbol{p}_{h}^{n+1} \! -\! \boldsymbol{p}_{h}^n}{\Delta t} \! \right) ,\psi_{j,h} \! \right) \!-\! \left( \! L  \alpha_j \boldsymbol{\alpha} \! \cdot \! \left( \! \partial_t \boldsymbol{p}^{n+1} \! -\! \frac{\boldsymbol{p}_{h}^n \! -\! \boldsymbol{p}_{h}^{n-1}}{\Delta t} \right),\psi_{j,h} \!  \right)\\
			 &- \left( \! \frac{\alpha_j}{\lambda} \!
			\left( \! \partial_t \xi^{n+1} \! -\! \frac{\xi_h^{n}\! -\! \xi_h^{n-1}}{\Delta t}\!  \right),\psi_{j,h} \! \right)+d(e_{p_j}^{n+1},\psi_{j,h}) =0 , \quad j=1,2,\cdots, A.
		\end{aligned}
	\end{equation}
	Using the Stokes projection operator \eqref{eqs}, we have
	\begin{align}
		&a_1(e_{\boldsymbol{u}}^{h,n+1},  {\boldsymbol{v}_h} )-b({\boldsymbol{v}_h},e_\xi^{h,n+1})=0,\label{errorforu1}\\
		&b(e_{\boldsymbol{u}}^{h,n+1}, \phi_h )+a_2(e_\xi^{n+1},\phi_h)=\left( \frac{1}{\lambda} \boldsymbol{\alpha} \! \cdot \! \left( \boldsymbol{p}^{n+1} - \boldsymbol{p}^n+ \boldsymbol{e}_p^n \right) ,\phi_h \right).\label{twoofe}
	\end{align}
	{Making} the difference between the $(n+1)$-th case and the $n$-th case of \eqref{twoofe}, we have
	\begin{equation}
		\begin{aligned}\label{errorforb}
&b(D_{\boldsymbol{u}}^{n+1},\phi_h)+a_2(D_\xi^{n+1},\phi_h)\!+\!a_2(\xi^{n+1} \! -\! \xi^n,\phi_h)-a_2(\Pi_h^W\xi^{n+1}-\Pi_h^W \xi^{n},\phi_h)\\
			=&\left( \frac{1}{\lambda} \boldsymbol{\alpha} \! \cdot \! \left(  \boldsymbol{p}^{n+1}-\boldsymbol{p}^n \! \right) ,\phi_h \! \right)+ \left( \! \frac{1}{\lambda} \boldsymbol{\alpha} \! \cdot \! \boldsymbol{D}_p^n,\phi_h \! \right) \!- \! \left( \! \frac{1}{\lambda} \boldsymbol{\alpha} \! \cdot \! \left( \Pi_h^M \boldsymbol{p}^n \! -\! \Pi_h^M \boldsymbol{p}^{n-1} \right),\phi_h \! \right).
		\end{aligned}
	\end{equation}
	Noting the following equations hold for the continuous problem in \eqref{weakforu2},
	\begin{equation}
		\begin{aligned}\label{exactforb}
			&b(\boldsymbol{u}^{n+1}-\boldsymbol{u}^n,\phi_h)+a_2(\xi^{n+1}-\xi^n,\phi_h)=\left( \frac{1}{\lambda} \boldsymbol{\alpha} \! \cdot \! \left( \boldsymbol{p}^{n+1} - \boldsymbol{p}^n \right) ,\phi_h \right),\\
			&b(\Delta t \partial_t \boldsymbol{u}^{n+1},\phi_h)+a_2(\Delta t \partial_t \xi^{n+1},\phi_h)=\left( \frac{\Delta t}{\lambda} \boldsymbol{\alpha} \! \cdot \! \partial_t  \boldsymbol{p}^{n+1}  ,\phi_h \right).
		\end{aligned}
	\end{equation}
	By substituting the two equations from \eqref{exactforb} into \eqref{errorforb}, we obtain
		\begin{align}
			\label{errorforb2} &b(D_{\boldsymbol{u}}^{n+1},\phi_h)\!+\!a_2(D_\xi^{n+1},\phi_h)\!-\! \left( \frac{1}{\lambda} \boldsymbol{\alpha} \! \cdot \! \boldsymbol{D}_p^n,\phi_h \right)\\
			=&	\left( \frac{1}{\lambda} \boldsymbol{\alpha} \! \cdot \! \left(  \boldsymbol{p}^{n+1} \! -\! \boldsymbol{p}^n \right) ,\phi_h \! \right)\! - \! \left( \frac{1}{\lambda} \boldsymbol{\alpha} \! \cdot \! \left( \! \Pi_h^M \boldsymbol{p}^n \! -\!  \Pi_h^M \boldsymbol{p}^{n-1} \! \right),\phi_h \! \right) \notag \\
			&-a_2(\xi^{n+1}\! -\! \xi^n,\phi_h)+a_2\! \left(\Pi_h^W\xi^{n+1} \! -\! \Pi_h^W \xi^{n},\phi_h \! \right) \notag \\
			=&b\left(\boldsymbol{u}^{n+1} \! -\! \boldsymbol{u}^n,\phi_h\right)- \left( \frac{1}{\lambda} \boldsymbol{\alpha} \! \cdot \! \left( \!  \Pi_h^M \boldsymbol{p}^n \! -\! \Pi_h^M \boldsymbol{p}^{n-1} \! \right),\phi_h \right)+a_2( \Pi_h^W\xi^{n+1} \! -\! \Pi_h^W \xi^{n},\phi_h) \notag \\
			=&b\left(\boldsymbol{u}^{n+1} \! -\! \boldsymbol{u}^n \! -\! \Delta t \partial_t \boldsymbol{u}^{n+1},\phi_h\right)- \left( \frac{1}{\lambda} \boldsymbol{\alpha} \! \cdot \! \left( \Pi_h^M \boldsymbol{p}^n \! -\! \Pi_h^M \boldsymbol{p}^{n-1}  \! -\! \Delta t \partial_t  \boldsymbol{p}^{n+1} \! \right) ,\phi_h \! \right) \notag \\
			&+a_2(\Pi_h^W\xi^{n+1} \! -\! \Pi_h^W \xi^{n} \!-\!\Delta t \partial_t \xi^{n+1},\phi_h). \notag
		\end{align}
	For the error equations for the parabolic system, from \eqref{errP} and elliptic projection operator \eqref{eqfore}, we have
	\begin{align}
		& \label{errorforp} a_3\left(  \frac{e_{p_j}^{h,n+1}-e_{p_j}^{h,n}}{\Delta t} ,\psi_{j,h} \right)+ \left(   \frac{\alpha_j}{\lambda}  \frac{\boldsymbol{\alpha} \! \cdot \! \left( \boldsymbol{e}_p^{h,n+1}-\boldsymbol{e}_p^{h,n} \right)}{\Delta t}  ,\psi_{j,h} \right)  \\
		& +\left( L  \alpha_j   \frac{  \boldsymbol{\alpha} \! \cdot \! \left( \boldsymbol{e}_{p}^{h,n+1}-\boldsymbol{e}_{p}^{h,n}\right) }{\Delta t} ,\psi_{j,h} \right) - \left( L \alpha_j   \frac{ \boldsymbol{\alpha} \! \cdot \! \left( \boldsymbol{e}_{p}^{h,n}-\boldsymbol{e}_{p}^{h,n-1}\right)}{\Delta t} ,\psi_{j,h}  \right)  \notag  \\
		&- \left(  \frac{\alpha_j}{\lambda} 
		\frac{e_\xi^{h,n}-e_\xi^{h,n-1}}{\Delta t} ,\psi_{j,h} \right)+d(e_{p_j}^{h,n+1},\psi_{j,h}) \notag \\
		=& -a_3\left( \partial_t p_j^{n+1}- \frac{\Pi_h^{M_j}p_j^{n+1}-\Pi_h^{M_j}p_j^n}{\Delta t} ,\psi_{j,h} \right) \notag  \\
		& -\left(\frac{\alpha_j}{\lambda} \left( \boldsymbol{\alpha} \! \cdot \! \partial_t \boldsymbol{p}^{n+1}- \frac{\boldsymbol{\alpha} \! \cdot \! \left( \Pi_h^M \boldsymbol{p}^{n+1}- \Pi_h^M \boldsymbol{p}^{n}\right) }{\Delta t}\right), \psi_{j,h}\right) \notag \\
		& -\left(L \alpha_j \boldsymbol{\alpha} \! \cdot \!  \left( \partial_t \boldsymbol{p}^{n+1}-\frac{    \Pi_h^{M}  \boldsymbol{p}^{n+1}-\Pi_h^M \boldsymbol{p}^n}{\Delta t} \right),\psi_{j,h} \right)  \notag \\
		&+ \left( L \alpha_j \boldsymbol{\alpha} \! \cdot \! \left(  \partial_t \boldsymbol{p}^{n+1}-\frac{   \Pi_h^{M}  \boldsymbol{p}^{n}-\Pi_h^M \boldsymbol{p}^{n-1}}{\Delta t} \right),\psi_{j,h}  \right)  \notag \\
		&+ \left(  \frac{\alpha_j}{\lambda} 
		\left( \partial_t \xi^{n+1}-\frac{\Pi_h^W \xi^{n}-\Pi_h^W \xi^{n-1}}{\Delta t} \right),\psi_{j,h} \right). \notag
		\end{align}
	Making the difference between the $(n+1)$-th case and the $n$-th case of \eqref{errorforu1} and setting $\boldsymbol{v}_h = D_{\boldsymbol{u}}^{n+1}$, then, taking $\phi_h = D_\xi^{n+1}$ in \eqref{errorforb2}, we obtain the following two equations, respectively:
	\begin{equation}\label{errorequationtotal}
		\begin{aligned}
			&a_1(D_{\boldsymbol{u}}^{n+1},D_{\boldsymbol{u}}^{n+1})-b(D_\mathbf{u}^{n+1},D_\xi^{n+1})=0,
		\end{aligned}
	\end{equation}
	and
		\begin{align}		&b(D_{\boldsymbol{u}}^{n+1},D_\xi^{n+1})+a_2(D_\xi^{n+1},D_\xi^{n+1})-\left( \frac{1}{\lambda} \boldsymbol{\alpha} \! \cdot \! \boldsymbol{D}_p^n,D_\xi^{n+1} \right) \label{errorequationtotall} \\
			=&  b\left(\boldsymbol{u}^{n+1}-\boldsymbol{u}^n-\Delta t \partial_t \boldsymbol{u}^{n+1},D_\xi^{n+1} \right) \notag \\
			&- \left( \frac{1}{\lambda} \boldsymbol{\alpha} \! \cdot \! \left( \Pi_h^M \boldsymbol{p}^n- \Pi_h^M \boldsymbol{p}^{n-1} - \Delta t \partial_t  \boldsymbol{p}^{n+1} \right) ,D_\xi^{n+1} \right) \notag \\
			&+a_2\left( \Pi_h^W\xi^{n+1}-\Pi_h^W \xi^{n}-\Delta t \partial_t \xi^{n+1},D_\xi^{n+1} \right). \notag
		\end{align}
	
	Letting $\psi_{j,h} = \Delta t D_{p_j}^{n+1}$ in \eqref{errorforp} and summing over $j = 1, \cdots, A$, we obtain
		\begin{align} 
			& \sum_{j=1}^A a_3\left(  {D}_{p_j}^{n+1} , {D}_{p_j}^{n+1}  \right)+\left( L \boldsymbol{\alpha} \! \cdot \!  \left(  \boldsymbol{D}_p^{n+1} -  \boldsymbol{D}_p^{n} \right) , \boldsymbol{\alpha} \! \cdot \! \boldsymbol{D}_p^{n+1} \right)  \notag \\
			&+\left( \frac{1}{\lambda} \boldsymbol{\alpha} \! \cdot \! \boldsymbol{D}_p^{n+1},\boldsymbol{\alpha} \! \cdot \! \boldsymbol{D}_p^{n+1}   \right)- \left(  \frac{1}{\lambda} 
			D_\xi^n ,\boldsymbol{\alpha} \! \cdot \! \boldsymbol{D}_p^{n+1} \right)+\Delta t  \sum_{j=1}^A d\left({e}_{p_j}^{h, n+1}, {D}_{p_j}^{n+1}\right)\label{errorfore}  \\
			=& -\sum_{j=1}^Aa_3\left( \Delta t \partial_t p_j^{n+1}- \left(\Pi_h^{M_j}p_j^{n+1}-\Pi_h^{M_j}p_j^n \right),D_{p_j}^{n+1} \right) 	\notag \\
			&-\left(\frac{1}{\lambda} \left( \Delta t \boldsymbol{\alpha} \! \cdot \! \partial_t \boldsymbol{p}^{n+1}- \boldsymbol{\alpha} \! \cdot\! \left( \Pi_h^M \boldsymbol{p}^{n+1}- \Pi_h^M \boldsymbol{p}^{n}\right) \right),  \boldsymbol{\alpha} \! \cdot \! \boldsymbol{D}_p^{n+1} \right) \notag \\
			&- \left(L \boldsymbol{\alpha} \! \cdot \!  \left(\Delta t \partial_t \boldsymbol{p}^{n+1}-   \Pi_h^{M}  \boldsymbol{p}^{n+1}+\Pi_h^M \boldsymbol{p}^n \right), \boldsymbol{\alpha} \! \cdot \! \boldsymbol{D}_{p}^{n+1} \right) \notag \\
			&+ \left( L \boldsymbol{\alpha} \! \cdot \! \left( \Delta t \partial_t \boldsymbol{p}^{n+1}-   \Pi_h^{M}  \boldsymbol{p}^{n}+ \Pi_h^M \boldsymbol{p}^{n-1} \right), \boldsymbol{\alpha} \! \cdot \! \boldsymbol{D}_{p}^{n+1}  \right) \notag \\
			&+ \left(  \frac{1}{\lambda} 
			\left( \Delta t \partial_t \xi^{n+1}-\Pi_h^W \xi^{n}+\Pi_h^W \xi^{n-1}  \right), \boldsymbol{\alpha} \! \cdot \! \boldsymbol{D}_p^{n+1} \right).\notag 
		\end{align}
	Noting that the following identity holds,
	\begin{align*}
	\frac{1}{2 \lambda} \! \left\|D_{\xi}^{n+1}\right\|_{L^2(\Omega)}^2 \!+\! \frac{1}{2 \lambda}\! \left\| \boldsymbol{\alpha} \! \cdot \! \boldsymbol{D}_p^{n+1}\right\|_{L^2(\Omega)}^2 \!-\! \left( \! \frac{1}{\lambda} \boldsymbol{\alpha} \! \cdot \! \boldsymbol{D}_p^{n+1}, D_{\xi}^{n+1} \! \right) \!=\! \frac{1}{2 \lambda} \! \left\| \boldsymbol{\alpha} \! \cdot \! \boldsymbol{D}_p^{n+1} \! -\! D_{\xi}^{n+1}\right\|_{L^2(\Omega)}^2.
	\end{align*}
	By summing the indices \( n \) from \( 1 \) to \( N \) for equations \eqref{errorequationtotal}, \eqref{errorequationtotall}, and \eqref{errorfore}, and applying the above equation to the left-hand side, we obtain the following equation:
		\begin{align}
			& \sum_{n=1}^N\left[2 \mu\left\|\varepsilon \! \left(D_\mathbf{u}^{n+1}\right)\right\|_{L^2(\Omega)}^2+\frac{1}{2 \lambda}\left\|D_{\xi}^{n+1}\right\|_{L^2(\Omega)}^2+ \sum_{j=1}^A {c_j}\left\| D_{p_j}^{n+1} \right\|^2_{L^2(\Omega)} \right.\notag \\
			& \left. + \frac{1}{2 \lambda}\left\| \boldsymbol{\alpha} \! \cdot \! \boldsymbol{D}_p^{n+1}\right\|_{L^2(\Omega)}^2+\frac{1}{2 \lambda}\left\| \boldsymbol{\alpha }  \! \cdot \!  \boldsymbol{D}_p^{n+1}-D_{\xi}^{n+1}\right\|_{L^2(\Omega)}^2\right]\label{errorall}  \\
			&+\frac{L}{2} \left( \left\| \boldsymbol{\alpha} \! \cdot \! \boldsymbol{D}_p^{N+1}  \right\|_{L^2(\Omega)}^2 -\left\| \boldsymbol{\alpha} \! \cdot \! \boldsymbol{D}_p^{1}  \right\|_{L^2(\Omega)}^2  +\sum_{n=1}^N  \left\| \boldsymbol{\alpha} \! \cdot \! \left( \boldsymbol{D}_p^{n+1}   - \boldsymbol{D}_p^{n} \right) \right\|_{L^2(\Omega)}^2      \right) \notag \\
			& +\Delta t \sum_{n=1}^N \sum_{j=1}^A d\left({e}_{p_j}^{h, n+1}, {D}_{p_j}^{n+1}\right)=\sum_{i=1}^{10} T_i, \notag
		\end{align}
	where
	\begin{align*} 
		& T_1=\sum_{n=1}^N b\left(\boldsymbol{u}^{n+1}-\boldsymbol{u}^n-\Delta t \partial_t \boldsymbol{u}^{n+1}, D_{\xi}^{n+1}\right), \\ 
		& T_2=\sum_{n=1}^N a_2\left( \Pi_h^W\xi^{n+1}-\Pi_h^W \xi^{n}-\Delta t \partial_t \xi^{n+1},D_\xi^{n+1} \right), \\ 
		& T_3=\sum_{n=1}^N -\left( \frac{1}{\lambda} \boldsymbol{\alpha} \! \cdot \! \left( \Pi_h^M \boldsymbol{p}^n- \Pi_h^M \boldsymbol{p}^{n-1} - \Delta t \partial_t  \boldsymbol{p}^{n+1} \right) ,D_\xi^{n+1} \right), \\
		& T_4=\sum_{n=1}^N \sum_{j=1}^A -a_3\left( \Delta t \partial_t p_j^{n+1}- \left(\Pi_h^{M_j}p_j^{n+1}-\Pi_h^{M_j}p_j^n \right),D_{p_j}^{n+1} \right), \\ 
		&T_5=\sum_{n=1}^N-\left(\frac{1}{\lambda} \left( \Delta t \boldsymbol{\alpha} \! \cdot \! \partial_t \boldsymbol{p}^{n+1}- \boldsymbol{\alpha} \! \cdot \! \left( \Pi_h^M \boldsymbol{p}^{n+1}- \Pi_h^M \boldsymbol{p}^{n}\right) \right),  \boldsymbol{\alpha} \! \cdot \! \boldsymbol{D}_p^{n+1} \right),\\
		& T_6=\sum_{n=1}^N \left(  \frac{1}{\lambda} 
		\left( \Delta t \partial_t \xi^{n+1}-\Pi_h^W \xi^{n}+\Pi_h^W \xi^{n-1}  \right), \boldsymbol{\alpha} \! \cdot \! \boldsymbol{D}_p^{n+1} \right), \\ 
		& T_7=\sum_{n=1}^N  - \left(L \boldsymbol{\alpha} \!\cdot \!  \left(\Delta t \partial_t \boldsymbol{p}^{n+1}-   \Pi_h^{M}  \boldsymbol{p}^{n+1}+\Pi_h^M \boldsymbol{p}^n \right), \boldsymbol{\alpha} \! \cdot \! \boldsymbol{D}_{p}^{n+1} \right),\\
		& T_8=\sum_{n=1}^N \left( L \boldsymbol{\alpha} \! \cdot \! \left( \Delta t \partial_t \boldsymbol{p}^{n+1}-   \Pi_h^{M}  \boldsymbol{p}^{n}+ \Pi_h^M \boldsymbol{p}^{n-1} \right), \boldsymbol{\alpha} \! \cdot \! \boldsymbol{D}_{p}^{n+1}  \right),\\
		& T_9=\sum_{n=1}^N -\left( \frac{1}{\lambda} \boldsymbol{\alpha} \! \cdot \! \left( \boldsymbol{D}_p^{n+1} -\boldsymbol{D}_p^n \right), D_{\xi}^{n+1}\right),\\
		&T_{10}=\sum_{n=1}^N\left(  \frac{1}{\lambda} 
		D_\xi^n ,\boldsymbol{\alpha} \! \cdot \! \boldsymbol{D}_p^{n+1} \right).
	\end{align*}
	
	For the final term on the left-hand side of equation \eqref{errorall}, we can represent it using the definitions of \( d(\cdot,\cdot) \) and \( S_j(\cdot) \) in \eqref{def_S}, yielding
	\begin{align*}
		&	\Delta t \sum_{n=1}^N \sum_{j=1}^A d\left({e}_{p_j}^{h, n+1}, {D}_{p_j}^{n+1}\right)\\
		=& \Delta t \sum_{n=1}^N \sum_{j=1}^A \! \left[ \! \int_{\Omega}  \kappa_j \nabla e_{p_j}^{h,n+1}  \nabla D_{p_j}^{n+1}  \! + \! \sum_{i=1}^A \! \int_{\Omega} \!  s_{j\leftarrow i} (e^{h,n+1}_{p_j}-e_{p_i}^{h,n+1}) (e^{h,n+1}_{p_j}-e_{p_j}^{h,n}) \! \right]\\
		=&\Delta t \sum_{n=1}^N \sum_{j=1}^A \! \left[\sum_{i=1}^A \int_{\Omega}  s_{j\leftarrow i} (e^{h,n+1}_{p_j}-e_{p_i}^{h,n+1}) \left(   \frac{1}{2} \left( e^{h,n+1}_{p_j} -e^{h,n+1}_{p_i}  \right)+\frac{1}{2} \! \left( \! e^{h,n+1}_{p_j} +e^{h,n+1}_{p_i}  \! \right) \right. \right. \\
		&\left. \left. -  \frac{1}{2} \left( e_{p_j}^{h,n}-e_{p_i}^{h,n}     \right)   -\frac{1}{2} \left( e_{p_j}^{h,n}+e_{p_i}^{h,n}     \right)   \right) \right. + \left.   \int_{\Omega}  \kappa_j \nabla e_{p_j}^{h,n+1}  \nabla D_{p_j}^{n+1}     \right]\\
		=&\Delta t \sum_{n=1}^N \sum_{i,j=1}^A  \int_{\Omega}  \frac{1}{2} s_{j\leftarrow i} (e^{h,n+1}_{p_j}-e_{p_i}^{h,n+1}) \left(   \left( e^{h,n+1}_{p_j} -e^{h,n+1}_{p_i}  \right) -  \left( e_{p_j}^{h,n}-e_{p_i}^{h,n}     \right)     \right) \\
		&+ \Delta t \sum_{n=1}^N \sum_{i,j=1}^A  \int_{\Omega} \frac{1}{2} s_{j\leftarrow i} (e^{h,n+1}_{p_j}-e_{p_i}^{h,n+1}) \left(  \left( e^{h,n+1}_{p_j} +e^{h,n+1}_{p_i}  \right)    - \left( e_{p_j}^{h,n}+e_{p_i}^{h,n}     \right)   \right) \\
		&+  \Delta t \sum_{n=1}^N \sum_{j=1}^A  \! \frac{ \kappa_j }{2} \! \left( \! \left\|  \nabla e_{p_j}^{h,n+1} \right\|^2_{L^2(\Omega)} -\left\|  \nabla e_{p_j}^{h,n} \right\|^2_{L^2(\Omega)} + \left\|  \nabla D_{p_j}^{n+1} \right\|^2_{L^2(\Omega)}    		\right)\\
		=&\Delta t \! \sum_{n=1}^N \sum_{i,j=1}^A  \! \frac{s_{j\leftarrow i}}{4} \! \left( \!  \left\|  e^{h,n+1}_{p_j}-e_{p_i}^{h,n+1}  \right\|_{L^2(\Omega)}^2  -\left\|  e^{h,n}_{p_j}-e_{p_i}^{h,n}  \right\|_{L^2(\Omega)}^2    +\left\|  D^{n+1}_{p_j}-D_{p_i}^{n+1}  \right\|_{L^2(\Omega)}^2         \! \right)  \\
		&+  \Delta t \sum_{n=1}^N \sum_{j=1}^A   \frac{ \kappa_j }{2} \left( \left\|  \nabla e_{p_j}^{h,n+1} \right\|^2_{L^2(\Omega)} -\left\|  \nabla e_{p_j}^{h,n} \right\|^2_{L^2(\Omega)} + \left\|  \nabla D_{p_j}^{n+1} \right\|^2_{L^2(\Omega)}    		\right)      \\
		=&\Delta t  \sum_{i,j=1}^A \!  \frac{s_{j\leftarrow i}}{4}  \left( \!  \left\|  e^{h,N+1}_{p_j}-e_{p_i}^{h,N+1}  \right\|_{L^2(\Omega)}^2  -\left\|  e^{h,1}_{p_j}-e_{p_i}^{h,1}  \right\|_{L^2(\Omega)}^2    +\sum_{n=1}^N \left\|  D^{n+1}_{p_j}-D_{p_i}^{n+1}  \right\|_{L^2(\Omega)}^2        \!  \right)  \\
		&+  \Delta t \sum_{j=1}^A   \frac{ \kappa_j }{2} \left( \left\|  \nabla e_{p_j}^{h,N+1} \right\|^2_{L^2(\Omega)} -\left\|  \nabla e_{p_j}^{h,1} \right\|^2_{L^2(\Omega)} + \sum_{n=1}^N \left\|  \nabla D_{p_j}^{n+1} \right\|^2_{L^2(\Omega)}    		\right)   .
	\end{align*}
	
	Thus, for the left-hand side of \eqref{errorall}, we have
        \begin{align}
			& \sum_{n=1}^N\left[2 \mu\left\|\varepsilon \! \left(D_{\boldsymbol{u}}^{n+1}\right)\right\|_{L^2(\Omega)}^2+\frac{1}{2 \lambda}\left\|D_{\xi}^{n+1}\right\|_{L^2(\Omega)}^2+ \sum_{j=1}^A {c_j}\left\| D_{p_j}^{n+1} \right\|^2_{L^2(\Omega)}\right.\notag \\
			& \left. + \frac{1}{2 \lambda}\left\| \boldsymbol{\alpha}\! \cdot \! \boldsymbol{D}_p^{n+1}\right\|_{L^2(\Omega)}^2  
		    +\frac{1}{2 \lambda}\left\| \boldsymbol{\alpha }  \! \cdot \! \boldsymbol{D}_p^{n+1}-D_{\xi}^{n+1}\right\|_{L^2(\Omega)}^2\right] \label{errforalllast}  \\
			&+\frac{L}{2} \left( \left\| \boldsymbol{\alpha} \! \cdot \! \boldsymbol{D}_p^{N+1}  \right\|_{L^2(\Omega)}^2 \! -\! \left\| \boldsymbol{\alpha} \! \cdot \! \boldsymbol{D}_p^{1}  \right\|_{L^2(\Omega)}^2  \! +\! \sum_{n=1}^N  \left\| \boldsymbol{\alpha} \! \cdot \! \left( \boldsymbol{D}_p^{n+1} \!  -\! \boldsymbol{D}_p^{n} \right) \right\|_{L^2(\Omega)}^2      \right) \notag \\
			&+\Delta t \sum_{j=1}^A   \frac{ \kappa_j }{2} \left( \left\|  \nabla e_{p_j}^{h,N+1} \right\|^2_{L^2(\Omega)} -\left\|  \nabla e_{p_j}^{h,1} \right\|^2_{L^2(\Omega)}    		\right) \notag \\
			& +\Delta t  \sum_{i,j=1}^A  \frac{s_{j\leftarrow i}}{4}  \left(   \left\|  e^{h,N+1}_{p_j}-e_{p_i}^{h,N+1}  \right\|_{L^2(\Omega)}^2  -\left\|  e^{h,1}_{p_j}-e_{p_i}^{h,1}  \right\|_{L^2(\Omega)}^2         \right)  \leqslant \notag \text{L.H.S.}  
		\end{align}
	
	Next, we bound the terms $T_i$ for $i=1,2, \ldots, 10$. Recalling the definition of \( b(\cdot, \cdot) \) and $a_2(\cdot,\cdot)$, we can employ the Cauchy-Schwarz inequality, Young's inequality, Stokes projection operator and Taylor expansion to derive the following estimate for \( T_1 \) and $T_2$, for any \( \epsilon_1 > 0 \).
	\begin{align*}
		T_1 +T_2
		 \leq& \frac{\epsilon_1}{2} \sum_{n=1}^N\left\|D_{\xi}^{n+1}\right\|_{L^2(\Omega)}^2 \! +\! \frac{C}{\epsilon_1} \! \sum_{n=1}^N\left\| \nabla  \! \cdot \! \left(\boldsymbol{u}^{n+1} \! -\! \boldsymbol{u}^n \! -\! \Delta t \partial_t \boldsymbol{u}^{n+1}\right)\right\|_{L^2(\Omega)}^2\\
		&+\! \frac{C}{\epsilon_1 \lambda^2} \sum_{n=1}^N\left( \! \left\|\xi^{n+1}\!-\! \xi^n \!-\! \Delta t \partial_t \xi^{n+1}\right\|_{L^2(\Omega)}^2 \!+ \! \left\|\Pi_h^W \!\left(\xi^{n+1}\!- \! \xi^n\right) \! -\! \left(\xi^{n+1} \! -\! \xi^n\right)\right\|_{L^2(\Omega)}^2\!\right)\\
         \leq & \frac{\epsilon_1}{2} \sum_{n=1}^N\left\|D_{\xi}^{n+1}\right\|_{L^2(\Omega)}^2+\frac{C}{\epsilon_1}(\Delta t)^3 \int_0^T\left\|\partial_{t t} \boldsymbol{u}\right\|_{H^1(\Omega)}^2 d s\\
        &+\frac{C}{\epsilon_1 \lambda^2}\left[(\Delta t)^3 \int_0^T \! \!\! \left\|\partial_{t t} \xi\right\|_{L^2(\Omega)}^2 d s \! + \! h^{2 k} \Delta t \int_0^T \! \! \! \left(\left\|\partial_t \boldsymbol{u}\right\|_{H^{k+1}(\Omega)}^2 \!+\! \left\|\partial_t \xi \right\|_{H^k(\Omega)}^2 \! \right) d s\right].
	\end{align*}
We reformulate \( T_3 \) and utilizing the properties of the elliptic operator, we obtain:
	\begin{align*}
		T_3
		= & \sum_{n=1}^N \frac{1}{\lambda} \int_{\Omega} D_{\xi}^{n+1} \boldsymbol{\alpha}\! \cdot \! \left(\Delta t \partial_t \boldsymbol{p}^{n+1}-\Delta t \partial_t \boldsymbol{p}^n\right) \\
		&+\sum_{n=1}^N \frac{1}{\lambda} \int_{\Omega} D_{\xi}^{n+1}\boldsymbol{\alpha} \! \cdot \! \left(\Delta t \partial_t \boldsymbol{p}^n-\Pi_h^M \boldsymbol{p}^n+\Pi_h^M \boldsymbol{p}^{n-1}\right) \\
		\leq & \frac{\epsilon_1}{4} \sum_{n=1}^N\left\|D_{\xi}^{n+1}\right\|_{L^2(\Omega)}^2+\frac{C }{\epsilon_1 \lambda^2} \sum_{n=1}^N\left(\left\|\Delta t  \boldsymbol{\alpha} \! \cdot \! \left( \partial_t \boldsymbol{p}^{n+1}- \partial_t \boldsymbol{p}^n \right) \right\|_{L^2(\Omega)}^2\right. \\
		& \left.+\left\|  \boldsymbol{\alpha}\! \cdot \! \left( \Delta t \partial_t \boldsymbol{p}^n -\boldsymbol{p}^n\!+\! \boldsymbol{p}^{n-1} \right) \right\|_{L^2(\Omega)}^2 \! +\! \left\|  \boldsymbol{\alpha} \! \cdot \! \left( \boldsymbol{p}^n\! -\! \boldsymbol{p}^{n-1} \! -\! \Pi_h^M \left(\boldsymbol{p}^n-\boldsymbol{p}^{n-1}\right)\right) \right\|_{L^2(\Omega)}^2\right)\\
		\leq& \frac{\epsilon_1}{4} \sum_{n=1}^N\left\|D_{\xi}^{n+1}\right\|_{L^2(\Omega)}^2\!+ \! \frac{C \alpha_m^2}{\epsilon_1 \lambda^2}\left[(\Delta t)^3 \! \int_0^T \! \! \! \left\|\partial_{t t} \boldsymbol{p}\right\|_{L^2(\Omega)}^2 d s \! +\! h^{2 l+2} \Delta t \int_0^T \! \! \! \left\|\partial_t \boldsymbol{p} \right\|_{H^{l\!+\!1}(\Omega)}^2 d s \! \right],
	\end{align*}
	where $\alpha_m= \text{max}(\alpha_1,\cdots,\alpha_A)$.
	Following the definition of \( a_3(\cdot, \cdot) \), we derive the estimate for the term \( T_4 \), valid for any \( \epsilon_2 > 0 \),
		\begin{align*}
			T_4
			\leq & \sum_{n=1}^N \sum_{j=1}^A  \frac{c_j}{2} \left\|D_{p_j}^{n+1}\right\|_{L^2(\Omega)}^2 \!+ \sum_{n=1}^N \frac{c_m}{2}  \left\|  \Pi_h^M \boldsymbol{p}^{n+1}- \Pi_h^M \boldsymbol{p}^n-\Delta t \partial_t \boldsymbol{p}^{n+1} \right\|^2_{L^2(\Omega)} \\
			\leq &\! \sum_{n=1}^N \! \sum_{j=1}^A \!  \frac{c_j}{2} \! \left\|D_{p_j}^{n+1}\right\|_{L^2(\Omega)}^2\! \! +\! \frac{C c_m }{2} \! \left[ \! (\Delta t)^3 \! \int_0^T \! \! \! \left\|\partial_{t \!t} \boldsymbol{p}\right\|_{L^2(\Omega)}^2 d s \! +\! h^{2 l+2} \Delta t \int_0^T \! \! \! \left\|\partial_t \boldsymbol{p}\right\|_{H^{\! l+ \! 1}(\Omega\!)}^2\! d s \! \right],
		\end{align*}
	where $c_m=\text{max}(c_1,\cdots,c_A)$. Similarly, we obtain
		\begin{align*}
			T_5
			&\leq \! \frac{\epsilon_2 }{8}\! \sum_{n=1}^N \! \left\| \boldsymbol{\alpha}\!  \cdot \! \boldsymbol{D}_p^{n+1}\right\|_{L^2(\Omega)}^2\!+\! C\!  \frac{\alpha_m^2}{\epsilon_2 \lambda^2}\left[(\Delta t)^3 \! \int_0^T \! \! \! \left\|\partial_{t t} \boldsymbol{p} \right\|_{L^2(\Omega)}^2 d s \!+ \! h^{2  l+2} \! \Delta t \! \int_0^T \! \! \left\|\partial_t \boldsymbol{p} \right\|_{H^{l+1} \! (\Omega \! )}^2 d s \! \right]\!.
		\end{align*}
	Using a similar approach, we can bound \( T_6 \) as follows,
	\begin{align*}
		T_6
		\leq & \frac{\epsilon_2 }{8} \sum_{n=1}^N\left\| \boldsymbol{\alpha} \! \cdot \! \boldsymbol{D}_p^{n+1}\right\|_{L^2(\Omega)}^2+\frac{C}{\epsilon_2 \lambda^2}\left[(\Delta t)^3 \int_0^T\left\|\partial_{t t} \xi\right\|_{L^2(\Omega)}^2 d s \right. \\
		&\left. +h^{2 k} \Delta t \int_0^T\left(\left\|\partial_t \boldsymbol{u}\right\|_{H^{k+1}(\Omega)}^2+\left\|\partial_t \xi\right\|_{H^k(\Omega)}^2\right) d s\right].
	\end{align*}
	Also, we bound $T_7$ and $T_8$ as follows,
	\begin{align*}
		T_7
		\leq & \sum_{n=1}^N \left( \frac{\epsilon_2 }{8} \left\| \boldsymbol{\alpha} \! \cdot \! \boldsymbol{D}_{p}^{n+1}\right\|_{L^2(\Omega)}^2 + \frac{CL^2  }{\epsilon_2 }\left\|  \boldsymbol{\alpha} \! \cdot \! \left(  \Pi_h^{M}  \boldsymbol{p}^{n+1}-\Pi_h^M \boldsymbol{p}^n -\Delta t \partial_t \boldsymbol{p}^{n+1} \right) \right\|_{L^2(\Omega)}^2 \right)\\
		\leq & \sum_{n=1}^N \left( \frac{\epsilon_2 }{8} \left\| \boldsymbol{\alpha} \! \cdot \! \boldsymbol{D}_{p}^{n+1}\right\|_{L^2(\Omega)}^2 + \frac{CL^2 \alpha_m^2 }{\epsilon_2 }\left\|  \Pi_h^{M}  \boldsymbol{p}^{n+1}-\Pi_h^M \boldsymbol{p}^n -\Delta t \partial_t \boldsymbol{p}^{n+1} \right\|_{L^2(\Omega)}^2 \right)\\
		\leq & \frac{\epsilon_2 }{8} \sum_{n=1}^N\left\| \boldsymbol{\alpha} \! \cdot \! \boldsymbol{D}_p^{n+1}\right\|_{L^2(\Omega)}^2 +C\frac{L^2 \alpha_m^2 }{\epsilon_2  }\left[(\Delta t)^3 \int_0^T\left\|\partial_{t t} \boldsymbol{p} \right\|_{L^2(\Omega)}^2 d s \right. \\
		& \left. +h^{2 l+2} \Delta t \int_0^T\left\|\partial_t \boldsymbol{p}\right\|_{H^{l+1}(\Omega)}^2 d s\right].\\
		T_8
		\leq & \frac{\epsilon_2 }{8} \sum_{n=1}^N\left\|\boldsymbol{\alpha}\! \cdot \! \boldsymbol{D}_p^{n+1}\right\|_{L^2(\Omega)}^2 \! +\! C\frac{L^2 \! \alpha_m^2 }{\epsilon_2}\! \left[ \! (\Delta t)^3\! \int_0^T\! \! \left\|\partial_{t t} \boldsymbol{p}\right\|_{L^2(\Omega)}^2 d s \! +\! h^{2 l+2} \!  \Delta t \! \int_0^T \!  \! \left\| \! \partial_t  \boldsymbol{p}\right\|_{H^{l+1}}^2 d s \! \right].
	\end{align*}
Next, we estimate \( T_9 \) and \( T_{10} \) as follows,	\begin{align*}
		T_9
		\leq &	\frac{\epsilon_1}{4} \sum_{n=1}^N\left\|D_{\xi}^{n+1}\right\|_{L^2(\Omega)}^2+\frac{1}{\lambda^2 \epsilon_1} \sum_{n=1}^N \| \boldsymbol{\alpha} \! \cdot \! \left( \boldsymbol{D}_p^{n+1}-\boldsymbol{D}_p^n \right) \|_{L^2(\Omega)}^2,\\
		T_{10}
		\leq &	 \sum_{n=1}^N  \frac{1}{2\lambda}\left\|D_{\xi}^{n}\right\|_{L^2(\Omega)}^2+ \sum_{n=1}^N  \frac{1}{2\lambda} \| \boldsymbol{\alpha} \! \cdot \! \boldsymbol{D}_p^{n+1} \|_{L^2(\Omega)}^2.
	\end{align*}
	Using the bounds of $T_1,\cdots, T_{10}$ in \eqref{errforalllast}, we have
	\begin{align*}
		& \sum_{n=1}^N\left[2 \mu\left\|\varepsilon \! \left(D_{\boldsymbol{u}}^{n+1}\right)\right\|_{L^2(\Omega)}^2+\frac{1}{2 \lambda}\left\|D_{\xi}^{n+1}\right\|_{L^2(\Omega)}^2+ \sum_{j=1}^A {c_j}\left\| D_{p_j}^{n+1} \right\|^2_{L^2(\Omega)}\right. \\
		&\left. +   \frac{1}{2 \lambda}\left\| \boldsymbol{\alpha} \! \cdot \! \boldsymbol{D}_p^{n+1}\right\|_{L^2(\Omega)}^2+\frac{1}{2 \lambda}\left\| \boldsymbol{\alpha } \! \cdot \!  \boldsymbol{D}_p^{n+1}-D_{\xi}^{n+1}\right\|_{L^2(\Omega)}^2\right]\\
		&+\frac{L}{2} \left( \! \left\| \boldsymbol{\alpha} \! \cdot \! \boldsymbol{D}_p^{N+1}  \right\|_{L^2(\Omega)}^2 -\left\| \boldsymbol{\alpha} \! \cdot \! \boldsymbol{D}_p^{1}  \right\|_{L^2(\Omega)}^2  +\sum_{n=1}^N  \left\| \boldsymbol{\alpha} \! \cdot \! \left( \boldsymbol{D}_p^{n+1}   - \boldsymbol{D}_p^{n} \right) \right\|_{L^2(\Omega)}^2     \! \right) \\
		& +\Delta t \sum_{j=1}^A   \frac{ \kappa_j }{2} \left( \left\|  \nabla e_{p_j}^{h,N+1} \right\|^2_{L^2(\Omega)} -\left\|  \nabla e_{p_j}^{h,1} \right\|^2_{L^2(\Omega)}    		\right)\\
		& +\Delta t  \sum_{i,j=1}^A  \frac{s_{j\leftarrow i}}{4}  \left(   \left\|  e^{h,N+1}_{p_j}-e_{p_i}^{h,N+1}  \right\|_{L^2(\Omega)}^2  -\left\|  e^{h,1}_{p_j}-e_{p_i}^{h,1}  \right\|_{L^2(\Omega)}^2         \right)  \\
		\leq&{\epsilon_1} \sum_{n=1}^N\left\|D_{\xi}^{n+1}\right\|_{L^2(\Omega)}^2+ \frac{\epsilon_2 }{2} \sum_{n=1}^N\left\|\boldsymbol{\alpha} \! \cdot \! \boldsymbol{D}_p^{n+1}\right\|_{L^2(\Omega)}^2+\sum_{n=1}^N \sum_{j=1}^A  \frac{c_j}{2} \left\|D_{p_j}^{n+1}\right\|_{L^2(\Omega)}^2\\
		&+\frac{1}{\lambda^2 \epsilon_1} \sum_{n=1}^N \|   \boldsymbol{\alpha} \! \cdot \! \left( \boldsymbol{D}_p^{n+1} \! -\!  \boldsymbol{D}_p^n  \right) \|_{L^2(\Omega)}^2 \! +\! \sum_{n=1}^N  \frac{1}{2\lambda}\left\|D_{\xi}^{n}\right\|_{L^2(\Omega)}^2+ \sum_{n=1}^N  \frac{1}{2\lambda} \| \boldsymbol{\alpha} \! \cdot \! \boldsymbol{D}_p^{n+1} \|_{L^2(\Omega)}^2\\
		&	+ {C}\left[(\Delta t)^3 \int_0^T\left(\left\|\partial_{t t} \boldsymbol{u}\right\|_{H^1(\Omega)}^2+\left\|\partial_{t t} \xi\right\|_{L^2(\Omega)}^2+\left\|\partial_{t t} \boldsymbol{p} \right\|_{L^2(\Omega)}^2\right) d s\right. \\
		& +h^{2 k} \Delta t \int_0^T\left(\left\|\partial_t \boldsymbol{u}\right\|_{H^{k+1}(\Omega)}^2+\left\|\partial_t \xi\right\|_{H^k(\Omega)}^2\right) d s  \left.+h^{2 l+2} \Delta t \int_0^T\left\|\partial_t \boldsymbol{p}\right\|_{H^{l+1}(\Omega)}^2 d s\right].
	\end{align*}
	Applying the inf-sup condition yields
	\begin{equation}\label{bound1}
		\begin{aligned}
		\tilde{\beta}\left\|D_{\xi}^{n+1}\right\|_{L^2(\Omega)} & \leq \sup _{\boldsymbol{v} \in \boldsymbol{V}_h} \frac{b\left(\boldsymbol{v}, D_{\xi}^{n+1}\right)}{\left\|\boldsymbol{v}\right\|_{H^1(\Omega)}}=\sup _{\boldsymbol{v} \in \boldsymbol{V}_h} \frac{a_1\left(D_{\boldsymbol{u}}^{n+1}, \boldsymbol{v}\right)}{\left\|\boldsymbol{v}\right\|_{H^1(\Omega)}} \leq 2 \mu \left\|\varepsilon \! \left(D_{\boldsymbol{u}}^{n+1}\right)\right\|_{L^2(\Omega)},
		\end{aligned}
	\end{equation}
	then we have
	\begin{equation}\label{boud2}
		\begin{aligned}
			\frac{1}{2}\left\| \boldsymbol{\alpha} \! \cdot \! \boldsymbol{D}_p^{n+1}\right\|_{L^2(\Omega)}^2 & \leq\left\| \boldsymbol{\alpha} \! \cdot \!  \boldsymbol{D}_p^{n+1} \! -\! D_{\xi}^{n+1}\right\|_{L^2(\Omega)}^2+\left\|D_{\xi}^{n+1}\right\|_{L^2(\Omega)}^2 \\
			& \leq\left\| \boldsymbol{\alpha} \! \cdot \! \boldsymbol{D}_p^{n+1}-D_{\xi}^{n+1}\right\|_{L^2(\Omega)}^2+ \left( \frac{2 \mu }{\tilde{\beta}}\right)^2 \left\|\varepsilon \! \left(D_{\boldsymbol{u}}^{n+1}\right)\right\|_{L^2(\Omega)}^2.
		\end{aligned}
	\end{equation}
	Thus, we set the coefficient $\epsilon_1=\frac{\tilde{\beta}^2}{8\mu }$, such that
	$$ {\epsilon_1} \left\|D_{\xi}^{n+1}\right\|_{L^2(\Omega)}^2 \leq \frac{\mu}{2}\left\|\varepsilon\left(D_{\boldsymbol{u}}^{n+1}\right)\right\|^2_{L^2(\Omega)}.$$
	Determine coefficient $\epsilon_2=\min \left\{\frac{1}{4 \lambda}, \frac{\tilde{\beta}^2}{8 \mu}\right\}$, such that
	$$ \frac{\epsilon_2  }{2} \left\| \boldsymbol{\alpha} \! \cdot \! \boldsymbol{D}_p^{n+1}\right\|_{L^2(\Omega)}^2 \leq \frac{\mu}{2}\left\|\varepsilon \! \left(D_{\boldsymbol{u}}^{n+1}\right)\right\|_{L^2(\Omega)}^2+\frac{1}{4 \lambda}\left\| \boldsymbol{\alpha} \! \cdot \! \boldsymbol{D}_p^{n+1} \! -\! D_{\xi}^{n+1}\right\|_{L^2(\Omega)}^2.$$
	Thus, we derive that
		\begin{align}
			& \sum_{n=1}^N \! \left[ \! \mu \! \left\|\varepsilon \! \left(D_\mathbf{u}^{n+1}\right)\right\|_{L^2(\Omega)}^2 \! +\! \sum_{j=1}^A \frac{c_j}{2}\left\| D_{p_j}^{n+1} \right\|^2_{L^2(\Omega)} \! +\! \frac{1}{4 \lambda}\left\| \boldsymbol{\alpha} \!  \cdot \! \boldsymbol{D}_p^{n+1} \! -\! D_{\xi}^{n+1}\right\|_{L^2(\Omega)}^2\right] \notag \\
			&+\frac{L}{2} \! \left\| \boldsymbol{\alpha}  \! \cdot  \!  \boldsymbol{D}_p^{N+1}   \! \right\|_{L^2(\Omega)}^2 \! + \!
			\left(\frac{L}{2} \! - \! \frac{1}{\lambda^2 \epsilon_1}\right)\sum_{n=1}^N  \left\| \boldsymbol{\alpha} \! \cdot \! \left(  \boldsymbol{D}_p^{n+1} \!  -  \! \boldsymbol{D}_p^{n}  \! \right) \right\|_{L^2(\Omega)}^2 \! +\! \frac{1}{2\lambda}\left\|D_{\xi}^{N+1}\right\|_{L^2(\Omega)}^2 \label{lastinth1}   \\
			&+\Delta t \sum_{j=1}^A   \frac{ \kappa_j }{2}  \left\|  \nabla e_{p_j}^{h,N+1} \right\|^2_{L^2(\Omega)} +\Delta t  \sum_{i,j=1}^A  \frac{s_{j\leftarrow i}}{4}    \left\|  e^{h,N+1}_{p_j}-e_{p_i}^{h,N+1}  \right\|_{L^2(\Omega)}^2  \notag   \\
			\leq & {C}\left[(\Delta t)^3 \int_0^T\left(\left\|\partial_{t t} \boldsymbol{u}\right\|_{H^1(\Omega)}^2+\left\|\partial_{t t} \xi\right\|_{L^2(\Omega)}^2+\left\|\partial_{t t} \boldsymbol{p}\right\|_{L^2(\Omega)}^2\right) d s\right. \notag \\
			& +h^{2 k} \Delta t \int_0^T\left(\left\|\partial_t \boldsymbol{u}\right\|_{H^{k+1}(\Omega)}^2+\left\|\partial_t \xi\right\|_{H^k(\Omega)}^2\right) d s  \left.+h^{2 l+2} \Delta t \int_0^T\left\|\partial_t \boldsymbol{p}\right\|_{H^{l+1}(\Omega)}^2 d s\right]\notag \\
			&  +\frac{L}{2}  \left\| \boldsymbol{\alpha } \! \cdot \! \boldsymbol{D}_p^{1}  \right\|_{L^2(\Omega)}^2+\frac{1}{2\lambda}\left\|D_{\xi}^{1}\right\|_{L^2(\Omega)}^2+ \Delta t \sum_{j=1}^A   \frac{ \kappa_j }{2} \left\|  \nabla e_{p_j}^{h,1} \right\|^2_{L^2(\Omega)}    \notag	\\
			&+\Delta t  \sum_{i,j=1}^A  \frac{s_{j\leftarrow i}}{4} \left\|  e^{h,1}_{p_j}-e_{p_i}^{h,1}  \right\|_{L^2(\Omega)}^2     . \notag
		\end{align}
	
	Next, we estimate the error contributed by the coupled solution at the initial time step for \eqref{in1}-\eqref{in3}. By employing a similar proof technique as before and the fact that $\boldsymbol{u}_h^0=\Pi_h^V \boldsymbol{u}^0, \xi_h^0=\Pi_h^W \xi^0, \boldsymbol{p}_h^0=\Pi_h^M \boldsymbol{p}^0$, it's not hard to obtain
	\begin{equation}\label{last2inth1}
		\begin{aligned}
			&  \mu \left\|\varepsilon \! \left(D_{\boldsymbol{u}}^1\right)\right\|_{L^2(\Omega)}^2+     \frac{1}{2\lambda} \left\|  \boldsymbol{\alpha} \! \cdot \!  \boldsymbol{D}_p^1-D_\xi^1     \right\|^2_{L^2(\Omega)} \\
			& + \Delta t \sum_{j=1}^A   \frac{ \kappa_j }{2} \left\|  \nabla e_{p_j}^{h,1} \right\|^2_{L^2(\Omega)}  +\Delta t  \sum_{i,j=1}^A  \frac{s_{j\leftarrow i}}{4} \left\|  e^{h,1}_{p_j}-e_{p_i}^{h,1}  \right\|_{L^2(\Omega)}^2    \\
			\leq		&  C \left(  (\Delta t)^3 \int_0^{t_1}\left(\left\|\partial_{t t} \boldsymbol{u}\right\|_{H^1(\Omega)}^2+\left\|\partial_{t t} \xi\right\|_{L^2(\Omega)}^2+\left\|\partial_{t t} p\right\|_{L^2(\Omega)}^2\right) d s \right. \\
			& \left. +\Delta t h^{2 k} \int_0^{t_1}\left(\left\|\partial_t \boldsymbol{u}\right\|_{H^{k+1}(\Omega)}^2+\left\|\partial_t \xi\right\|_{H^k(\Omega)}^2\right) d s+\Delta t h^{2 l+2} \int_0^{t_1}\left\|\partial_t p\right\|_{H^{l+1}(\Omega)}^2 d s \right).
		\end{aligned}
	\end{equation}
By combining \eqref{bound1}-\eqref{boud2} and utilizing the estimate in \eqref{last2inth1} to account for the error contribution from the initial step (at \( t = t_1 \)) of the solution in \eqref{lastinth1}, we arrive at \eqref{th1eq}, thereby completing the proof.
\end{proof}

\begin{rmk}
From the proof process, it is evident that in every step where errors are controlled using scaled inequalities, the Lam\'{e} constant consistently appears in the denominator, while \( c_m = \max(c_1, \dots, c_A) \) and \( \alpha_m = \max(\alpha_1, \dots, \alpha_A) \) appear in the numerator. Specifically, we derive 
$$
 C = C_{\lambda, c_m, \alpha_m} \leqslant \tilde{C} \left( 1 + c_m + \frac{1+\alpha_m^2}{\lambda} + \frac{1+\alpha_m^2}{\lambda^2} + \frac{1+\alpha_m^2}{\lambda^3} \right).
$$
Thus, as \( \lambda \to \infty \), \( c_m \to 0 \), and \( \alpha_m \to 0 \), the constant \(C\) in the error estimates does not grow. This demonstrates that the scheme is robust with respect to these parameters. A similar conclusion also holds in Theorem \ref{mainre}.
\end{rmk}

\begin{theorem}\label{mainre}
	Under the same assumptions in \cref{thm1}, let $(\boldsymbol{u},\xi,\boldsymbol{p})$ be solutions of equations \eqref{weakforu1}-\eqref{weakforu3}. For $ 1 \leqslant n \leqslant N $, $(\boldsymbol{u}_h^{n+1},\xi_h^{n+1},\boldsymbol{p}_h^{n+1})$ be solutions of discrete schemes \eqref{dis1}-\eqref{dis3},
	we have 
		\begin{align}
			&{\mu}  \left\| \varepsilon( {{e}_{\boldsymbol{u}}^{h,N+1} }) \right\|_{L^2(\Omega)}^2   + \sum_{j=1}^A {c_j}  \left\| e_{p_j}^{h,N+1} \right\|_{L^2(\Omega)}^2 +\frac{1}{\lambda}     \left\| \boldsymbol{\alpha} \! \cdot  \! \boldsymbol{e}_p^{h,N+1} \right\|_{L^2(\Omega)}^2 + \left\| e_{\xi}^{h,N+1}\right\|_{L^2(\Omega)}^2 \notag  \\
            &+  \Delta t  \sum_{n=1}^N \sum_{j=1}^A \kappa_j \left\|  \nabla e_{p_j}^{h,n+1} \right\|_{L^2(\Omega)}^2 + \frac{\Delta t}{2} \sum_{n=1}^N \sum_{i,j=1}^A \|  s_{j\leftarrow i}^{1/2} \left( e^{h,n+1}_{p_j}-e^{h,n+1}_{p_i} \right) \|_{L^2(\Omega)}^2 \label{th3eq}  \\ 
			\leqslant 
			& {C}\left[(\Delta t)^2 \int_0^T\left(\left\|\partial_{t t} \boldsymbol{u}\right\|_{H^1(\Omega)}^2+\left\|\partial_{t t} \xi\right\|_{L^2(\Omega)}^2+\left\|\partial_{t t} \boldsymbol{p} \right\|_{L^2(\Omega)}^2\right) d s\right. \notag \\
			& +h^{2 k} \int_0^T\left(\left\|\partial_t \boldsymbol{u}\right\|_{H^{k+1}(\Omega)}^2+\left\|\partial_t \xi\right\|_{H^k(\Omega)}^2\right) d s  \left.+h^{2 l+2}  \int_0^T\left\|\partial_t \boldsymbol{p} \right\|_{H^{l+1}(\Omega)}^2 d s\right]. \notag
		\end{align}
\end{theorem}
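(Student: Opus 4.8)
The plan is to derive \eqref{th3eq} from Theorem~\ref{thm1} by two discrete-in-time manipulations applied to the consecutive-increment estimate \eqref{th1eq}: a telescoping argument that converts the increments $D_{\boldsymbol{u}}^{n+1},D_{\xi}^{n+1},\boldsymbol{D}_p^{n+1}$ into the pointwise errors at the final level $N+1$, and a summation over all intermediate horizons that recovers the time-accumulated dissipation norms. The key observation is that the proof of Theorem~\ref{thm1} never used that $N$ is the last index: running exactly the same argument up to any step $m$ with $1\leqslant m\leqslant N$ yields \eqref{th1eq} with $N$ replaced by $m$ and with a constant $C$ independent of $m$, since the time integrals on the right only shrink and the inf-sup and Young constants are fixed. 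Write $CR$ for the right-hand side of \eqref{th1eq} and $C\widetilde{R}$ for the target right-hand side of \eqref{th3eq}; since each term of $R$ carries one extra factor $\Delta t$ compared with the matching term of $\widetilde{R}$ and $N\Delta t=T$, one has $N R=T\widetilde{R}$.

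\textbf{Errors at the final time.}
First I would treat the endpoint terms. From $e_{\boldsymbol{u}}^{h,N+1}=e_{\boldsymbol{u}}^{h,1}+\sum_{n=1}^{N}D_{\boldsymbol{u}}^{n+1}$, the triangle and Cauchy--Schwarz inequalities in the index $n$ give
\begin{equation*}
\mu\bigl\|\varepsilon(e_{\boldsymbol{u}}^{h,N+1})\bigr\|_{L^2(\Omega)}^2\leqslant 2\mu\bigl\|\varepsilon(e_{\boldsymbol{u}}^{h,1})\bigr\|_{L^2(\Omega)}^2+2\mu N\sum_{n=1}^{N}\bigl\|\varepsilon(D_{\boldsymbol{u}}^{n+1})\bigr\|_{L^2(\Omega)}^2\leqslant 2\mu\bigl\|\varepsilon(e_{\boldsymbol{u}}^{h,1})\bigr\|_{L^2(\Omega)}^2+2C\,N R ,
\end{equation*}
because $\mu\sum_{n}\|\varepsilon(D_{\boldsymbol{u}}^{n+1})\|_{L^2(\Omega)}^2$ is one of the quantities bounded by $CR$ in \eqref{th1eq}; the same telescoping with $D_{p_j}^{n+1}$ controls $\sum_j c_j\|e_{p_j}^{h,N+1}\|_{L^2(\Omega)}^2$. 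For $\|e_{\xi}^{h,N+1}\|_{L^2(\Omega)}^2$ I would use the inf-sup argument of \eqref{bound1}, applied to $e_{\xi}^{h,N+1}$ via the level-$(N+1)$ Stokes error equation \eqref{errorforu1} (so that $\tilde{\beta}\|e_{\xi}^{h,N+1}\|_{L^2(\Omega)}\leqslant 2\mu\|\varepsilon(e_{\boldsymbol{u}}^{h,N+1})\|_{L^2(\Omega)}$), together with the bound just obtained. For $\tfrac1\lambda\|\boldsymbol{\alpha}\!\cdot\!\boldsymbol{e}_p^{h,N+1}\|_{L^2(\Omega)}^2$ I would telescope $\boldsymbol{\alpha}\!\cdot\!\boldsymbol{D}_p^{n+1}$; since \eqref{th1eq} controls only $\|\boldsymbol{\alpha}\!\cdot\!\boldsymbol{D}_p^{n+1}-D_{\xi}^{n+1}\|_{L^2(\Omega)}$, I first split $\|\boldsymbol{\alpha}\!\cdot\!\boldsymbol{D}_p^{n+1}\|^2\leqslant 2\|\boldsymbol{\alpha}\!\cdot\!\boldsymbol{D}_p^{n+1}-D_{\xi}^{n+1}\|^2+2\|D_{\xi}^{n+1}\|^2$ and bound $\sum_n\|D_{\xi}^{n+1}\|^2$ through \eqref{bound1}, giving $\sum_n\|\boldsymbol{\alpha}\!\cdot\!\boldsymbol{D}_p^{n+1}\|_{L^2(\Omega)}^2\leqslant CR$.

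\textbf{Accumulated dissipation and initial step.}
Next, for the two time-summed dissipation norms in \eqref{th3eq}, I would apply the $m$-horizon form of \eqref{th1eq} for each $m=1,\dots,N$, keep only
\begin{equation*}
\Delta t\sum_{j=1}^{A}\frac{\kappa_j}{2}\bigl\|\nabla e_{p_j}^{h,m+1}\bigr\|_{L^2(\Omega)}^2+\Delta t\sum_{i,j=1}^{A}\frac{s_{j\leftarrow i}}{4}\bigl\|e_{p_j}^{h,m+1}-e_{p_i}^{h,m+1}\bigr\|_{L^2(\Omega)}^2\leqslant CR ,
\end{equation*}
and sum over $m$; using $N R=T\widetilde{R}$ this reproduces, up to fixed constants, the terms $\Delta t\sum_n\sum_j\kappa_j\|\nabla e_{p_j}^{h,n+1}\|^2$ and $\tfrac{\Delta t}{2}\sum_n\sum_{i,j}\|s_{j\leftarrow i}^{1/2}(e_{p_j}^{h,n+1}-e_{p_i}^{h,n+1})\|^2$. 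Finally I would invoke the initial-step estimate \eqref{last2inth1} together with \eqref{bound1}--\eqref{boud2} and a Poincar\'e inequality on $\boldsymbol{M}_h$ to bound every $e^{h,1}$-quantity produced above by $O\bigl((\Delta t)^3+\Delta t\,h^{2k}+\Delta t\,h^{2l+2}\bigr)$, which is dominated by $\widetilde{R}$; adding all contributions gives \eqref{th3eq}.

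\textbf{Main difficulty.}
The heart of the matter is organization rather than a new estimate: one must check that the constant in Theorem~\ref{thm1} is genuinely horizon-independent, so that the summation over $m$ is legitimate and loses exactly one power of $\Delta t$ --- precisely the gap between the $O((\Delta t)^3)$ of the increment estimate and the $O((\Delta t)^2)$ of the energy estimate --- and one must recover the individual norms $\|e_{\xi}^{h,N+1}\|$ and $\|\boldsymbol{\alpha}\!\cdot\!\boldsymbol{e}_p^{h,N+1}\|$ only after splitting $\|\boldsymbol{\alpha}\!\cdot\!\boldsymbol{D}_p^{n+1}-D_{\xi}^{n+1}\|$ via the inf-sup condition, since Theorem~\ref{thm1} does not bound them separately. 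An alternative would be a fresh energy estimate testing the parabolic error equation with the errors themselves instead of with their increments; I would avoid it here because the explicit stabilizer then generates a BDF2-type term whose absorption is delicate and not uniform in $\Delta t$, whereas the route above sidesteps it entirely.
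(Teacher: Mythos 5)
Your proposal is correct in its essentials, but it takes a genuinely different route from the paper — in fact, the paper's proof is exactly the ``alternative'' you describe and decline at the end: it performs a second energy estimate, testing the Stokes error equation with $e_\xi^{h,n+1}$ and the parabolic error equation with $\Delta t\, e_{p_j}^{h,n+1}$, which telescopes the quantities $\mu\|\varepsilon(e_{\boldsymbol{u}}^{h,n+1})\|^2$, $c_j\|e_{p_j}^{h,n+1}\|^2$, $\tfrac{1}{2\lambda}\|\boldsymbol{\alpha}\cdot\boldsymbol{e}_p^{h,n+1}-e_\xi^{h,n+1}\|^2$ and $\tfrac{L}{2}\|\boldsymbol{\alpha}\cdot\boldsymbol{e}_p^{h,n+1}\|^2$ directly, produces eleven remainder terms $E_1,\dots,E_{11}$, and closes with a discrete Gr\"onwall inequality. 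The stabilizer and time-lag terms you were worried about ($E_9$, $E_{10}$, $E_{11}$) are absorbed there precisely by feeding in the increment bounds of Theorem~\ref{thm1} on $\|\boldsymbol{\alpha}\cdot\boldsymbol{D}_p^{n}\|^2$ and $\|D_\xi^{n}\|^2$ — e.g.\ $E_{11}$ is bounded by $\sum_n\frac{CL^2}{\epsilon_2\Delta t}\|\boldsymbol{\alpha}\cdot\boldsymbol{D}_p^n\|^2$, and dividing the $O((\Delta t)^3)$ bound of \eqref{th1eq} by $\Delta t$ is where the paper loses the one power of $\Delta t$; so the absorption is in fact uniform in $\Delta t$ and not as delicate as you feared. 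Your route instead post-processes \eqref{th1eq} by telescoping plus Cauchy--Schwarz in $n$ (paying a factor $N$ for the endpoint norms) and by summing the $m$-horizon versions of \eqref{th1eq} (paying a factor $N$ for the accumulated dissipation), with $N\Delta t=T$ effecting the same conversion $(\Delta t)^3\to(\Delta t)^2$, $\Delta t\,h^{2k}\to h^{2k}$. This is legitimate: the constant in Theorem~\ref{thm1} is indeed horizon-independent, and your approach avoids Gr\"onwall altogether, giving a constant that grows only linearly in $T$ rather than exponentially; what it gives up is the sharper structure of the paper's left-hand side (e.g.\ the $\tfrac{L}{2}$-weighted total-pressure error), which is not needed for \eqref{th3eq}. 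One detail to fix: do not use a Poincar\'e inequality to control $\sum_j c_j\|e_{p_j}^{h,1}\|^2$ from $\Delta t\,\kappa_j\|\nabla e_{p_j}^{h,1}\|^2$ — that introduces a factor $c_j/(\Delta t\,\kappa_j)$ that destroys robustness as $\kappa_j\to 0$ or $\Delta t\to 0$. Instead, note that the initial-step energy argument (the one yielding \eqref{last2inth1}) naturally carries $\sum_j\tfrac{c_j}{2}\|D_{p_j}^{1}\|^2=\sum_j\tfrac{c_j}{2}\|e_{p_j}^{h,1}\|^2$ on its left-hand side, exactly as the paper's enhanced initial-step estimate inside the proof of Theorem~\ref{mainre} records.
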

\begin{proof}
	Selecting $\boldsymbol{v}=D_{\boldsymbol{u}}^{n+1}$ in \eqref{errorforu1}, $\phi=e_\xi^{h,n+1}$ in \eqref{errorforb2}, we obtain
		\begin{align}
&a_1(e_{\boldsymbol{u}}^{h,n+1},D_{\boldsymbol{u}}^{n+1})-b(D_{\boldsymbol{u}}^{n+1},e_\xi^{h,n+1})=0,\notag \\
			& b(D_{\boldsymbol{u}}^{n+1},e_\xi^{h,n+1})+a_2(D_\xi^{n+1},e_\xi^{h,n+1})-\left( \frac{1}{\lambda} \boldsymbol{\alpha} \! \cdot \! \boldsymbol{D}_p^n,e_\xi^{h,n+1} \right)\label{errorequationtotal2} \\
			& = b\left(\boldsymbol{u}^{n+1}  -  \boldsymbol{u}^n  -\Delta t \partial_t \boldsymbol{u}^{n+1},e_\xi^{h,n+1} \right) \notag \\
			& \quad - \left( \frac{1}{\lambda} \boldsymbol{\alpha} \! \cdot \! \left( \Pi_h^M \boldsymbol{p}^n- \Pi_h^M \boldsymbol{p}^{n-1} - \Delta t \partial_t  \boldsymbol{p}^{n+1} \right) ,e_\xi^{h,n+1} \right) \notag \\
			& \quad +a_2\left(\Pi_h^W\xi^{n+1}-\Pi_h^W \xi^{n}-\Delta t \partial_t \xi^{n+1},e_\xi^{h,n+1}\right). \notag
		\end{align}
	Setting $\psi=e_{p_j}^{h,n+1}\Delta t$ in \eqref{errorforp}, for $j=1,\cdots, A$, we have
		\begin{align*}
			&a_3\left(  {D_{p_j}^{h,n+1}} ,e_{p_j}^{h,n+1} \right)+ \left(   \frac{\alpha_j}{\lambda}  {\boldsymbol{\alpha} \! \cdot \!  \boldsymbol{D}_p^{n+1}}  ,e_{p_j}^{h,n+1} \right)  - \left(  \frac{\alpha_j}{\lambda} 
			D_\xi^{n} ,e_{p_j}^{h,n+1} \right)\\
			&+\left( L  \alpha_j     \boldsymbol{\alpha} \! \cdot \! \left( \boldsymbol{D}_p^{n+1} -\boldsymbol{D}_p^n \right) ,e_{p_j}^{h,n+1}  \right)+\Delta t d(e_{p_j}^{h,n+1},e_{p_j}^{h,n+1}  ) \\
			=& -a_3\left( \Delta t \partial_t p_j^{n+1}- \left( \Pi_h^{M_j}p_j^{n+1}-\Pi_h^{M_j}p_j^n\right) ,e_{p_j}^{h,n+1}   \right)\\
			&-\left(\frac{\alpha_j}{\lambda} \left( \Delta t \boldsymbol{\alpha} \! \cdot \! \partial_t \boldsymbol{p}^{n+1}-  \boldsymbol{\alpha} \! \cdot \! \left( \Pi_h^M \boldsymbol{p}^{n+1}- \Pi_h^M \boldsymbol{p}^{n}\right) \right), e_{p_j}^{h,n+1}   \right)\\
			& -\left(L \alpha_j \boldsymbol{\alpha} \! \cdot \!  \left( \Delta t \partial_t \boldsymbol{p}^{n+1}-     \Pi_h^{M}  \boldsymbol{p}^{n+1}+\Pi_h^M \boldsymbol{p}^n \right),e_{p_j}^{h,n+1}   \right)\\
			&+ \left( L \alpha_j \boldsymbol{\alpha} \! \cdot  \! \left(  \Delta t \partial_t \boldsymbol{p}^{n+1}-    \Pi_h^{M}  \boldsymbol{p}^{n}+ \Pi_h^M \boldsymbol{p}^{n-1} \right),e_{p_j}^{h,n+1}    \right) \\
			&+ \left(  \frac{\alpha_j}{\lambda} 
			\left( \Delta t \partial_t \xi^{n+1}- \Pi_h^W \xi^{n}+\Pi_h^W \xi^{n-1} \right),e_{p_j}^{h,n+1}  \right).
		\end{align*}
	Summing over $j=1,\cdots,A$, we obtain
	\begin{align}
		& \sum_{j=1}^A a_3\left(  {D_{p_j}^{h,n+1}} ,e_{p_j}^{h,n+1} \right)+ \left(   \frac{1}{\lambda}  {\boldsymbol{\alpha} \! \cdot \!  \boldsymbol{D}_p^{n+1}}  , \boldsymbol{\alpha} \! \cdot \! \boldsymbol{e}_{p}^{h,n+1} \right) - \left(  \frac{1}{\lambda} 
		D_\xi^{n} , \boldsymbol{\alpha}\! \cdot  \! \boldsymbol{e}_{p}^{h,n+1} \right) \notag \\
		&+\left( L       \boldsymbol{\alpha} \! \cdot \! \left( \boldsymbol{D}_p^{n+1} -\boldsymbol{D}_p^n \right) , \boldsymbol{\alpha} \! \cdot \! \boldsymbol{e}_{p}^{h,n+1}  \right) +\Delta t \sum_{j=1}^A d(e_{p_j}^{h,n+1},e_{p_j}^{h,n+1}  ) \label{errforp} \\
		=& - \sum_{j=1}^A a_3\left( \Delta t \partial_t p_j^{n+1}- \left( \Pi_h^{M_j}p_j^{n+1}-\Pi_h^{M_j}p_j^n\right) ,e_{p_j}^{h,n+1}   \right) \notag \\
		& -\left(\frac{1}{\lambda} \left( \Delta t \boldsymbol{\alpha} \! \cdot \!  \partial_t \boldsymbol{p}^{n+1}-  \boldsymbol{\alpha} \! \cdot \! \left( \Pi_h^M \boldsymbol{p}^{n+1}- \Pi_h^M \boldsymbol{p}^{n}\right) \right),\boldsymbol{\alpha} \! \cdot \! \boldsymbol{e}_{p}^{h,n+1}   \right)\notag \\
		& -\left(L  \boldsymbol{\alpha} \! \cdot \! \left( \Delta t \partial_t \boldsymbol{p}^{n+1}-     \Pi_h^{M}  \boldsymbol{p}^{n+1}+\Pi_h^M \boldsymbol{p}^n \right), \boldsymbol{\alpha } \! \cdot \! \boldsymbol{e}_{p}^{h,n+1}   \right) \notag \\
		&+ \left( L  \boldsymbol{\alpha} \! \cdot \! \left(  \Delta t \partial_t \boldsymbol{p}^{n+1}-    \Pi_h^{M}  \boldsymbol{p}^{n}+ \Pi_h^M \boldsymbol{p}^{n-1} \right), \boldsymbol{\alpha} \! \cdot \! \boldsymbol{e}_{p}^{h,n+1}    \right) \notag \\
		&+ \left(  \frac{1}{\lambda} 
		\left( \Delta t \partial_t \xi^{n+1}- \Pi_h^W \xi^{n}+\Pi_h^W \xi^{n-1} \right), \boldsymbol{\alpha} \! \cdot \! \boldsymbol{e}_{p}^{h,n+1}  \right).\notag
	\end{align}
	Summing equations \eqref{errorequationtotal2} and \eqref{errforp} up and then summing over the index $n$ from 1 to $N$, we have
		\begin{align}
			&\sum_{n=1}^N\left[a_1\left(e_{\boldsymbol{u}}^{h, n+1}, D_{\boldsymbol{u}}^{n+1}\right)+a_2\left(D_{\xi}^{n+1}, e_{\xi}^{h, n+1}\right)-\left( \frac{1}{\lambda} \boldsymbol{\alpha}\! \cdot \! \boldsymbol{D}_p^{n+1},e_\xi^{h,n+1} \right) \right.\notag \\
			& \left. + \sum_{j=1}^A a_3\left(  {D_{p_j}^{h,n+1}} ,e_{p_j}^{h,n+1} \right)  + \left(   \frac{1}{\lambda}  {\boldsymbol{\alpha} \! \cdot \!  \boldsymbol{D}_p^{n+1}}  , \boldsymbol{\alpha} \! \cdot \! \boldsymbol{e}_{p}^{h,n+1} \right) +\left( L       \boldsymbol{\alpha} \! \cdot  \! \boldsymbol{D}_p^{n+1}  , \boldsymbol{\alpha} \! \cdot \! \boldsymbol{e}_{p}^{h,n+1}  \right)\right. \label{errorall3}  \\
			&\left. - \left(  \frac{1}{\lambda} 
			D_\xi^{n+1} , \boldsymbol{\alpha} \! \cdot \! \boldsymbol{e}_{p}^{h,n+1} \right)+\Delta t \sum_{j=1}^A d\left(e_{p_j}^{h,n+1},e_{p_j}^{h,n+1}  \right)  \right]=\sum_{i=1}^{11}E_i. \notag
		\end{align}
	where
	\begin{align*} 
		& E_1=\sum_{n=1}^N b\left(\boldsymbol{u}^{n+1}-\boldsymbol{u}^n-\Delta t \partial_t \boldsymbol{u}^{n+1}, e_{\xi}^{h,n+1}\right), \\ 
		& E_2=\sum_{n=1}^N - \left( \frac{1}{\lambda} \boldsymbol{\alpha} \! \cdot \! \left( \Pi_h^M \boldsymbol{p}^n- \Pi_h^M \boldsymbol{p}^{n-1} - \Delta t \partial_t  \boldsymbol{p}^{n+1} \right) ,e_\xi^{h,n+1} \right),\\
		& E_3=\sum_{n=1}^N a_2\left(\Pi_h^W\xi^{n+1}-\Pi_h^W \xi^{n}-\Delta t \partial_t \xi^{n+1},e_\xi^{h,n+1}\right), \\ 
		& E_4=\sum_{n=1}^N  \sum_{j=1}^A a_3\left(  \Pi_h^{M_j}p_j^{n+1}-\Pi_h^{M_j}p_j^n-\Delta t \partial_t p_j^{n+1} ,e_{p_j}^{h,n+1}   \right), \\ 
		& E_5=\sum_{n=1}^N \frac{1}{\lambda}  \left(   \boldsymbol{\alpha} \! \cdot \! \left( \Pi_h^M \boldsymbol{p}^{n+1}- \Pi_h^M \boldsymbol{p}^{n}\right) -\Delta t \boldsymbol{\alpha} \! \cdot \!  \partial_t \boldsymbol{p}^{n+1},\boldsymbol{\alpha} \! \cdot \! \boldsymbol{e}_{p}^{h,n+1}   \right), \\ 
		& E_6=\sum_{n=1}^N \left(  \frac{1}{\lambda} 
		\left( \Delta t \partial_t \xi^{n+1}- \Pi_h^W \xi^{n}+\Pi_h^W \xi^{n-1} \right), \boldsymbol{\alpha} \! \cdot \! \boldsymbol{e}_{p}^{h,n+1}  \right),\\
		& E_7=\sum_{n=1}^N  \left(L  \boldsymbol{\alpha} \!\cdot \!  \left(      \Pi_h^{M}  \boldsymbol{p}^{n+1}-\Pi_h^M \boldsymbol{p}^n -\Delta t \partial_t \boldsymbol{p}^{n+1} \right), \boldsymbol{\alpha } \! \cdot \! \boldsymbol{e}_{p}^{h,n+1}   \right),\\
		& E_8=\sum_{n=1}^N  \left( L  \boldsymbol{\alpha} \! \cdot \! \left(  \Delta t \partial_t \boldsymbol{p}^{n+1}-    \Pi_h^{M}  \boldsymbol{p}^{n}+ \Pi_h^M \boldsymbol{p}^{n-1} \right), \boldsymbol{\alpha} \! \cdot \! \boldsymbol{e}_{p}^{h,n+1}    \right),\\
		&E_9=\sum_{n=1}^N\left( \frac{1}{\lambda} \boldsymbol{\alpha} \! \cdot \! \left( \boldsymbol{D}_p^n -\boldsymbol{D}_p^{n+1} \right)  ,e_\xi^{h,n+1} \right),\\
		& E_{10}=\sum_{n=1}^N \left(  \frac{1}{\lambda} 
		\left(	D_\xi^{n} -D_\xi^{n+1} \right), \boldsymbol{\alpha} \! \cdot \! \boldsymbol{e}_{p}^{h,n+1} \right),\\
		& E_{11}=\sum_{n=1}^N L \left(  \boldsymbol{\alpha} \! \cdot \! \boldsymbol{D}_p^{n}, \boldsymbol{\alpha} \! \cdot \! \boldsymbol{e}_{p}^{h,n+1}\right).\\
	\end{align*}
	Noting the following identity,
		\begin{align*}
			&	\frac{1}{\lambda} \left(  \boldsymbol{\alpha} \! \cdot \! \boldsymbol{D}_p^{n+1}\! -\!D_\xi^{n+1}, \boldsymbol{\alpha} \! \cdot \! \boldsymbol{e}_p^{h,n+1} \! -\! e_\xi^{h,n+1}  \right)\\
			=&\frac{1}{\lambda} \left( \! \boldsymbol{\alpha} \! \cdot \! \boldsymbol{D}_p^{n+1}, \boldsymbol{\alpha} \! \cdot \! \boldsymbol{e}_p^{n+1} \right)-\frac{1}{\lambda} \left( \boldsymbol{\alpha} \! \cdot \! \boldsymbol{D}_p^{n+1},e_\xi^{h,n+1}\right)-\frac{1}{\lambda} \left( D_\xi^{n+1}, \boldsymbol{\alpha} \! \cdot \! \boldsymbol{e}_p^{h,n+1}\right)+\frac{1}{\lambda} \left( D_\xi^{n+1},e_\xi^{h,n+1} \right).
		\end{align*}
	Using the definitions of $a_2(\cdot,\cdot)$ and $a_3(\cdot,\cdot)$, we can rewrite the left-hand side of \eqref{errorall3} as follows,
		\begin{align}
			& \sum_{n=1}^N \! \left[ \! \mu \! \left( \!\left\|\varepsilon(e_{\boldsymbol{u}}^{h,n+1})\right\|_{L^2(\Omega)}^2 \!- \! \left\|\varepsilon (e_{\boldsymbol{u}}^{h,n})\right\|_{L^2(\Omega)}^2     \!  \right) \! +\! \sum_{j=1}^A \! \frac{c_j}{2} \! \left( \left\| e_{p_j}^{h,n+1} \right\|_{L^2(\Omega)}^2-\left\| e_{p_j}^{h,n} \right\|_{L^2(\Omega)}^2 \! \right) \right. \notag \\
			& \left. +\frac{1}{2\lambda}   \left(  \left\| \boldsymbol{\alpha} \! \cdot \! \boldsymbol{e}_p^{h,n+1} \! -\! e_{\xi}^{h,n+1}\right\|_{L^2(\Omega)}^2  \!  - \! \left\|  \boldsymbol{\alpha} \! \cdot \! \boldsymbol{e}_p^{h,n} \! -\! e_{\xi}^{h,n}\right\|_{L^2(\Omega)}^2      \right) \right. \label{errorall4} \\
			& \left.	 +\frac{L}{2}   \left( \left\| \boldsymbol{\alpha} \! \cdot \! \boldsymbol{e}_p^{h,n+1}\right\|_{L^2(\Omega)}^2 \! -\! \left\|  \boldsymbol{\alpha} \! \cdot \! \boldsymbol{e}_p^{h,n}\right\|_{L^2(\Omega)}^2    \right) \! + \! \Delta t \sum_{j=1}^A d\left(e_{p_j}^{h,n+1},e_{p_j}^{h,n+1}  \right) \right]  \leq \sum_{i=1}^{11} E_i.\notag
		\end{align}
	We proceed by estimating the terms \( E_i \) for \( i = 1, 2, \ldots, 11 \). Using the definition of \( b(\cdot, \cdot) \), combined with the Cauchy-Schwarz and Young's inequalities, as well as the properties of the projection operators, we derive the following bounds for \( E_1 \) to \( E_3 \) for any \( \epsilon_1 > 0 \).
	\begin{align*}
		E_1 
		& \leq \frac{\epsilon_1}{3}\Delta t \sum_{n=1}^N\left\|e_{\xi}^{h,n+1}\right\|_{L^2(\Omega)}^2+\frac{C}{\epsilon_1}(\Delta t)^2 \int_0^T\left\|\partial_{t t} \boldsymbol{u}\right\|_{H^1(\Omega)}^2 d s.
	\end{align*}
	For $E_2$, we can obtain
	\begin{align*}
		E_2
		= & \sum_{n=1}^N \frac{1}{\lambda} \int_{\Omega} e_{\xi}^{h,n+1}  \boldsymbol{\alpha} \cdot \left(\Delta t \partial_t \boldsymbol{p}^{n+1}-\Delta t \partial_t \boldsymbol{p}^n\right)\\
		&+\sum_{n=1}^N \frac{1}{\lambda} \int_{\Omega} e_{\xi}^{h,n+1} \boldsymbol{\alpha} \cdot \left(\Delta t \partial_t \boldsymbol{p}^n-\Pi_h^M \boldsymbol{p}^n+ \Pi_h^M \boldsymbol{p}^{n-1}\right) \\
		\leq & \frac{\epsilon_1}{3}\Delta t \sum_{n=1}^N\left\|e_{\xi}^{h,n+1}\right\|_{L^2(\Omega)}^2+\frac{C \alpha_m^2}{\epsilon_1 \lambda^2 \Delta t} \sum_{n=1}^N\left(\left\|\Delta t \partial_t \boldsymbol{p}^{n+1}-\Delta t \partial_t \boldsymbol{p}^n\right\|_{L^2(\Omega)}^2\right. \\
		& \left.+\left\|\boldsymbol{p}^n-\boldsymbol{p}^{n-1}-\Delta t \partial_t \boldsymbol{p}^n\right\|_{L^2(\Omega)}^2+\left\| \Pi_h^M \left(\boldsymbol{p}^n-\boldsymbol{p}^{n-1}\right)-\left(\boldsymbol{p}^n-\boldsymbol{p}^{n-1}\right)\right\|_{L^2(\Omega)}^2\right)\\
		\leq& \frac{\epsilon_1}{3}\Delta t  \! \sum_{n=1}^N\left\|e_{\xi}^{h,n+1}\right\|_{L^2(\Omega)}^2 \! +\! \frac{C \alpha_m^2}{\epsilon_1 \lambda^2}\left[(\Delta t)^2 \! \! \int_0^T \! \! \left\|\partial_{t t} \boldsymbol{p}\right\|_{L^2(\Omega)}^2 d s  +h^{2 l+2} \! \! \int_0^T \! \! \left\|\partial_t \boldsymbol{p} \right\|_{H^{l+1}(\Omega)}^2 d s \! \right].
	\end{align*}
	And for $E_3$, 
	\begin{align*} E_3 \leq & \frac{\epsilon_1}{3} \Delta t \sum_{n=1}^N\left\|e_{\xi}^{h,n+1}\right\|_{L^2(\Omega)}^2+\frac{C}{\epsilon_1 \lambda^2}\left[(\Delta t)^2 \int_0^T \! \! \left\|\partial_{t t} \xi\right\|_{L^2(\Omega)}^2 d s \right.\\
		&\left.+h^{2 k}  \int_0^T \! \! \left(\left\|\partial_t \boldsymbol{u}\right\|_{H^{k+1}(\Omega)}^2+\left\|\partial_t \xi\right\|_{H^k(\Omega)}^2\right) d s\right].
	\end{align*}
	Similarly, by using the definition of \(a_3(\cdot, \cdot)\), we have
	\begin{align*}
		E_4
		\leq &\sum_{n=1}^N \! \Delta t \! \left( \sum_{j=1}^A\frac{c_j}{2}  \left\|e_p^{h,n+1}\right\|_{L^2(\Omega)}^2 \! \right) \! +\! \frac{Cc_m}{2}\! \left[ \! (\Delta t)^2 \! \int_0^T \! \! \left\|\partial_{t t} \boldsymbol{p} \right\|_{L^2}^2 d s \! + \! h^{2 l+2} \!\int_0^T \! \! \left\|\partial_t \boldsymbol{p}\right\|_{H^{l+1}}^2 d s \! \right].
	\end{align*}
	For any \( \epsilon_2 > 0 \), we obtain the following bound for the term \( E_5 \),
	\begin{align*}
		E_5=&\sum_{n=1}^N \frac{1}{\lambda}  \left(   \boldsymbol{\alpha} \! \cdot \! \left( \Pi_h^M \boldsymbol{p}^{n+1}- \Pi_h^M \boldsymbol{p}^{n}\right) -\Delta t \boldsymbol{\alpha} \! \cdot \! \partial_t \boldsymbol{p}^{n+1},\boldsymbol{\alpha} \! \cdot \! \boldsymbol{e}_{p}^{h,n+1}   \right)\\
		\leq &\frac{\epsilon_2 }{10} \Delta t \sum_{n=1}^N\left\| \boldsymbol{\alpha}\! \cdot \! \boldsymbol{e}_p^{h,n+1}\right\|_{L^2(\Omega)}^2  +C \frac{\alpha_m^2}{\epsilon_2 \lambda^2} \left[(\Delta t)^2 \int_0^T \! \! \left\|\partial_{t t}  \boldsymbol{p} \right\|_{L^2(\Omega)}^2 d s \right. \\
		 & \left. +h^{2 l+2} \int_0^T \! \! \left\|\partial_t \boldsymbol{p} \right\|_{H^{l+1}(\Omega)}^2 d s\right].
	\end{align*}
	Using the same \( \epsilon_2 \), we can bound \( E_6 \):
	\begin{align*}
		E_6
		\leq & \frac{\epsilon_2 }{10} \Delta t \sum_{n=1}^N\left\| \boldsymbol{\alpha} \! \cdot \! \boldsymbol{e}_p^{h,n+1}\right\|_{L^2(\Omega)}^2+\frac{C}{\epsilon_2 \lambda^2}\left[(\Delta t)^2 \int_0^T\left\|\partial_{t t} \xi\right\|_{L^2(\Omega)}^2 d s \right. \\
		&\left. +h^{2 k}  \int_0^T\left(\left\|\partial_t \boldsymbol{u}\right\|_{H^{k+1}(\Omega)}^2+\left\|\partial_t \xi\right\|_{H^k(\Omega)}^2\right) d s\right].
	\end{align*}
	$E_7$ and $E_8$ can be bounded as follows:
	\begin{align*}
		E_7
		=&\sum_{n=1}^N  \left(L  \boldsymbol{\alpha} \! \cdot \!  \left(      \Pi_h^{M}  \boldsymbol{p}^{n+1}-\Pi_h^M \boldsymbol{p}^n -\Delta t \partial_t \boldsymbol{p}^{n+1} \right), \boldsymbol{\alpha } \! \cdot \! \boldsymbol{e}_{p}^{h,n+1}   \right)\\
		\leq &C\frac{L^2 \alpha_m^2}{\epsilon_2 }\left[(\Delta t)^2 \int_0^T\left\|\partial_{t t} \boldsymbol{p} \right\|_{L^2(\Omega)}^2 d s+h^{2 l+2}  \int_0^T\left\|\partial_t  \boldsymbol{p} \right\|_{H^{l+1}(\Omega)}^2 d s\right]\\
		&+ \frac{\epsilon_2 }{10} \Delta t \sum_{n=1}^N\left\| \boldsymbol{\alpha} \! \cdot  \! \boldsymbol{e}_p^{h,n+1}\right\|_{L^2(\Omega)}^2 .\\
		E_8
		=&\sum_{n=1}^N  L\left( \boldsymbol{\alpha} \! \cdot \! \left( \Delta t \partial_t \boldsymbol{p}^{n+1}-\Delta t \partial_t \boldsymbol{p}^{n}\right), \boldsymbol{\alpha} \! \cdot \! \boldsymbol{e}_p^{h,n+1} \right) \\
		&+ L\left( \boldsymbol{\alpha} \! \cdot \! \Delta t \partial_t \boldsymbol{p}^{n}- \boldsymbol{\alpha} \cdot \left({\Pi_h^M \boldsymbol{p}^{n}- \Pi_h^M \boldsymbol{p}^{n-1}}\right), \boldsymbol{\alpha} \! \cdot  \! \boldsymbol{e}_p^{h,n+1} \right)\\
		\leq 
		&C\frac{L^2 \alpha_m^2}{\epsilon_2 }\left[(\Delta t)^2 \int_0^T\left\|\partial_{t t} \boldsymbol{p} \right\|_{L^2(\Omega)}^2 d s+h^{2 l+2} \int_0^T\left\|\partial_t \boldsymbol{p} \right\|_{H^{l+1}(\Omega)}^2 d s\right]\\
		&+ \frac{\epsilon_2 }{10}\Delta t \sum_{n=1}^N\left\| \boldsymbol{\alpha} \! \cdot \! \boldsymbol{e}_p^{h,n+1}\right\|_{L^2(\Omega)}^2 .
	\end{align*}
	We then estimate the term \(E_9\) for any \(\epsilon_3 > 0\),
	\begin{align*}
		E_9=&\sum_{n=1}^N\left( \frac{1}{\lambda} \boldsymbol{\alpha} \! \cdot\! \left( \boldsymbol{D}_p^n -\boldsymbol{D}_p^{n+1} \right)  ,e_\xi^{h,n+1} \right)\\
		=&\sum_{n=1}^N \left( -\frac{1}{\lambda}\left( \boldsymbol{\alpha} \!
		\cdot \! \boldsymbol{D}_p^{n+1}, e_{\xi}^{h,n+1}\right)+ \frac{1}{\lambda}\left( \boldsymbol{\alpha} \! \cdot \! \boldsymbol{D}_p^{n}, e_{\xi}^{h,n}\right) +\frac{1}{\lambda}\left(\boldsymbol{\alpha} 
		\! \cdot \! \boldsymbol{D}_p^{n}, D_{\xi}^{n+1}\right) \right)\\
		=& -\frac{1}{\lambda}\left( \boldsymbol{\alpha} \!
		\cdot \! \boldsymbol{D}_p^{N+1}, e_{\xi}^{h,N+1}\right)+ \frac{1}{\lambda}\left( \boldsymbol{\alpha} \! \cdot \! \boldsymbol{D}_p^{1}, e_{\xi}^{h,1}\right) +\sum_{n=1}^N \frac{1}{\lambda}\left(\boldsymbol{\alpha} 
		\! \cdot \! \boldsymbol{D}_p^{n}, D_{\xi}^{n+1}\right)  \\
		\leq &\frac{\epsilon_3}{2} \left\|  e_{\xi}^{h,N+1}\right\|_{L^2(\Omega)}^2+\frac{1}{2\epsilon_3 \lambda^2} \left\| \boldsymbol{\alpha} \! \cdot \! \boldsymbol{D}_{p}^{N+1}\right\|_{L^2(\Omega)}^2+\frac{1}{2\lambda} \left\| \boldsymbol{\alpha} \! \cdot \!   \boldsymbol{D}_p^1  \right\|_{L^2(\Omega)}^2+\frac{1}{2\lambda} \left\|   e_\xi^{h,1}  \right\|_{L^2(\Omega)}^2\\
		&+\sum_{n=1}^N \left( \frac{1}{2\lambda}  \left\| \boldsymbol{\alpha} \! \cdot \! \boldsymbol{D}_p^{n} \right\|_{L^2(\Omega)}^2+\frac{1}{2\lambda}  \left\|  D_\xi^{n+1} \right\|_{L^2(\Omega)}^2   \right).
	\end{align*}
	Similarly, we bound $E_{10}$ with any $\epsilon_4>0$,
	\begin{align*}
		E_{10}
		\leq &\frac{\epsilon_4 }{2} \left\|  \boldsymbol{\alpha} \! \cdot \! \boldsymbol{e}_{p}^{h,N+1}\right\|_{L^2(\Omega)}^2+\frac{1}{2\epsilon_4 \lambda^2} \left\|  D_{\xi}^{N+1}\right\|_{L^2(\Omega)}^2+\frac{1}{2\lambda} \left\|   D_\xi^1  \right\|_{L^2(\Omega)}^2+\frac{1}{2\lambda} \left\|  \boldsymbol{\alpha} \! \cdot \! \boldsymbol{e}_p^{h,1}  \right\|_{L^2(\Omega)}^2\\
		&+\sum_{n=1}^N \left( \frac{1}{2\lambda}  \left\|  \boldsymbol{\alpha} 
		\! \cdot \! \boldsymbol{D}_p^{n+1} \right\|_{L^2(\Omega)}^2+\frac{1}{2\lambda}  \left\|  D_\xi^{n} \right\|_{L^2(\Omega)}^2   \right),
	\end{align*}
	and for the last one $E_{11}$, we consider the same $\epsilon_2$,
	\begin{align*}
		E_{11}
		\leq & \sum_{n=1}^N \frac{CL^2}{\epsilon_2  \Delta t} \left\| \boldsymbol{\alpha} \! \cdot \! \boldsymbol{D}_p^n \right\|_{L^2(\Omega)}^2+\frac{\epsilon_2  }{10}\Delta t \sum_{n=1}^N  \left\|  \boldsymbol{\alpha} \! \cdot \! \boldsymbol{e}_p^{h,n+1} \right\|_{L^2(\Omega)}^2.
	\end{align*}
	
	By using the bounds of $E_1$ to $E_{11}$ in \eqref{errorall4}, we have
	\begin{align} 
		& \mu \! \left( \! \left\|\varepsilon(e_{\boldsymbol{u}}^{h,N+1})\right\|_{L^2(\Omega)}^2 \! -\! \left\|\varepsilon (e_{\boldsymbol{u}}^{h,1})\right\|_{L^2(\Omega)}^2       \right)\! +\! \sum_{j=1}^A\!  \frac{c_j}{2} \!\left(\! \left\|  e_{p_j}^{h,N+1} \right\|_{L^2(\Omega)}^2 \! -\! \left\| e_{p_j}^{h,1} \right\|_{L^2(\Omega)}^2 \! \right) \! \notag \\ 
		& \! + \! \frac{L}{2} \!  \left( \left\| \boldsymbol{\alpha} \! \cdot \! \boldsymbol{e}_p^{h,N+1}\right\|_{L^2(\Omega)}^2 \! -\! \left\|  \boldsymbol{\alpha} \! \cdot \! \boldsymbol{e}_p^{h,1}\right\|_{L^2(\Omega)}^2    \right) \!+\! \frac{1}{2\lambda}   \left(  \left\| \boldsymbol{\alpha} \! \cdot \! \boldsymbol{e}_p^{h,N+1} \!  - \! e_{\xi}^{h,N+1}\right\|_{L^2(\Omega)}^2\right. \label{errorall5} \\
		& \left.  - \left\|  \boldsymbol{\alpha}\! \cdot \! \boldsymbol{e}_p^{h,1}-e_{\xi}^{h,1}\right\|_{L^2(\Omega)}^2   \!   \right)  + \Delta t\sum_{n=1}^N \sum_{j=1}^A d\left(e_{p_j}^{h,n+1},e_{p_j}^{h,n+1}  \right) \notag \\
		\leq & {\epsilon_1}\Delta t \sum_{n=1}^N\left\|e_{\xi}^{h,n+1}\right\|_{L^2(\Omega)}^2+\sum_{n=1}^N \Delta t  \left( \! \sum_{j=1}^A \! \frac{c_j}{2}  \left\|e_p^{h,n+1}\right\|_{L^2(\Omega)}^2 \! \right) \!+\! \frac{\epsilon_2  }{2}\Delta t \sum_{n=1}^N \! \left\|  \boldsymbol{\alpha} \! \cdot \! \boldsymbol{e}_p^{h,n+1} \! \right\|_{L^2(\Omega)}^2  \notag \\
		&+ \frac{\epsilon_3}{2}\! \left\|  e_{\xi}^{h,N+1}\right\|_{L^2(\Omega)}^2\!+\! \frac{1}{2\epsilon_3 \lambda^2} \left\| \boldsymbol{\alpha} \! \cdot  \! \boldsymbol{D}_{p}^{N+1}\right\|_{L^2(\Omega)}^2 \! +\! \frac{1}{2\lambda} \left\| \boldsymbol{\alpha} \!\cdot \!   \boldsymbol{D}_p^1  \right\|_{L^2(\Omega)}^2 \!+\! \frac{1}{2\lambda} \left\|   e_\xi^{h,1}  \right\|_{L^2(\Omega)}^2 \notag \\
		&+	\! \frac{\epsilon_4 }{2} \left\|  \boldsymbol{\alpha} \! \cdot \! \boldsymbol{e}_{p}^{h,N+1}\right\|_{L^2(\Omega)}^2\!+\! \frac{1}{2\epsilon_4 \lambda^2} \left\|  D_{\xi}^{N+1}\right\|_{L^2(\Omega)}^2\!+\! \frac{1}{2\lambda} \left\|   D_\xi^1  \right\|_{L^2(\Omega)}^2\!+\! \frac{1}{2\lambda} \left\|  \boldsymbol{\alpha} \! \cdot \! \boldsymbol{e}_p^{h,1} \! \right\|_{L^2(\Omega)}^2 \notag \\
		&+\sum_{n=1}^N \left( \frac{1}{\lambda}  \left\|  \boldsymbol{\alpha} 
		\! \cdot \! \boldsymbol{D}_p^{n+1} \right\|_{L^2(\Omega)}^2+\frac{1}{\lambda}  \left\|  D_\xi^{n+1} \right\|_{L^2(\Omega)}^2   \right) +\sum_{n=1}^N \frac{CL^2}{\epsilon_2  \Delta t} \left\| \boldsymbol{\alpha} \! \cdot \! \boldsymbol{D}_p^n \right\|_{L^2(\Omega)}^2 \notag \\
		&+ {C}\left[(\Delta t)^2 \int_0^T\left(\left\|\partial_{t t} \boldsymbol{u}\right\|_{H^1(\Omega)}^2+\left\|\partial_{t t} \xi\right\|_{L^2(\Omega)}^2+\left\|\partial_{t t} \boldsymbol{p} \right\|_{L^2(\Omega)}^2\right) d s\right. \notag \\
		& +h^{2 k}  \int_0^T\left(\left\|\partial_t \boldsymbol{u}\right\|_{H^{k+1}(\Omega)}^2+\left\|\partial_t \xi\right\|_{H^k(\Omega)}^2\right) d s  \left.+h^{2 l+2}  \int_0^T\left\|\partial_t  \boldsymbol{p} \right\|_{H^{l+1}(\Omega)}^2 d s\right]. \notag
	\end{align}
	Similarly, applying the inf-sup condition yields
	\begin{equation}\label{boundxi}
	\tilde{\beta} \! \left\|e_{\xi}^{h,n+1}\right\|_{L^2(\Omega)} \leq \sup _{\boldsymbol{v} \in {\boldsymbol{V}}_h} \frac{b\! \left(\! \boldsymbol{v}, e_{\xi}^{h,n+1} \! \right)}{\left\|\boldsymbol{v}\right\|_{H^1(\Omega)}}\!= \! \sup _{\boldsymbol{v} \in \boldsymbol{V}_h} \frac{a_1 \! \left(e_{\boldsymbol{u}}^{h,n+1}, \boldsymbol{v}\right)}{\left\|\boldsymbol{v}\right\|_{H^1(\Omega)}} \leq 2 \mu \left\|\varepsilon \! \left(\! e_{\boldsymbol{u}}^{h,n+1}\! \right)\right\|_{L^2(\Omega)},
	\end{equation}
	which directly implies
	\begin{equation}\label{boundp}
		\begin{aligned}
			\frac{1}{2}\left\| \boldsymbol{\alpha} \! \cdot \! \boldsymbol{e}_p^{h,n+1}\right\|_{L^2(\Omega)}^2 & \! \leq \! \left\|  \boldsymbol{\alpha} \! \cdot \! \boldsymbol{e}_p^{h,n+1} \!- \! {e}_{\xi}^{h,n+1}\right\|_{L^2(\Omega)}^2+\left\|e_{\xi}^{h,n+1}\right\|_{L^2(\Omega)}^2 \\
			& \leq\left\| \boldsymbol{\alpha} \! \cdot \! \boldsymbol{e}_p^{h,n+1}\!- \! e_{\xi}^{h,n+1}\right\|_{L^2(\Omega)}^2+ \left( \frac{2 \mu }{\tilde{\beta}}\right)^2 \left\|\varepsilon \! \left(e_{\boldsymbol{u}}^{h,n+1}\right)\right\|_{L^2(\Omega)}^2.
		\end{aligned}
	\end{equation}
	Thus, we determine coefficient $\epsilon_1=\frac{\tilde{\beta}^2}{16\mu }$, such that
	$$ {\epsilon_1} \left\|e_{\xi}^{h,n+1}\right\|_{L^2(\Omega)}^2 \leq \frac{\mu}{4}\left\|\varepsilon \! \left(e_{\boldsymbol{u}}^{h,n+1}\right)\right\|^2_{L^2(\Omega)}.$$
	Determine coefficient $\epsilon_2=\min \left\{\frac{1}{4 \lambda}, \frac{\tilde{\beta}^2}{16  \mu}\right\}$, such that
	$$ \frac{\epsilon_2 }{2} \left\| \boldsymbol{\alpha} \! \cdot \! \boldsymbol{e}_p^{h,n+1}\right\|_{L^2(\Omega)}^2 \leq \frac{\mu}{4}\left\|\varepsilon \! \left(e_u^{h,n+1}\right)\right\|_{L^2(\Omega)}^2+\frac{1}{4 \lambda}\left\| \boldsymbol{\alpha} \! \cdot \! \boldsymbol{e}_p^{h,n+1}-e_{\xi}^{h,n+1}\right\|_{L^2(\Omega)}^2.$$
	Taking $\epsilon_3=\frac{\tilde{\beta}^2}{16\mu }$, such that
	$$ \frac{\epsilon_3}{2} \left\|e_{\xi}^{h,N+1}\right\|_{L^2(\Omega)}^2 \leq \frac{\mu}{4}\left\|\varepsilon \! \left(e_{\boldsymbol{u}}^{h,N+1}\right)\right\|^2_{L^2(\Omega)}.$$
	Similarly, taking $\epsilon_4=\min \left\{\frac{1}{4 \lambda}, \frac{\tilde{\beta}^2}{16  \mu}\right\}$
	such that
	$$\frac{\epsilon_4 }{2} \left\| \boldsymbol{\alpha}\! \cdot \! \boldsymbol{e}_{p}^{h,N+1}\right\|_{L^2(\Omega)}^2\! \leq \frac{\mu}{4} \! \left\|\varepsilon \! \left(e_u^{h,N+1}\right)\right\|_{L^2(\Omega)}^2+\frac{1}{4 \lambda}\left\| \boldsymbol{\alpha} \! \cdot \! \boldsymbol{e}_p^{h,N+1}-e_{\xi}^{h,N+1}\right\|_{L^2(\Omega)}^2.$$
	Also, terms $\| \boldsymbol{\alpha} \! \cdot \! \boldsymbol{D}_p^{n} \|_{L^2(\Omega)}^2$ and $\| D_\xi^n \|_{L^2(\Omega)}^2$ can be bounded by \eqref{th1eq}. 
	Regarding the error contributed by the first time step, we obtain the following from the a priori analysis for the coupled scheme at the initial time:
		\begin{align*}
			& \mu \left\|\varepsilon\left(e_{\boldsymbol{u}}^{h, 1}\right)\right\|_{L^2(\Omega)}^2+ \frac{1}{2\lambda} \left\|   \boldsymbol{\alpha} \! \cdot \! \boldsymbol{e}_p^{h,1}-e_\xi^{h,1} \right\|_{L^2(\Omega)}^2+\sum_{j=1}^A \frac{c_j}{2}  \left\| e_{p_j}^{h,1} \right\|_{L^2(\Omega)}^2\\
			&+ \Delta t \sum_{j=1}^A   { \kappa_j } \left\|  \nabla e_{p_j}^{h,1} \right\|^2_{L^2(\Omega)}    	+\Delta t  \sum_{i,j=1}^A  \frac{s_{j\leftarrow i}}{2} \left\|  e^{h,1}_{p_j}-e_{p_i}^{h,1}  \right\|_{L^2(\Omega)}^2   \\
			\leq	& C \left( (\Delta t)^2 \int_0^{t_1}\left(\left\|\partial_{t t} \boldsymbol{u}\right\|_{H^1(\Omega)}^2+\left\|\partial_{t t} \xi\right\|_{L^2(\Omega)}^2+\left\|\partial_{t t} \boldsymbol{p} \right\|_{L^2(\Omega)}^2\right) d s \right. \\
			& \left. +h^{2 k} \int_0^{t_1}\left(\left\|\partial_t \boldsymbol{u}\right\|_{H^{k+1}(\Omega)}^2+\left\|\partial_t \xi\right\|_{H^k(\Omega)}^2\right) d s+h^{2 l+2} \int_0^{t_1}\left\|\partial_t  \boldsymbol{p} \right\|_{H^{l+1}(\Omega)}^2 d s \right).
		\end{align*}
	Thus, by \eqref{errorall5}, we have
		\begin{align}
			& \frac{\mu}{2}  \left\|\varepsilon(e_{\boldsymbol{u}}^{h,N+1})\right\|_{L^2(\Omega)}^2   + \sum_{j=1}^A \frac{c_j}{2}  \left\| e_{p_j}^{h,N+1} \right\|_{L^2(\Omega)}^2 +\frac{L}{2}    \left\| \boldsymbol{\alpha} \! \cdot \! \boldsymbol{e}_p^{h,N+1}\right\|_{L^2(\Omega)}^2 \notag  \\
			& +\frac{1}{4\lambda}     \left\| \boldsymbol{\alpha} \! \cdot \! \boldsymbol{e}_p^{h,N+1} \! - \! e_{\xi}^{h,N+1}\right\|_{L^2(\Omega)}^2  \!+\!  \Delta t  \sum_{n=1}^N \sum_{j=1}^A \kappa_j \left\|  \nabla e_{p_j}^{h,n+1} \right\|_{L^2(\Omega)}^2 \label{errorall6} \\
			& +\frac{\Delta t}{2} \sum_{n=1}^N \sum_{i,j=1}^A \|  s_{j\leftarrow i}^{1/2} \left( e^{h,n+1}_{p_j}-e^{h,n+1}_{p_i} \right) \|_{L^2(\Omega)}^2 \notag \\
			\leq & \Delta t\sum_{n=1}^N \! \left( \! \frac{\mu}{2}\left\|\varepsilon \! \left(e_u^{h,n+1}\right)\right\|_{L^2(\Omega)}^2\! + \! \frac{1}{4 \lambda}\left\| \boldsymbol{\alpha} \! \cdot \! \boldsymbol{e}_p^{h,n+1} \!-\! e_{\xi}^{h,n+1}\right\|_{L^2(\Omega)}^2 \! +\! \sum_{j=1}^A \frac{c_j}{2}  \! \left\| e_{p_j}^{h,N+1} \right\|_{L^2(\Omega)}^2   \! \right) \notag \\
			&+ {C}\left[(\Delta t)^2 \int_0^T\left(\left\|\partial_{t t} \boldsymbol{u}\right\|_{H^1(\Omega)}^2+\left\|\partial_{t t} \xi\right\|_{L^2(\Omega)}^2+\left\|\partial_{t t} \boldsymbol{p} \right\|_{L^2(\Omega)}^2\right) d s\right.\notag \\
			& +h^{2 k} \int_0^T\left(\left\|\partial_t \boldsymbol{u}\right\|_{H^{k+1}(\Omega)}^2+\left\|\partial_t \xi\right\|_{H^k(\Omega)}^2\right) d s  \left.+h^{2 l+2}  \int_0^T\left\|\partial_t \boldsymbol{p} \right\|_{H^{l+1}(\Omega)}^2 d s\right].\notag
		\end{align}
	Finally, we apply the discrete Gr\"{o}nwall’s inequality in \eqref{errorall6} to obtain
		\begin{align*}
			&\frac{\mu}{2}  \left\|\varepsilon(e_{\boldsymbol{u}}^{h,N+1})\right\|_{L^2(\Omega)}^2   + \sum_{j=1}^A \frac{c_j}{2}  \left\| e_{p_j}^{h,N+1} \right\|_{L^2(\Omega)}^2 +\frac{L}{2}    \left\| \boldsymbol{\alpha} \! \cdot \! \boldsymbol{e}_p^{h,N+1}\right\|_{L^2(\Omega)}^2\\
			& +\frac{1}{4\lambda}     \left\| \boldsymbol{\alpha}  \! \cdot \! \boldsymbol{e}_p^{h,N+1}-e_{\xi}^{h,N+1}\right\|_{L^2(\Omega)}^2    +  \Delta t  \sum_{n=1}^N \sum_{j=1}^A \kappa_j \left\|  \nabla e_{p_j}^{h,n+1} \right\|_{L^2(\Omega)}^2 \\
			&+\frac{\Delta t}{2} \sum_{n=1}^N \sum_{i,j=1}^A \|  s_{j\leftarrow i}^{1/2} \left( e^{h,n+1}_{p_j}-e^{h,n+1}_{p_i} \right) \|_{L^2(\Omega)}^2\\ 
			\leq 
			& {C}\left[(\Delta t)^2 \int_0^T\left(\left\|\partial_{t t} \boldsymbol{u}\right\|_{H^1(\Omega)}^2+\left\|\partial_{t t} \xi\right\|_{L^2(\Omega)}^2+\left\|\partial_{t t} \boldsymbol{p} \right\|_{L^2(\Omega)}^2\right) d s\right. \\
			& +h^{2 k} \int_0^T\left(\left\|\partial_t \boldsymbol{u}\right\|_{H^{k+1}(\Omega)}^2+\left\|\partial_t \xi\right\|_{H^k(\Omega)}^2\right) d s  \left.+h^{2 l+2}  \int_0^T\left\|\partial_t \boldsymbol{p} \right\|_{H^{l+1}(\Omega)}^2 d s\right].
		\end{align*}
	Thus, we can obtain the desired result \eqref{th3eq} by \eqref{boundp} and \eqref{boundxi}. This completes the proof.
\end{proof}

\section{Numerical experiments}\label{experience}
This section is divided into two parts to evaluate the effectiveness of the proposed algorithms: convergence tests and a benchmark study. In the convergence tests, we examine the convergence order of the stabilized parallel scheme under various parameter settings. In particular, we verify the locking-free property of the scheme as \(\lambda \to \infty\) and analyze its convergence behavior when the fluid storage coefficient \(c_j \to 0\) and the diffusion coefficient \(\kappa_j \to 0\).
In the benchmark study, we test the proposed algorithm in a brain simulation involving a four-pressure network \cite{lee_mixed_2019}. All algorithms are implemented using FreeFEM++ \cite{freefem}.

\subsection{Tests for convergence}
 Let the computational domain be $\Omega=[0,1]^2$. We choose the body force $\boldsymbol{f}$ and the volumetric source/sink term $q_j$ in \eqref{govern1} so that the exact solution is as follows,
\begin{equation*}
	\begin{aligned}
		&	\boldsymbol{u}(x,y,t)= \left(
		\begin{array}{lr}
			e^{-t}\left(\sin\left(2\pi y\right) \left(-1+\cos(2\pi x)\right)+\dfrac{1}{\mu+\lambda}\sin(\pi x)\sin(\pi y)\right)  \\
			e^{-t}\left(\sin\left(2\pi x\right) \left(1-\cos(2\pi y)\right)+\dfrac{1}{\mu+\lambda}\sin(\pi x)\sin(\pi y)\right)  
		\end{array}
		\right), \\
		&	p_j(x,y,t)=e^{-jt}\sin(\pi x) \sin(\pi y), \quad j=1,2.
	\end{aligned}
\end{equation*}
The following parameters are adopted for testing convergence rates with respect to time step refinement,
$$E = 1, \quad \nu = 0.4, \quad c_j = 10^{-7}, \quad \alpha_j = 1.0, \quad \kappa_j = 10^{-7}, \quad s_{j \leftarrow i} = 0.1,$$ 
where \(i, j = 1, 2\). 

We fix the mesh size \(h = \frac{1}{64}\), \(k = 3\), and \(l = 2\) for spatial discretization and refine the time step size \(\Delta t\). We calculate the convergence rates of the \(L^2\) and \(H^1\) semi-norms for displacement and pressure at time \(T = 0.5\). The errors and convergence rates in time are summarized in Table \ref{table1}. The results show that the \(L^2\) errors for \(\boldsymbol{u}\) and \(\nabla \boldsymbol{u}\), as well as the \(L^2\) errors for \(p\) and \(\nabla p\), \(\xi\) and \(\nabla \xi\), all achieve an order of approximately 1 as \(\Delta t\) is refined, aligning with the theoretical analysis.

\begin{table}[ht]
	\centering
	\small
	\caption{Time refinement convergence rates for Example 1.}
	\label{table1}
    	\resizebox{0.9\textwidth}{!}{%
	\begin{tabular}{ccccccc}
		\hline
		$\Delta t$ &$\|\boldsymbol{u}-\boldsymbol{u}_h \|_{L^2(\Omega)}$  & rate &   $\| \xi-\xi_h \|_{L^2(\Omega)}$ & rate &   $\| \boldsymbol{p}-\boldsymbol{p}_h \|_{L^2(\Omega)}$ & rate   \\
		\hline
		1/8 &       5.029e-03   &         &   1.621e-02       &     &   6.787e-03   &                 \\
		1/16  &2.208e-03 &  1.19&   7.182e-03 &  1.17 &  3.891e-03  & 0.80   \\
		1/32 &1.069e-03  & 1.05 &  3.485e-03  & 1.04 &  1.976e-03  & 0.98  \\
		1/64  &5.280e-04  & 1.02  & 1.723e-03  & 1.02  & 9.910e-04   &1.00  \\	
		\hline
		$\Delta t$ &$\|\nabla \left( \boldsymbol{u}-\boldsymbol{u}_h \right) \|_{L^2(\Omega)}$  & rate &   $\| \nabla ( \xi-  \xi_h) \|_{L^2(\Omega)}$ & rate &   $\| \nabla ( \boldsymbol{p}- \boldsymbol{p}_h) \|_{L^2(\Omega)}$ & rate   \\
		\hline
		1/8 &       2.393e-02   &         &   1.218e-01       &     &   2.840e-02   &                 \\
		1/16  &1.051e-02 &  1.19&   5.324e-02 &  1.19 &  1.524e-02  & 0.90   \\
		1/32 &5.088e-03  & 1.05 &  2.576e-02  & 1.05 &  7.666e-03  & 0.99  \\
		1/64  &2.514e-03  & 1.02  & 1.273e-02  & 1.02  & 3.832e-03   &1.00  \\	
		\hline
	\end{tabular}
    }
\end{table}

\begin{table}[ht]
	\centering
	\small
	\caption{Numerical results of first case \eqref{case1} with $k=2,l=1$.}
	\label{commytable1}
	\resizebox{1\textwidth}{!}{%
	\begin{tabular}{cccccccc}
		\hline
		$h$ & $\Delta t$ &$\|\boldsymbol{u}-\boldsymbol{u}_h \|_{L^2(\Omega)}$  & rate &   $\| \boldsymbol{p}-\boldsymbol{p}_h \|_{L^2(\Omega)}$ & rate & $\| \xi-\xi_h \|_{L^2(\Omega)}$  & rate \\
		\hline
		1/4 &$1/8$ &        3.543e-02 &    &     3.510e-02    &   &      7.649e-02 &\\
		1/8 & $1/32$ & 4.192e-03  &    3.08     &  1.056e-02    &    1.73     &   1.638e-02  &    2.22       \\
		1/16 & $1/128$ & 5.634e-04    &   2.90    &   2.649e-03    &    1.99    &    3.922e-03  &     2.06  \\
		1/32 &  $1/512$ & 1.016e-04  &     2.47    &   6.603e-04    &    2.00    &    9.725e-04&    2.01         \\  
		\hline
		$h$ & $\Delta t$ &$\| \nabla \left( \boldsymbol{u}-\boldsymbol{u}_h \right) \|_{L^2(\Omega)}$  & rate &   $\| \nabla \left( \boldsymbol{p}-\boldsymbol{p}_h \right) \|_{L^2(\Omega)}$ & rate & $\| \nabla \left( \xi-\xi_h \right) \|_{L^2(\Omega)}$  & rate \\
		\hline
		1/4 &$1/8$ &        8.701e-01 &    &    5.037e-01   &   &      2.031e+00 &\\
		1/8 & $1/32$ & 2.394e-01  &    1.86     & 2.606e-01    &   0.95     &   9.648e-01 &    1.07       \\
		1/16 & $1/128$ & 6.164e-02    &   1.96    &  1.318e-01    &    0.98   &    4.762e-01 &     1.02  \\
		1/32 &  $1/512$ &1.554e-02 &     1.99   &   6.607e-02   &    1.00    &    2.373e-01 &    1.00         \\  
		\hline
	\end{tabular}
}
\end{table}

\begin{table}[ht]
	\centering
	\small
	\caption{Numerical results of first case \eqref{case1} with $k=3,l=2$.}
	\label{commytable3}
	\resizebox{1\textwidth}{!}{%
	\begin{tabular}{cccccccc}
		\hline
		$h$ & $\Delta t$ &$\|\boldsymbol{u}-\boldsymbol{u}_h \|_{L^2(\Omega)}$  & rate &   $\| \boldsymbol{p}-\boldsymbol{p}_h \|_{L^2(\Omega)}$ & rate & $\| \xi-\xi_h \|_{L^2(\Omega)}$  & rate \\
		\hline
		1/4 &$1/8$ &       7.445e-03 &    &     8.339e-03   &   &      1.488e-02 &\\
		1/8 & $1/64$ & 4.680e-04 &    3.99     &   1.404e-03    &    2.57     &   1.462e-03  &    3.35       \\
		1/16 & $1/512$ & 3.910e-05   &   3.58    &   1.716e-04   &    3.03    &    1.805e-04 &     3.02  \\
		1/32 &  $1/4096$ & 4.192e-06  &     3.22    &   2.136e-05    &    3.01    &    2.255e-05&    3.00         \\  
		\hline
		$h$ & $\Delta t$ &$\| \nabla \left( \boldsymbol{u}-\boldsymbol{u}_h \right) \|_{L^2(\Omega)}$  & rate &   $\| \nabla \left( \boldsymbol{p}-\boldsymbol{p}_h \right) \|_{L^2(\Omega)}$ & rate & $\| \nabla \left( \xi-\xi_h \right) \|_{L^2(\Omega)}$  & rate \\
		\hline
		1/4 &$1/8$ &        1.333e-01 &    &    8.340e-02   &   &      3.777e-01&\\
		1/8 & $1/64$ & 1.625e-02  &    3.04     & 2.047e-02   &   2.03     &   8.938e-02 &    2.08       \\
		1/16 & $1/512$ & 1.991e-03   &   3.03    &  5.090e-03  &    2.01   &    2.081e-02 &     2.10  \\
		1/32 &  $1/4096$ &2.467e-04 &     3.01   &   1.275e-03  &    2.00    &    5.001e-03&    2.06        \\  
		\hline
	\end{tabular}
}
\end{table}

\begin{table}[ht]
	\centering
	\small
	\caption{Numerical results of second case \eqref{case2} with $k=2,l=1$.}
	\label{commytable5}
	\resizebox{1\textwidth}{!}{%
	\begin{tabular}{cccccccc}
		\hline
		$h$ & $\Delta t$ &$\|\boldsymbol{u}-\boldsymbol{u}_h \|_{L^2(\Omega)}$  & rate &   $\| \boldsymbol{p}-\boldsymbol{p}_h \|_{L^2(\Omega)}$ & rate & $\| \xi-\xi_h \|_{L^2(\Omega)}$  & rate \\
		\hline
		1/4 &$1/8$ &       3.491e-02 &    &     3.714e-01    &   &      1.405e-01 &\\
		1/8 & $1/32$ &  4.085e-03  &    3.10     &  1.122e-01   &    1.73     &   2.623e-02  &    2.42       \\
		1/16 & $1/128$ & 4.736e-04    &   3.11    &   2.996e-02    &    1.90    &    6.027e-03   &     2.12  \\
		1/32 &  $1/512$ & 5.739e-05  &     3.04    &    7.681e-03   &   1.96    &    1.480e-03&    2.03         \\  
		\hline
		$h$ & $\Delta t$ &$\| \nabla \left( \boldsymbol{u}-\boldsymbol{u}_h \right) \|_{L^2(\Omega)}$  & rate &   $\| \nabla \left( \boldsymbol{p}-\boldsymbol{p}_h \right) \|_{L^2(\Omega)}$ & rate & $\| \nabla \left( \xi-\xi_h \right) \|_{L^2(\Omega)}$  & rate \\
		\hline
		1/4 &$1/8$ &       8.721e-01&    &    2.227e+00   &   &  3.532e+00   &\\
		1/8 & $1/32$ & 2.381e-01&    1.87     &  6.725e-01 &   1.73    &    1.585e+00 &    1.16       \\
		1/16 & $1/128$ & 6.120e-02   &   1.96    &2.099e-01&     1.68 &    7.754e-01 &     1.03  \\
		1/32 &  $1/512$ &1.542e-02  &     1.99   &    7.817e-02 &    1.43      &    3.857e-01 &    1.01         \\  
		\hline
	\end{tabular}
}
\end{table}

\begin{table}[ht]
	\centering
	\small
	\caption{Numerical results of second case \eqref{case2} with $k=3,l=2$.}
	\label{commytable7}
	\resizebox{1\textwidth}{!}{%
	\begin{tabular}{cccccccc}
		\hline
		$h$ & $\Delta t$ &$\|\boldsymbol{u}-\boldsymbol{u}_h \|_{L^2(\Omega)}$  & rate &   $\| \boldsymbol{p}-\boldsymbol{p}_h \|_{L^2(\Omega)}$ & rate & $\| \xi-\xi_h \|_{L^2(\Omega)}$  & rate \\
		\hline
		1/4 &$1/8$ &      6.664e-03 &    &     5.258e-02    &   &      2.162e-02 &\\
		1/8 & $1/32$ &  3.986e-04 &    4.06     &  7.812e-03  &    2.75     &   2.367e-03  &    3.19       \\
		1/16 & $1/128$ &2.340e-05   &   4.09    &   1.025e-03  &    2.93    &    2.601e-04   &     3.18  \\
		1/32 &  $1/512$ &1.425e-06  &     4.04    &    1.308e-04   &   2.97    &    3.021e-05&    3.11         \\  
		\hline
		$h$ & $\Delta t$ &$\| \nabla \left( \boldsymbol{u}-\boldsymbol{u}_h \right) \|_{L^2(\Omega)}$  & rate &   $\| \nabla \left( \boldsymbol{p}-\boldsymbol{p}_h \right) \|_{L^2(\Omega)}$ & rate & $\| \nabla \left( \xi-\xi_h \right) \|_{L^2(\Omega)}$  & rate \\
		\hline
		1/4 &$1/8$ &       1.339e-01&    &    3.194e-01  &   &  7.812e-01   &\\
		1/8 & $1/32$ & 1.635e-02&    3.03     &  4.986e-02&   2.68    &   1.768e-01 &    2.14       \\
		1/16 & $1/128$ & 2.002e-03  &   3.03    &7.835e-03&     2.67 &   3.845e-02 &     2.20  \\
		1/32 &  $1/512$ &2.480e-04 &     3.01   &   1.484e-03  &    2.40      &    8.754e-03&    2.14         \\  
		\hline
	\end{tabular}
}
\end{table}


Next, we proceed to evaluate the spatial convergence rates for the \(L^2\) and energy norms of displacement and pressure at time \(T = 0.5\). To verify the results, we consider two distinct parameter sets. For the first case, the parameters are chosen as follows,
\begin{equation}\label{case1}
	E=1,\quad \nu=0.3, \quad c_j=1, \quad \alpha_j=1.0, \quad \kappa_j=1,\quad s_{j\leftarrow i}=0.01,
\end{equation}
where $i,j=1,2.$ For the second case, we consider the more general choice of parameters, i.e
\begin{equation}\label{case2}
	E=1,\quad \nu=0.499999999, \quad c_j=10^{-7},\quad  \kappa =10^{-6} , \quad \alpha_j=1,  \quad s_{j\leftarrow i}=0.01,
\end{equation}
where $i,j=1,2.$ In this case, the Lam\'{e} constant $\lambda$ is $1.6667\times 10^{8}$.

For the above two cases, the results obtained by our parallel scheme are listed in Tables \ref{commytable1}-\ref{commytable7}. In Table \ref{commytable1} and Table \ref{commytable5}, the finite element discrete space is set to \(k = 2\) and \(l = 1\). The results in these tables indicate that the \(H^1\) error rate for displacement \(\boldsymbol{u}\) is 2, while the \(L^2\) error rates for \(\xi\) and pressure \( \boldsymbol{p} \) are 2, and the \(H^1\) error rate for \( \boldsymbol{p} \) is 1.
Meanwhile, in Tables \ref{commytable3} and Table \ref{commytable7}, the discrete finite element space is set to \(k = 3\) and \(l = 2\). The results show that the \(H^1\) error rate for displacement \(\boldsymbol{u}\) is 3, the \(L^2\) error rates for \(\xi\) and pressure \( \boldsymbol{p} \) are also 3, and the \(H^1\) error rate for \( \boldsymbol{p} \) is 2. Both error convergence rates are consistent with our theoretical analysis.
Furthermore, we observed that for the incompressible case with \(\nu = 0.499999999\), the convergence rate of \(\boldsymbol{u}\) in the \(L^2\) norm is optimal. For general values of \(\nu=0.3\), the convergence rate of \(\boldsymbol{u}\) in the \(L^2\)-norm decreases in order \cite{lee_mixed_2019}. However, the \(H^1\) seminorm for \(\boldsymbol{u}\) remains optimal, consistent with our theoretical analysis.

The numerical tests demonstrate that the proposed algorithms are unconditionally stable and locking-free. More importantly, the parallel splitting algorithm allows simultaneous solving of both sub-problems. To highlight its advantages, we compared the CPU times of three methods with parameter setting \eqref{case1}: the fully coupled, sequential, and parallel methods. Both fully coupled and sequential methods use a first-order backward difference scheme in time, with the sequential method solving the Stokes system first and then substituting \(\xi\) into the elliptic system to compute pressure \cite{cai23jsc}. Table \ref{commytabletime} shows that the parallel scheme significantly reduces the computation time while maintaining errors comparable to the fully coupled method for the same discretization size. In contrast, iterative decoupled methods involve more complex computations at each time level, leading to longer computation times than the fully coupled method. The iterative algorithm remains highly efficient with a proper choice of stabilization parameters \cite{Storvik2019}.

\begin{table}[ht]
	\centering
	\small
	\caption{Comparison of CPU time at $T=1$, with $k=2,l=1$.		\label{commytabletime}}
	\resizebox{1\textwidth}{!}{%
	\begin{tabular}{lccccc}
		\hline
		&$h$ & $\Delta t$ &$\| \boldsymbol{u}- \boldsymbol{u}_h \|_{L^2} \&   \| \boldsymbol{p}-\boldsymbol{p}_h \|_{L^2}$  & CPU time(s)& Time Reduction \\
		\hline
		Fully Coupled Scheme&      $1/40$ &$10^{-2}$  &      $4.07 \times 10^{-4}$  \& $8.20\times 10^{-4}$         & 13.9  &  -  \\
		Sequtial Scheme&       $1/40$ &$10^{-2}$&     $4.05\times10^{-4}$ \& $8.51\times 10^{-4}$    &  8.0 & 42.5\%   \\
		Parallel Scheme&          $1/40$ &$10^{-2}$ &      $2.95\times 10^{-4}$    \& $2.33\times 10^{-3}$      &  5.6 & 59.7\%   \\
		\hline
		Fully Coupled Scheme&      1/80 &$10^{-4}$ &      $3.46\times10^{-5}$  \&     $1.15\times10^{-4}$      &   559&    \\
		Sequtial Scheme&      1/80 &$10^{-4}$ &     $3.46\times10^{-5}$  \&     $1.16\times10^{-4}$     &  366 &34.5\%    \\
		Parallel Scheme&      1/80 &$10^{-4}$  &    $3.20\times10^{-5}$  \&     $3.43\times10^{-4}$         & 210  &   62.4\%  \\
		\hline
	\end{tabular}
}
\end{table}

\subsection{Applications in brain simulation using 4-network model}
In this subsection, following \cite{lee_mixed_2019}, we employ the 4-network poroelastic model to simulate the deformation of human brain tissue using physiologically inspired parameters and boundary conditions. Specifically, we solve the MPET equations for \(A = 4\), where the four networks represent \cite{meptori}: (1) extracellular spaces filled with interstitial fluid (or a paravascular network), (2) arteries, (3) veins, and (4) capillaries. The computational domain and grid are taken from \cite{cai21}, as illustrated in \cref{brainmesh}, and the parameter values are listed in Table \ref{pv2}.
	\begin{figure}[htbp]
	\centering
\subfigure{\includegraphics[width=0.5\textwidth]{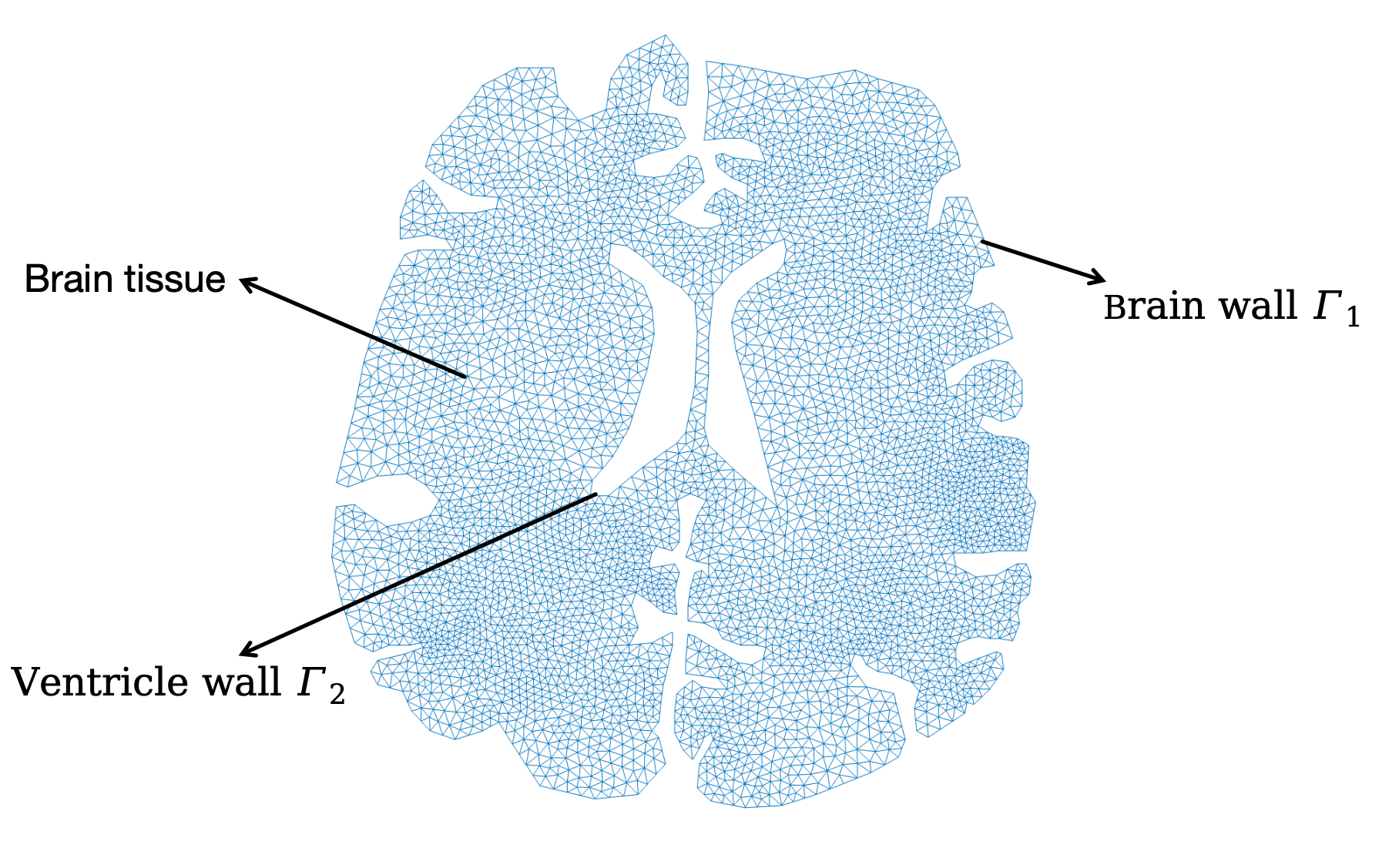}}
	\caption{The mesh for brain.}
	\label{brainmesh}
\end{figure}

\begin{table}[ht]
	\centering
    \small
	\caption{Parameters values, referenced from \cite{lee_mixed_2019}.			\label{pv2}}
    	\resizebox{0.9\textwidth}{!}{%
	\begin{tabular}{llll}
		\hline Parameters & Values &Parameters & Values \\
		\hline$\nu$ & 0.4999 &	$\alpha_2, \alpha_4$ & 0.25 \\
		$E$ & 1500 Pa &	$\alpha_3$ & 0.01  \\
		$c_1$ & $3.9 \times 10^{-4}$  $\mathrm{~Pa}^{-1}$ &	$\kappa_1$ & $1.57 \times 10^{-5}$  $\mathrm{~mm}^2 \mathrm{~Pa}^{-1} \mathrm{~s}^{-1}$   \\
		$c_2, c_4$ & $2.9 \times 10^{-4}$ $\mathrm{~Pa}^{-1}$ & 	$\kappa_2, \kappa_3, \kappa_4$, & $3.75 \times 10^{-2}$  $\mathrm{~mm}^2 \mathrm{~Pa}^{-1} \mathrm{~s}^{-1}$  \\
		$c_3$ & $1.5 \times 10^{-5}$  $\mathrm{~Pa}^{-1}$ & 	$\xi_{2 \leftarrow 4}, \xi_{4 \leftarrow 3}, \xi_{4 \leftarrow 1}, \xi_{1 \leftarrow 3}$ & $1.0 \times 10^{-6}$  $\mathrm{~Pa}^{-1} \mathrm{~s}^{-1}$ \\
		$\alpha_1$ & 0.49 &	$\xi_{1 \leftarrow 2}, \xi_{2 \leftarrow 3}$ & 0 \\
		\hline
	\end{tabular}
    }
\end{table}
\begin{figure}[htbp]
	\centering
	\subfigure[{Results of $p_1$.}]{\includegraphics[width=0.24\textwidth]{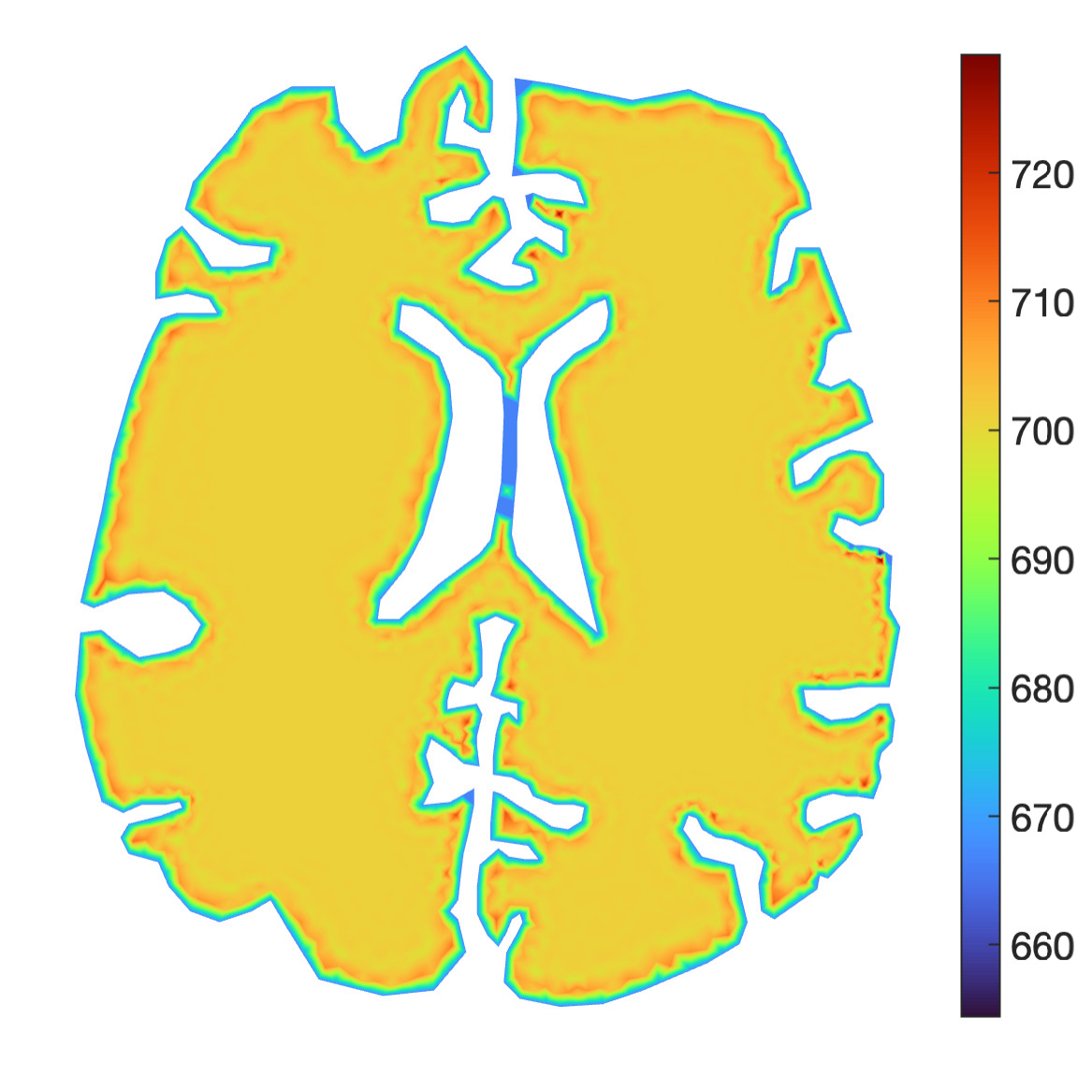}}
	\subfigure[Results of $p_2$.]{\includegraphics[width=0.24\textwidth]{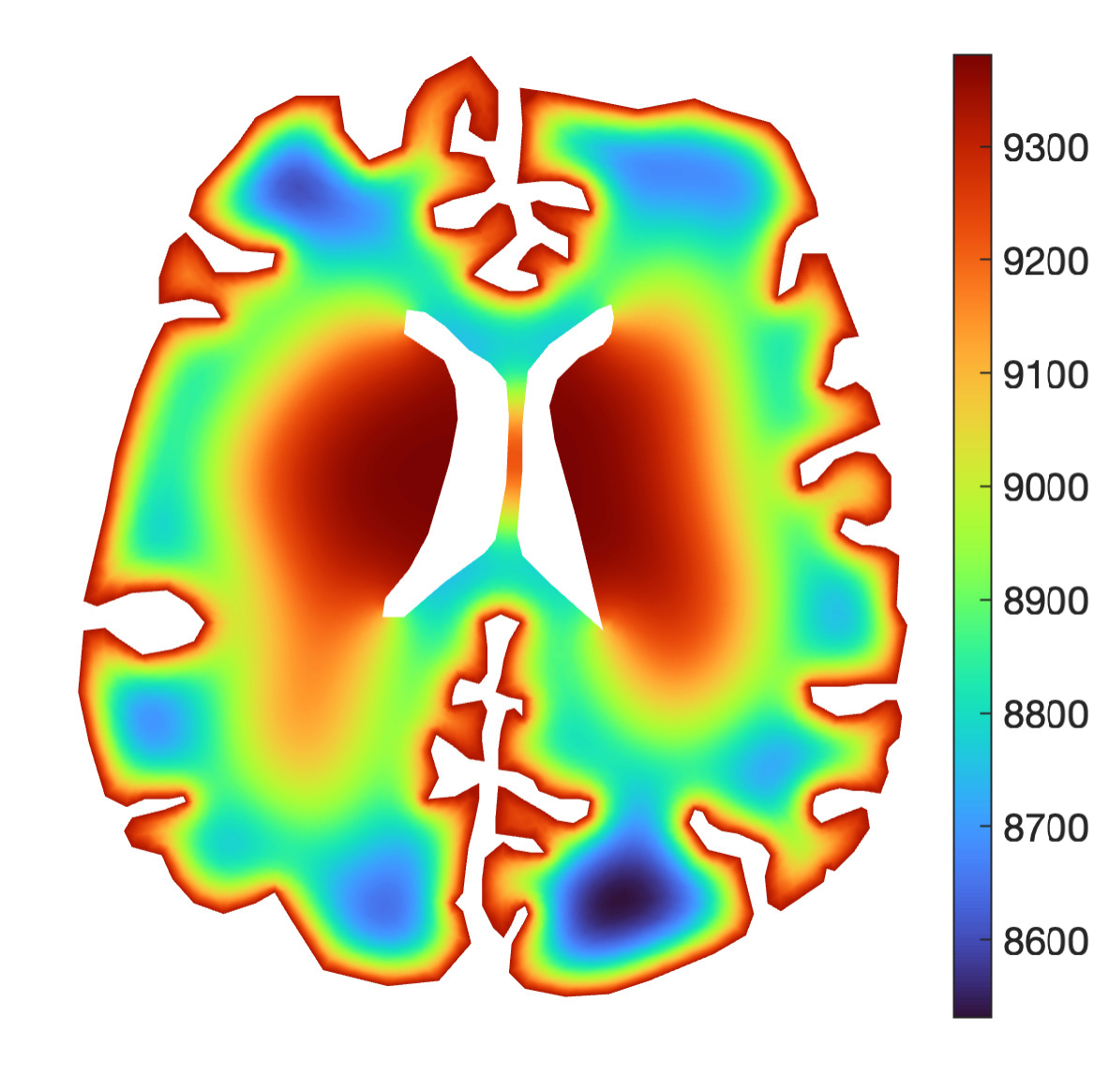}}
	\subfigure[Results of $p_3$.]{\includegraphics[width=0.24\textwidth]{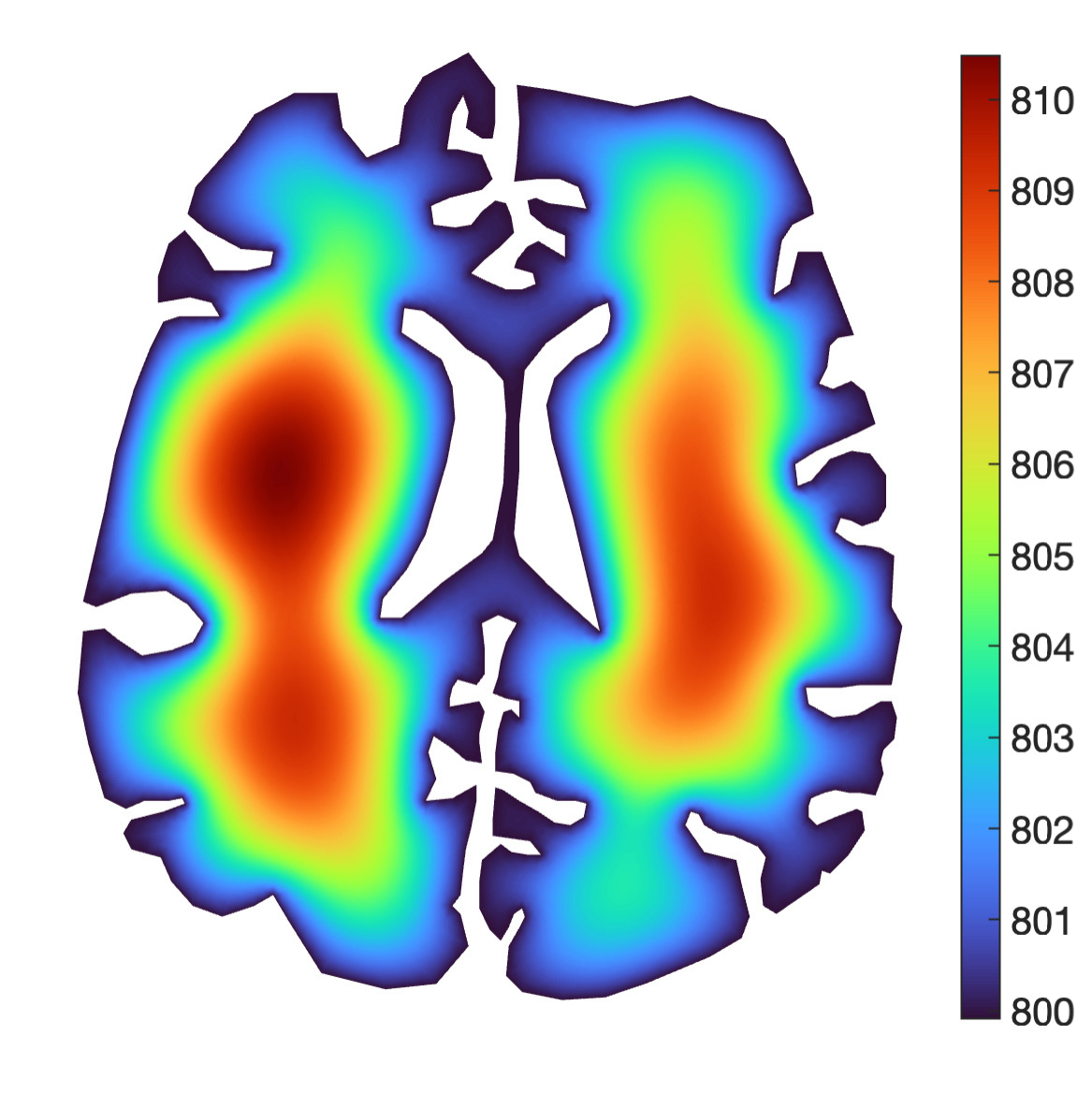}}
	\subfigure[Results of $p_4$.]{\includegraphics[width=0.24\textwidth]{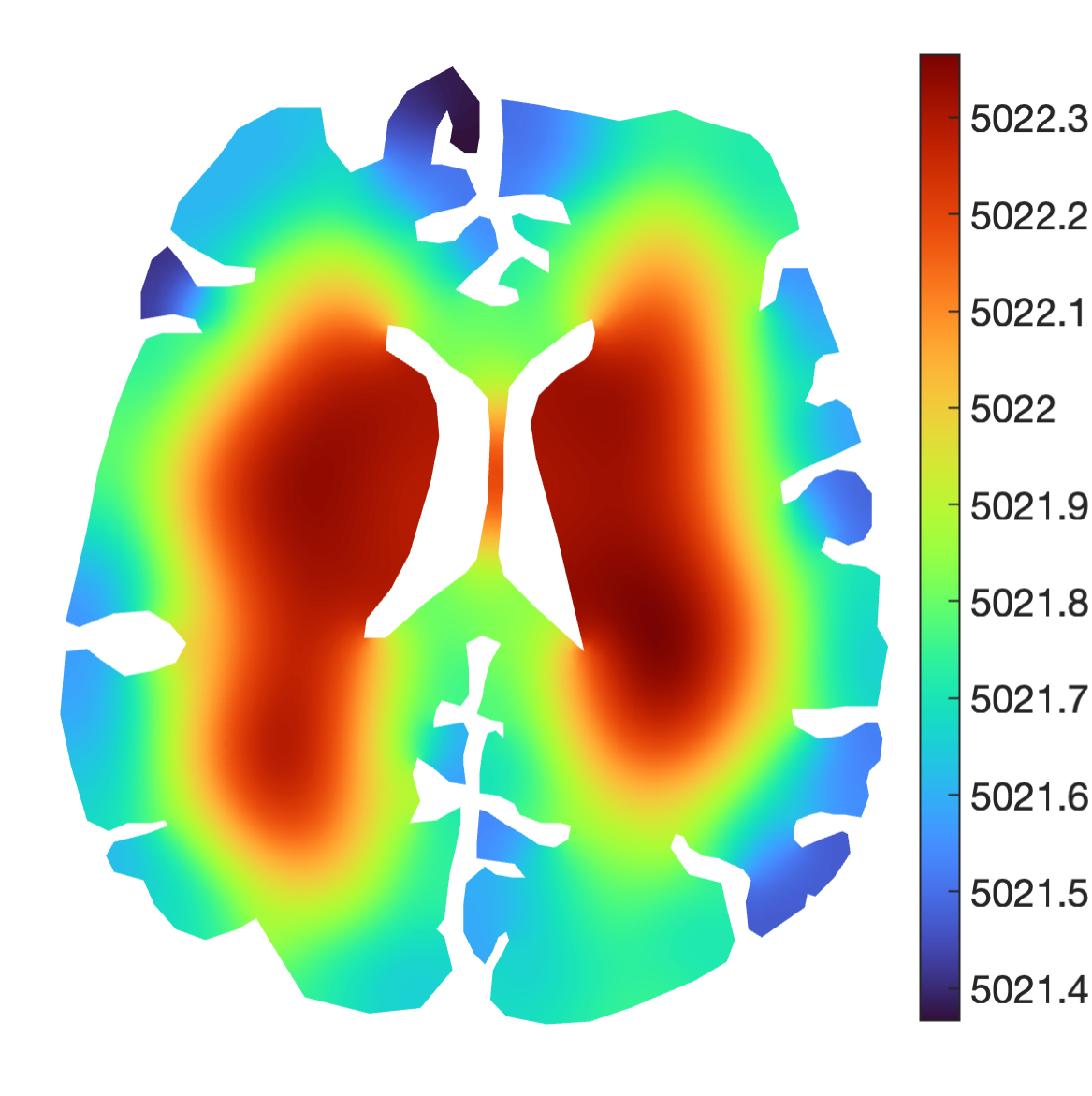}}
	
	\vspace{0.5cm} 
	\subfigure[Results of $p_1$.]{\includegraphics[width=0.24\textwidth]{couplep1forT=3.eps}}
	\subfigure[Results of $p_2$ .]{\includegraphics[width=0.24\textwidth]{couplep2forT=3.eps}}
	\subfigure[Results of $p_3$.]{\includegraphics[width=0.24\textwidth]{couplep3forT=3.eps}}
	\subfigure[Results of $p_4$.]{\includegraphics[width=0.24\textwidth]{couplep4forT=3.eps}}	
	\caption{Pressure distribution of the numerical solution for the 4-network model (first row: fully coupled scheme; second row: parallel splitting scheme).}
	\label{fig:multiple_images}
\end{figure}

Reference from \cite{lee_mixed_2019}, we consider the boundary conditions for this system for all $t \in [0, T]$. The displacement is fixed on the skull boundary and prescribe total stress on the ventricular wall,
\begin{align*}
	\boldsymbol{u}=\mathbf{0} \quad \text { on } \Gamma_1, \quad 
	(\boldsymbol{\sigma}- \sum_{j=1}^A \alpha_j p_j \boldsymbol{I}) \cdot \boldsymbol{n}=- \sum_{j=1}^A \alpha_j p_j  \boldsymbol{n} \quad \text { on } \Gamma_2 .
\end{align*}
All boundary pressure values are given in mmHg below; with 1 mmHg         $\approx $ 133.32 Pa. The boundary conditions of fluid in network 1 are
$$
p_1=5+2sin(2\pi t) \quad \text { on } \Gamma_1, \quad 
p_1=5+(2+\delta)sin(2\pi t) \quad \text { on } \Gamma_2 ,
$$
where $\delta =0.012$ is the transmantle pressure difference.
Consider that a pulsating arterial blood pressure is imposed at the outer boundary, while the inner boundary is subjected to zero arterial flow,
$$p_2=70+10 \sin (2 \pi t) \quad \text{on } \Gamma_1, \quad \boldsymbol{K}_2 \nabla p_2 \cdot \boldsymbol{n}=0 \quad \text{on } \Gamma_2.$$
A constant pressure is prescribed at both boundaries of the fluid in network 3,
$$p_3=6 \quad \text{on } \Gamma_1 \text{ and } \Gamma_2.$$
Finally, no flux at both boundaries of the network 4 (capillary compartment),
$$\boldsymbol{K}_4 \nabla p_4 \cdot \boldsymbol{n}=0 \quad \text{on } \Gamma_1 \text{ and } \Gamma_2.$$
The initial conditions at $t=0$ are specified as follows:
$$ \boldsymbol{u}=0,\quad p_1=5,\quad p_2= 70,\quad p_3=6, \quad p_4=38.$$
The numerical results are computed with a time step of \( \Delta t = 0.0125 \) for \( T = 3 \). We compare the results obtained by the parallel splitting scheme with those obtained by the fully coupled scheme, which uses the backward difference method for time discretization.
The results in \cref{fig:multiple_images} show the pressure distribution in a four-network model computed using two schemes. The first row depicts the pressure distribution from the fully coupled scheme, while the second row presents results from the parallel splitting scheme. The consistency in pressure profiles demonstrates the effectiveness of the proposed method.


\section{Conclusions}
\label{con}
This paper presents a novel algorithm that splits the MPET system into two independent subsystems, allowing for parallel computation at each time step. The approach significantly reduces computational complexity and improves efficiency. We provide an optimal convergence analysis of the parallel splitting algorithm. The algorithm is unconditionally stable, meaning that it imposes no restrictions on time-step size or model parameters after split decoupling. Numerical examples confirm the effectiveness of the algorithm, demonstrating that it is locking-free. Finally, we apply the parallel algorithm to simulate brain behavior, comparing the results with those of the fully coupled scheme, further validating its effectiveness and efficiency. {One limitation of the present scheme is the lack of local mass conservation, as standard Lagrangian elements are used for pressure approximation. This aspect could be improved by adopting mixed finite element methods or flux-reconstruction techniques in future studies.}

\section*{Acknowledgments}
We would like to express appreciation to the following financial support: National Key Research and Development Project of China (No. 2023YFA1011701).
The work of J. Zhao was supported by the China Postdoctoral Science Foundation under Grant Number GZB20240552 and 2024M762396.
The work of H. Chen was supported by the National Key Research and Development Project of China (Grant No. 2023YFA1011702) and the National Natural Science Foundation of China (Grant No. 12471345, 12122115). 
The work of M. Cai was supported in part by the NIH-RCMI grant through U54MD013376, and the affiliated project award from the Center for Equitable Artificial Intelligence and Machine
Learning Systems (CEAMLS) at Morgan State University (project ID 02232301).


\end{document}